\newtheorem{theorem}{Theorem}[section]
\newtheorem{lemma}{Lemma}
\newtheorem{assumption}{Assumption}
\theoremstyle{definition}
\theoremstyle{remark}
\newtheorem{remark}{Remark}
\numberwithin{equation}{section}
\crefname{theorem}{Theorem}{Theorems}
\crefname{lemma}{Lemma}{Lemmas}
\crefname{figure}{Fig.}{Figs.}
\crefname{example}{Example}{Examples}
\crefname{remark}{Remark}{Remarks}
\crefname{table}{Table}{Tables}
\crefname{assumption}{Assumption}{Assumptions}
\crefname{section}{Section}{Sections}
\begin{document}

\title[Longer time simulation of the unsteady NSEs]{Longer time simulation of the unsteady Navier-Stokes equations based on a modified convective formulation}
\author{Xu Li}
\address{School of Mathematics, Shandong University, Jinan 250100, China}
\email{xulisdu@126.com}
\thanks{The second author is the corresponding author.}

\author{Hongxing Rui}
\address{School of Mathematics, Shandong University, Jinan 250100, China}
\email{hxrui@sdu.edu.cn}
\thanks{This work was supported by the National Natural Science Foundation of China grant 12131014.}

\subjclass[2020]{65M12, 65M15, 65M60, 76D05, 76D17}



\keywords{Finite element methods, Navier-Stokes equations, modified convective formulation, divergence-free reconstruction, energy-conserving schemes}

\begin{abstract}
For the discretization of the convective term in the Navier-Stokes equations (NSEs), the commonly used convective formulation (CONV) does not preserve the energy if the divergence constraint is only weakly enforced. In this paper, we apply the skew-symmetrization technique in [B. Cockburn, G. Kanschat and D. Sch\"{o}tzau, Math. Comp., 74 (2005), pp. 1067-1095] to conforming finite element methods, which restores energy conservation for CONV. The crucial idea is to replace the discrete advective velocity with its a $H(\operatorname{div})$-conforming divergence-free approximation in CONV.
We prove that the modified convective formulation also conserves linear momentum, helicity, 2D enstrophy and total vorticity under some appropriate senses. Its a Picard-type linearization form also conserves them.
Under the assumption $\boldsymbol{u}\in L^{2}(0,T;\boldsymbol{W}^{1,\infty}(\Omega)),$ it can be shown that the Gronwall constant does not explicitly depend on the Reynolds number in the error estimates. The long time numerical simulations show that the linearized and modified convective formulation has a similar performance with the EMAC formulation and outperforms the usual skew-symmetric formulation (SKEW).
\end{abstract}

\maketitle

\section{Introduction}
This paper is concerned with the finite element discretizations of the unsteady Navier-Stokes equations
\begin{subequations}\label{NSE}
\begin{align}
\boldsymbol{u}_{t}-\nu\Delta\boldsymbol{u}+(\boldsymbol{u}\cdot\nabla)\boldsymbol{u}+\nabla p&=\boldsymbol{f}&\text{in} ~(0,T]\times\Omega,\label{NSE1}\\
\nabla\cdot\boldsymbol{u}&=0 &\text{in} ~(0,T]\times\Omega,\label{NSE2}\\
\boldsymbol{u}(0)&=\boldsymbol{u}^{0} &\text{in} ~\Omega,\label{NSE3}\\
\boldsymbol{u}&=\boldsymbol{0}&\text{on} ~(0,T]\times\Gamma,\label{NSE4}
\end{align}
\end{subequations}
where $\Omega\subset \mathbb{R}^{d}$ ($d=2,3$) is a bounded domain with Lipschitz and polyhedral boundary $\Gamma$; the velocity $\boldsymbol{u}$ and pressure $p$ are two unknowns; $\nu>0$ is the constant kinematic viscosity; $\boldsymbol{f}=\boldsymbol{f}(\boldsymbol{x})$ represents the external body force at $\boldsymbol{x}\in\Omega$; and $\boldsymbol{u}^{0}$ denotes the initial velocity.

Physically, there are several balance laws implied in the Navier-Stokes equations such as the balances of kinetic energy, linear momentum and angular momentum (EMA) \cite{evans_isogeometric_2013,Rebholz2017}. In some cases (e.g., vanishing external force and viscosity), these quantities are conservative under some appropriate assumptions \cite{Rebholz2017}.
Since at least Arakawa devised an energy and enstrophy conservation method for two-dimensional NSEs in \cite{Arakawa1966}, scientists have recognised that these conservation (balance) laws are important references for unlocking efficient and stable numerical methods for NSEs \cite{Fix1975,Abramov2003,Rebholz2007,Palha2017,evans_isogeometric_2013,Rebholz2020}.
A violation of these laws may bring unexpected instability.

In this paper, we consider inf-sup stable mixed methods for NSEs. The crucial points for these balance laws are the discretization of the nonlinear term and the finite elements one uses. The most common formulation for this term might be the so-called convective formulation (CONV). However, it was shown in \cite{Rebholz2017} that this formulation was not EMA-conserving unless exactly divergence-free elements were used. We note that most classical elements are not divergence-free \cite{john_divergence_2017}. Also in \cite{Rebholz2017}, Charnyi {\it et al}. proposed an EMA-conserving formulation ("EMAC") for the common elements. The numerical experiments in  \cite{Rebholz2017,Lehmkuhl20182,Lehmkuhl2019,Lehmkuhl2020} demonstrated that this formulation was always among the best discretizations of the nonlinear term. In addition, the EMAC formulation was also shown to preserve the 2D enstrophy, helicity and total vorticity under certain appropriate assumptions. In the paper \cite{Rebholz2020}, Olshanskii and Rebholz proved that the Gronwall constant in the EMAC error estimates does not explicitly depend on the Reynolds number, which is contrast to other formulations such as SKEW. We note that, although the EMAC formulation has many fascinating properties, it is not easy to find a fully satisfactory linearization scheme for it. In \cite{EMAC2019}, Charnyi {\it et al.} proposed two linearization schemes for this method: a skew-symmetric linearization and the Newton linearization. The former did not conserve momentum and angular momentum, which gave a poor performance for some problems, the latter conserved momentum and angular momentum but was not energy-stable. The objective of this paper is to design an alternative formulation that is very suited to Picard linearization.

The idea for this paper goes back to a class of locally discontinuous Galerkin (DG) methods for NSEs, cf. \cite{cockburn_locally_2004}. By replacing one of the velocity with its a divergence-free approximation in the nonlinear term, a class of energy-stable DG methods were developed in \cite{cockburn_locally_2004}. Similar techniques were also applied in \cite{guzman_hdiv_2017,lederer_hybrid_2019} and to coupled flow-transport problems \cite{matthies_mass_2007,Gmeiner2014,john_divergence_2017}. Meanwhile, seeking an exactly divergence-free approximation to some discretely divergence-free velocity is also an objective of the pressure-robust reconstruction methods \cite{Linke_robust_2016,Linke2017,Linke2014on}, where a large class of divergence-free reconstruction operators were proposed, almost covered all the classical conforming elements. In this paper, we will use these operators to reconstruct CONV for conforming elements.

One motivation for our methods is that, the CONV formulation not only has the easiest form but also usually shows good performance before the scheme blows up due to the energy instability (e.g., see the comparison experiments in \cite{Rebholz2017}). In our opinion, it has the potential to simulate a problem accurately and the divergence-free reconstruction is one of the keys to unlock this potential. Here we focus on unsteady NSEs and give some theoretical analysis of the reconstructed CONV formulation. In what follows, we shall refer to it simply as ``modified CONV" sometimes. We prove that modified CONV conserves kinetic energy, linear momentum, total vorticity, 2D enstrophy and helicity under the same assumptions as EMAC \cite{Rebholz2017}. Further, we also prove that the Picard linearization conserves them as well. Compared to EMAC, our formulation does not conserve angular momentum theoretically. However, the numerical experiments show that modified CONV preserves this quantity well. For the semi-discrete scheme, we prove that the Gronwall constant of the error bounds does not depend on the Reynolds number explicitly under the assumption $\boldsymbol{u}\in L^{2}(0,T;\boldsymbol{W}^{1,\infty}(\Omega))$. In this aspect a related concept is called ({\it Re}-)semi-robustness \cite{schroeder_towards_2018,john_finite_2018}, which is a type of robustness that the constants (including Gronwall constant) in velocity error estimates do not explicitly depend on the inverse of the viscosity. For {\it Re}-semi-robust methods/analysis, we refer to \cite{schroeder_divergence-free_2018,schroeder_pressure-robust_2017,schroeder_towards_2018,CIP2007,graddiv2018,LPS2019,SPS2021}.

The rest of this paper is organized as follows. In \cref{sec:2} we state the reconstructed scheme and discuss the conservative properties and error estimates of the semi-discrete scheme. \Cref{sec:3} is devoted to investigating the conservative properties of Picard linearization, matching the Crank-Nicolson time stepping method. \Cref{sec:4} studies some numerical experiments.

Throughout the paper we use $C,$ with or without subscript, to denote a generic positive constant. The norm (seminorm) of the Sobolev space $[W^{m,p}(X)]^{n}$ or $[W^{m,p}(X)]^{n\times n}$ ($n\in\mathbb{N}^{+}$) is denoted by $\|\cdot\|_{m,p,X}$ ($|\cdot|_{m,p,X}$, respectively). The standard $L^{2}$ inner product of $[L^{2}(X)]^{n}$ or $[L^{2}(X)]^{n\times n}$ is denoted by $(\cdot, \cdot)_{X}$. When $m=0$ ($X=\Omega$, $p=2$), with the convention that the index $m$ ($X$, $p$, respectively) is omitted.
\section{A modified convective formulation}
\label{sec:2}
Let $\mathcal{T}_{h}$ be shape-regular partitions of $\Omega$ \cite{Ciarlet2002The} and $h=\max_{T\in\mathcal{T}_{h}}h_{T}$ with $h_{T}$ the diameter of element $T$. Let $V=\boldsymbol{H}_{0}^{1}(\Omega)$, $W=L_{0}^{2}(\Omega)$ and $V_{h}\times W_{h}\in V\times W$ denotes a pair of inf-sup stable finite element spaces on $\mathcal{T}_{h}$. Here we do not give them a concrete definition first.

A continuous variational form for \cref{NSE} is:
\begin{subequations}\label{continuousform}
\begin{align}
{\rm Find}~\boldsymbol{u}: (0,T] \rightarrow V \text{ and } p: (0,T] \rightarrow W~ {\rm such ~that}&\quad\quad\quad\quad\quad\qquad\nonumber\\
(\frac{\partial\boldsymbol{u}}{\partial t},\boldsymbol{v})+\nu a(\boldsymbol{u}, \boldsymbol{v})+c(\boldsymbol{u},\boldsymbol{u},\boldsymbol{v})-b(\boldsymbol{v}, p) &=(\boldsymbol{f}, \boldsymbol{v})
\quad\forall~\boldsymbol{v}\in V,\label{continuousform1} \\
b(\boldsymbol{u}, q) &=0\quad\  \forall~q\in W,\label{continuousform2}
\end{align}
\end{subequations}
where
\begin{displaymath}
a(\boldsymbol{u}, \boldsymbol{v})=(\nabla\boldsymbol{u},\nabla\boldsymbol{v}),
\end{displaymath}
\begin{displaymath}
c(\boldsymbol{u},\boldsymbol{v},\boldsymbol{w})=((\boldsymbol{u}\cdot\nabla)\boldsymbol{v},\boldsymbol{w}),
\end{displaymath}
and
\begin{displaymath}
b(\boldsymbol{v},q)=(\nabla\cdot\boldsymbol{v},q),
\end{displaymath}
for any $\boldsymbol{u},\boldsymbol{v},\boldsymbol{w}\in V$ and $q\in W$. Here the trilinear form $c(\cdot,\cdot,\cdot)$ is the so-called convective formulation (CONV).

A semi-discrete analog of \cref{continuousform} reads:
\begin{subequations}\label{semidiscretefrom}
\begin{align}
{\rm Find}~(\boldsymbol{u}_{h},p_{h}): (0,T]\rightarrow V_{h}\times W_{h}~ {\rm such ~that}\quad\quad\quad&\quad\quad\qquad\nonumber\\
(\frac{\partial\boldsymbol{u}_{h}}{\partial t},\boldsymbol{v}_{h})+\nu a(\boldsymbol{u}_{h}, \boldsymbol{v}_{h})+c(\boldsymbol{u}_{h},\boldsymbol{u}_{h},\boldsymbol{v}_{h})-b(\boldsymbol{v}_{h}, p_{h}) &=(\boldsymbol{f}, \boldsymbol{v}_{h})
\quad\forall~\boldsymbol{v}_{h}\in V_{h},\label{semidiscreteform1} \\
b(\boldsymbol{u}_{h}, q_{h}) &=0\qquad\ \ \quad\forall~q_{h}\in W_{h}.\label{semidiscreteform2}
\end{align}
\end{subequations}

Let $\boldsymbol{u}^{*}$ be the velocity solution of \cref{continuousform} or \cref{semidiscretefrom}. Introduce the kinetic energy:
$$
\text { Kinetic energy } \quad E:=\frac{1}{2} \int_{\Omega}|\boldsymbol{u}^{*}|^{2} {~d} \boldsymbol{x}.
$$
The following identity is widely used in energy-stability analysis:
\begin{equation}\label{identityEq}
c(\boldsymbol{u},\boldsymbol{v},\boldsymbol{w})=-c(\boldsymbol{u},\boldsymbol{w},\boldsymbol{v})-\left(\left(\nabla\cdot\boldsymbol{u}\right)\boldsymbol{v},\boldsymbol{w}\right)\quad\forall~ \boldsymbol{u},\boldsymbol{w},\boldsymbol{v} \in V.
\end{equation}

\Cref{identityEq} gives a skew-symmetric structure for $``\boldsymbol{v}"$ and $``\boldsymbol{w}"$ in $c(\cdot,\cdot,\cdot)$ when $``\boldsymbol{u}"$ is divergence-free. Based on this fact, one could further prove that the continuous velocity is energy-stable in \cref{continuousform}. In the discrete level, however, the divergence constraint is usually relaxed for most of the classical elements such as the Bernardi-Raugel elements \cite{bernardi_analysis_1985} and the Taylor-Hood elements \cite{girault_finite_1986,boffi_mixed_2013}. Then the skew-symmetric structure loses.
The discretely divergence-free subspace of $V_{h}$ is defined by
\begin{equation}\label{divfreesubspace}
V_{h}^{0}=\{\boldsymbol{v}_{h}\in{V}_{h}: b(\boldsymbol{v}_{h},q_{h})=0\quad\forall~q_{h}\in W_{h}\}.
\end{equation}

To fix the lack of skew-symmetry, one commonly used method is to modify the CONV formulation into the following SKEW formulation:
\begin{equation}\label{skewformula}
c_{\text{skew}}(\boldsymbol{u},\boldsymbol{v},\boldsymbol{w})=c(\boldsymbol{u},\boldsymbol{v},\boldsymbol{w})+\frac{1}{2}\left(\left(\nabla\cdot\boldsymbol{u}\right)\boldsymbol{v},\boldsymbol{w}\right).
\end{equation}
However, Olshanskii and Rebholz \cite{Rebholz2020} showed that the SKEW formulation might give a poor accuracy for long time simulations with higher Reynolds number. In this paper, we consider another skew-symmetrization way for CONV, where we shall use the divergence-free projection operators.

We define
\begin{displaymath}
\boldsymbol{H}_{0}(\operatorname{div};\Omega)=\{\boldsymbol{v}\in\boldsymbol{H}(\operatorname{div};\Omega): \boldsymbol{v}\cdot\boldsymbol{n}|_{\Gamma}=0\},
\end{displaymath}
with $\boldsymbol{n}$ the unit external normal vector on $\Gamma$.
Denote by $\Pi_{h}: V_{h} \rightarrow \boldsymbol{H}_{0}(\operatorname{div};\Omega)$ a generic divergence-free projection operator, which may be different for different space pairs. Introduce a corresponding finite element space $X_{h}\subset \boldsymbol{H}_{0}(\operatorname{div};\Omega)$ such that $\Pi_{h}V_{h}\subseteq X_{h}$.
Here we assume that $\Pi_{h}$ has the following properties:
\begin{assumption}\label{assum1}
\ding{192} $\nabla \cdot\Pi_{h}\boldsymbol{v}_{h}\equiv0$ for all $\boldsymbol{v}_{h}\in V_{h}^{0}$;
\ding{193} there exists a constant $C$ independent of $h$ such that $\left\|\Pi_{h}\boldsymbol{v}_{h}\right\|\leq C \left\|\boldsymbol{v}_{h}\right\|$ for all $\boldsymbol{v}_{h}\in V_{h}^{0}$.
\end{assumption}
For most cases, the inequality in \cref{assum1} \ding{193} can be obtained by a combination of the $L^{2}$ approximation property of $\Pi_{h}$, the inverse inequality and triangle inequality.

Introduce the modified convective formulation (modified CONV)
\begin{equation}\label{modconv}
c_{h}(\boldsymbol{u}_{h},\boldsymbol{v}_{h},\boldsymbol{w}_{h}):=c(\Pi_{h}\boldsymbol{u}_{h},\boldsymbol{v}_{h},\boldsymbol{w}_{h}).
\end{equation}
We consider the reconstructed scheme: ${\rm Find}~(\boldsymbol{u}_{h},p_{h}): (0,T]\rightarrow V_{h}\times W_{h}~ {\rm such ~that}$
\begin{subequations}\label{EMsemidiscreteform}
\begin{align}
(\frac{\partial\boldsymbol{u}_{h}}{\partial t},\boldsymbol{v}_{h})+\nu a(\boldsymbol{u}_{h}, \boldsymbol{v}_{h})+c_{h}(\boldsymbol{u}_{h},\boldsymbol{u}_{h},\boldsymbol{v}_{h})-b(\boldsymbol{v}_{h}, p_{h}) &=(\boldsymbol{f}, \boldsymbol{v}_{h})
\quad\forall~\boldsymbol{v}_{h}\in V_{h}, \label{EMsemidiscreteform1} \\
b(\boldsymbol{u}_{h}, q_{h}) &=0\qquad\ \, \quad\forall~q_{h}\in W_{h}. \label{EMsemidiscreteform2}
\end{align}
\end{subequations}

The following lemma is very essential for our analysis.
\begin{lemma}\label{essentiallemma}
For any $(\boldsymbol{u}_{h},\boldsymbol{v}_{h},\boldsymbol{w}_{h})\in X_{h}\times V_{h} \times V_{h}$ we have
\begin{equation}\nonumber
c(\boldsymbol{u}_{h},\boldsymbol{v}_{h},\boldsymbol{w}_{h})=-c(\boldsymbol{u}_{h},\boldsymbol{w}_{h},\boldsymbol{v}_{h})
\end{equation} if
\begin{itemize}
   \item[\text{1)}] $\nabla\cdot\boldsymbol{u}_{h}=0$;
   \item[\text{2)}] $\boldsymbol{u}_{h}\cdot\boldsymbol{n}=0$ or $\boldsymbol{v}_{h}=\boldsymbol{0}$ or $\boldsymbol{w}_{h}=\boldsymbol{0}$ on $\Gamma$.
\end{itemize}
\end{lemma}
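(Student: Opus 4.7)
The plan is to derive the identity
\[
c(\boldsymbol{u}_{h},\boldsymbol{v}_{h},\boldsymbol{w}_{h})+c(\boldsymbol{u}_{h},\boldsymbol{w}_{h},\boldsymbol{v}_{h})=-\bigl((\nabla\cdot\boldsymbol{u}_{h})\boldsymbol{v}_{h},\boldsymbol{w}_{h}\bigr)+\int_{\Gamma}(\boldsymbol{u}_{h}\cdot\boldsymbol{n})(\boldsymbol{v}_{h}\cdot\boldsymbol{w}_{h})\,dS,
\]
which is a boundary-aware version of \cref{identityEq}, and then kill each of the two terms on the right using the two hypotheses. The skew-symmetry claim is then immediate.

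First I would apply the product rule pointwise on each element $T\in\mathcal{T}_{h}$: since $\boldsymbol{u}_{h}|_{T}$, $\boldsymbol{v}_{h}|_{T}$, $\boldsymbol{w}_{h}|_{T}$ are polynomial, we have $(\boldsymbol{u}_{h}\cdot\nabla)(\boldsymbol{v}_{h}\cdot\boldsymbol{w}_{h})=\bigl((\boldsymbol{u}_{h}\cdot\nabla)\boldsymbol{v}_{h}\bigr)\cdot\boldsymbol{w}_{h}+\bigl((\boldsymbol{u}_{h}\cdot\nabla)\boldsymbol{w}_{h}\bigr)\cdot\boldsymbol{v}_{h}$. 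Integrating over $T$ and summing yields, after an elementwise integration by parts applied to the scalar field $\boldsymbol{v}_{h}\cdot\boldsymbol{w}_{h}$,
\[
c(\boldsymbol{u}_{h},\boldsymbol{v}_{h},\boldsymbol{w}_{h})+c(\boldsymbol{u}_{h},\boldsymbol{w}_{h},\boldsymbol{v}_{h})=-\sum_{T}\int_{T}(\nabla\cdot\boldsymbol{u}_{h})(\boldsymbol{v}_{h}\cdot\boldsymbol{w}_{h})\,dx+\sum_{T}\int_{\partial T}(\boldsymbol{v}_{h}\cdot\boldsymbol{w}_{h})(\boldsymbol{u}_{h}\cdot\boldsymbol{n}_{T})\,dS.
\]

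The key step is to collapse the sum of interface integrals. On each interior face $F$ shared by two elements $T^{+}$ and $T^{-}$ with opposite outward normals, the scalar $\boldsymbol{v}_{h}\cdot\boldsymbol{w}_{h}$ is single-valued because $V_{h}\subset\boldsymbol{H}^{1}_{0}(\Omega)$ is $H^{1}$-conforming, while the normal trace $\boldsymbol{u}_{h}\cdot\boldsymbol{n}_{T}$ flips sign across $F$ since $\boldsymbol{u}_{h}\in X_{h}\subset\boldsymbol{H}(\operatorname{div};\Omega)$ has continuous normal trace. Hence the two contributions on $F$ cancel, and only the physical boundary integral over $\Gamma$ survives, giving the displayed identity.

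Finally I would invoke the hypotheses: condition~1) makes the volume term vanish; condition~2), in any of its three forms, makes the boundary term vanish (directly for $\boldsymbol{u}_{h}\cdot\boldsymbol{n}=0$, and via the vanishing trace of $\boldsymbol{v}_{h}$ or $\boldsymbol{w}_{h}$ in the other two cases). The main obstacle, though minor, is justifying the interface cancellation rigorously; everything else is a routine chain-rule/integration-by-parts calculation made slightly delicate only because $X_{h}$ is not $H^{1}$-conforming.
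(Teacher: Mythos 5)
Your proposal is correct and follows essentially the same route as the paper: an elementwise integration by parts (equivalently, the product rule on $\boldsymbol{v}_{h}\cdot\boldsymbol{w}_{h}$ plus the divergence theorem), cancellation of the interelement face terms using the single-valuedness of $\boldsymbol{v}_{h}\cdot\boldsymbol{w}_{h}$ and of the normal trace of $\boldsymbol{u}_{h}$, and then the hypotheses $\nabla\cdot\boldsymbol{u}_{h}=0$ and the boundary condition to remove the volume and boundary terms. If anything, you make the interface cancellation more explicit than the paper, which only appeals to ``continuity and boundary conditions'' at that step.
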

\begin{proof}
\cref{essentiallemma} can be regarded as a special case of the skew-symmetric discontinuous Galerkin formulation (see, e.g., \cite[Lemma 6.39]{di_pietro_mathematical_2012}). Here for completeness we reprove this special case. To make the proof process more clear, we will derive it in the component form. Let $(v_{1},v_{2},...,v_{d})$ denotes the component form of any $\boldsymbol{v}_{h}\in \boldsymbol{L}^{2}(\Omega)$. We have
\begin{equation}\label{essential1}
c(\boldsymbol{u}_{h},\boldsymbol{v}_{h},\boldsymbol{w}_{h})=\left(\left(\nabla\boldsymbol{v}_{h}\right)\boldsymbol{u}_{h},\boldsymbol{w}_{h}\right)
=\sum_{1\leq i,j\leq d}\sum_{T\in\mathcal{T}_{h}}\int_{T}u_{i}\frac{\partial v_{j}}{\partial x_{i}}w_{j}~d\boldsymbol{x}.
\end{equation}
The integration by parts on each element $T$ gives
\begin{equation}\label{essential2}
\begin{aligned}
\int_{T}u_{i}\frac{\partial v_{j}}{\partial x_{i}}w_{j}~d\boldsymbol{x}&=-\int_{T}\frac{\partial (u_{i}w_{j})}{\partial x_{i}}v_{j}~d\boldsymbol{x}+\int_{\partial T}u_{i}n_{i}v_{j}w_{j}~ds\\
&=-\int_{T}\left(u_{i}\frac{\partial w_{j}}{\partial x_{i}}v_{j}+\frac{\partial u_{i}}{\partial x_{i}}v_{j}w_{j}\right)~d\boldsymbol{x}+\int_{\partial T}u_{i}n_{i}v_{j}w_{j}~ds,
\end{aligned}
\end{equation}
where $n_{i}$ denotes the $i$-th component of the unit external normal vector of $T$, $\boldsymbol{n}$. A combination of the continuity and boundary conditions of $\boldsymbol{u}_{h},\boldsymbol{v}_{h}$ and $\boldsymbol{w}_{h}$ implies that
\begin{equation}\label{essential3}
\sum_{1\leq i,j\leq d}\sum_{T\in\mathcal{T}_{h}}\int_{\partial T}u_{i}n_{i}v_{j}w_{j}~ds=\sum_{T\in\mathcal{T}_{h}}\int_{\partial T}(\boldsymbol{u}_{h}\cdot\boldsymbol{n})(\boldsymbol{v}_{h}\cdot\boldsymbol{w}_{h})~ds=0,
\end{equation}
and the first condition in \cref{essentiallemma} gives
\begin{equation}\label{essential4}
\begin{aligned}
\sum_{1\leq i,j\leq d}\sum_{T\in\mathcal{T}_{h}}\int_{T}u_{i}\frac{\partial w_{j}}{\partial x_{i}}v_{j}+\frac{\partial u_{i}}{\partial x_{i}}v_{j}w_{j}~d\boldsymbol{x}&=
c(\boldsymbol{u}_{h},\boldsymbol{w}_{h},\boldsymbol{v}_{h})+\left(\left(\nabla\cdot\boldsymbol{u}_{h}\right)\boldsymbol{v}_{h},\boldsymbol{w}_{h}\right)\\
&=c(\boldsymbol{u}_{h},\boldsymbol{w}_{h},\boldsymbol{v}_{h}).
\end{aligned}
\end{equation}
Then \cref{essentiallemma} follows immediately from \cref{essential1,essential2,essential3,essential4}.
\end{proof}

By \cref{essentiallemma} and \cref{assum1} one can obtain
\begin{equation}\label{essentialformu1}
c_{h}(\boldsymbol{u}_{h},\boldsymbol{v}_{h},\boldsymbol{w}_{h})=-c_{h}(\boldsymbol{u}_{h},\boldsymbol{w}_{h},\boldsymbol{v}_{h}) \text{ for all } \boldsymbol{u}_{h}\in V_{h}^{0}, \boldsymbol{v}_{h},\boldsymbol{w}_{h}\in V_{h}.
\end{equation}
\subsection{Conservation laws}
In this subsection, we focus on the conservative properties of the modified method \cref{EMsemidiscreteform}.
First, let us give the definitions of momentum, angular momentum, helicity, enstrophy and vorticity. Denote by $\boldsymbol{u}^{*}$ the exact velocity in \cref{NSE} or the discrete velocity in \cref{EMsemidiscreteform}. Let $\boldsymbol{w}^{*}=\textbf{curl}\boldsymbol{u}^{*}$ when $\boldsymbol{u}^{*}$ represents the continuous solution; let $\boldsymbol{w}^{*}$ be the solution of some finite element discretization of the Navier-Stokes vorticity equation (see \cref{curlNSE} and \cref{curlNSEdiscrete} below) when $\boldsymbol{u}^{*}$ is the discrete velocity. Note that the operator \textbf{curl} in two dimensions denotes a scalar operator \cite{girault_finite_1986}. Then we define
$$
\text { Linear momentum } \quad M:=\int_{\Omega} \boldsymbol{u}^{*} ~{d} \boldsymbol{x} ; $$
$$\text { Angular momentum } \quad M_{\boldsymbol{x}}:=\int_{\Omega} \boldsymbol{u}^{*} \times \boldsymbol{x} ~{d} \boldsymbol{x} ;
$$
$$
\text { Helicity } \quad H:=\int_{\Omega} \boldsymbol{u}^{*} \cdot \boldsymbol{w}^{*} {~d} \boldsymbol{x} ;$$
$$
\text { Enstrophy } \quad H_{E}:=\frac{1}{2} \int_{\Omega} \boldsymbol{w}^{*} \cdot \boldsymbol{w}^{*} {~d} \boldsymbol{x} ;
$$
$$\text { Total vorticity } \quad W:=\int_{\Omega} \boldsymbol{w}^{*} {~d} \boldsymbol{x}.
$$
\begin{remark}
In what follows we shall discuss some conservative properties for the case $\nu=0$ (the Euler equations) sometimes. For an Euler equation, the boundary conditions should be altered in \cref{NSE}. Here we replace the no-slip boundary condition ($\boldsymbol{u}=\boldsymbol{0}$) with the no-penetration boundary condition ($\boldsymbol{u}\cdot\boldsymbol{n}=0$) for it. In this case the method \cref{EMsemidiscreteform} should only strongly enforce the no-penetration boundary condition also. By abuse of notation, we shall use \cref{NSE} and \cref{EMsemidiscreteform} to denote the Euler equation and the corresponding finite element methods as well, respectively.
\end{remark}
\begin{theorem}
Under \cref{assum1} \ding{192}, the modified method \cref{EMsemidiscreteform} properly balances the kinetic energy:
\begin{equation}\label{energyconser}
\frac{1}{2}\frac{d}{dt}||\boldsymbol{u}_{h}||^{2}+\nu\|\nabla\boldsymbol{u}_{h}\|^{2}=(\boldsymbol{f},\boldsymbol{u}_{h}).
\end{equation}
\end{theorem}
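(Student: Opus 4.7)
The plan is to test the momentum equation \cref{EMsemidiscreteform1} with $\boldsymbol{v}_h = \boldsymbol{u}_h$ and the continuity equation \cref{EMsemidiscreteform2} with $q_h = p_h$, then exploit the skew-symmetry identity \cref{essentialformu1} to eliminate the nonlinear term.

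First, since $\boldsymbol{u}_h$ satisfies \cref{EMsemidiscreteform2} for every $q_h \in W_h$, by definition \cref{divfreesubspace} we have $\boldsymbol{u}_h(t) \in V_h^0$ for $t \in (0,T]$. Choosing $q_h = p_h$ then yields $b(\boldsymbol{u}_h, p_h) = 0$, so the pressure term drops out when we test the momentum equation against $\boldsymbol{u}_h$. Setting $\boldsymbol{v}_h = \boldsymbol{u}_h$ in \cref{EMsemidiscreteform1} and using the standard identities $(\partial_t \boldsymbol{u}_h, \boldsymbol{u}_h) = \tfrac{1}{2}\tfrac{d}{dt}\|\boldsymbol{u}_h\|^2$ and $a(\boldsymbol{u}_h,\boldsymbol{u}_h) = \|\nabla \boldsymbol{u}_h\|^2$, I obtain
\begin{equation}\nonumber
\frac{1}{2}\frac{d}{dt}\|\boldsymbol{u}_h\|^2 + \nu\|\nabla \boldsymbol{u}_h\|^2 + c_h(\boldsymbol{u}_h,\boldsymbol{u}_h,\boldsymbol{u}_h) = (\boldsymbol{f},\boldsymbol{u}_h).
\end{equation}

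The only remaining task is to show that the trilinear term vanishes. This is exactly where \cref{assum1} \ding{192} enters: it guarantees $\nabla\cdot \Pi_h \boldsymbol{u}_h \equiv 0$ since $\boldsymbol{u}_h\in V_h^0$, so $\Pi_h \boldsymbol{u}_h$ meets the divergence-free hypothesis of \cref{essentiallemma}, while the no-slip boundary condition on $V_h$ gives $\boldsymbol{v}_h=\boldsymbol{0}$ on $\Gamma$ to cover the boundary hypothesis. Equivalently, one applies the already-derived identity \cref{essentialformu1} with $\boldsymbol{v}_h = \boldsymbol{w}_h = \boldsymbol{u}_h$, which immediately yields $c_h(\boldsymbol{u}_h,\boldsymbol{u}_h,\boldsymbol{u}_h) = -c_h(\boldsymbol{u}_h,\boldsymbol{u}_h,\boldsymbol{u}_h)$, hence $c_h(\boldsymbol{u}_h,\boldsymbol{u}_h,\boldsymbol{u}_h)=0$. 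Substituting this back produces the claimed energy balance \cref{energyconser}.

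There is no genuine obstacle here; the proof is essentially a direct application of \cref{essentialformu1}, and the whole point of the reconstruction $\Pi_h$ is precisely to enable this step for merely discretely divergence-free $\boldsymbol{u}_h$. The one place to be careful is to invoke \cref{assum1} \ding{192} (and not the full \cref{assum1}) so that the hypothesis matches the statement, and to note explicitly that the no-slip boundary condition built into $V_h \subset \boldsymbol{H}^1_0(\Omega)$ suffices to discharge condition 2) of \cref{essentiallemma}; in the Euler/no-penetration variant mentioned in the remark, one would instead use $\boldsymbol{u}_h\cdot\boldsymbol{n}=0$ together with the fact that $\Pi_h$ maps into $\boldsymbol{H}_0(\operatorname{div};\Omega)$.
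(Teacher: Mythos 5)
Your proposal is correct and follows essentially the same route as the paper: test \cref{EMsemidiscreteform1} with $\boldsymbol{v}_h=\boldsymbol{u}_h$, note $b(\boldsymbol{u}_h,p_h)=0$ since $\boldsymbol{u}_h\in V_h^0$, and kill the trilinear term via \cref{essentialformu1}, which is exactly how the paper argues. Your extra care about which hypothesis of \cref{essentiallemma} is discharged by the no-slip condition is a welcome but inessential elaboration.
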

\begin{proof}
\Cref{essentialformu1} implies that $c_{h}(\boldsymbol{u}_{h},\boldsymbol{u}_{h},\boldsymbol{u}_{h})=0$. Then \cref{energyconser} follows immediately  from taking $\boldsymbol{v}_{h}=\boldsymbol{u}_{h}$ in \cref{EMsemidiscreteform1}.
\end{proof}

\begin{remark}
When $\nu=0$ and $\boldsymbol{f}=\boldsymbol{0}$, one can similarly obtain $\frac{1}{2}\frac{d}{dt}\|\boldsymbol{u}_{h}\|^{2}=0$. In this case, kinetic energy is conserved by our method.
\end{remark}

To analysis the conservative properties of the other quantities, we need some extra assumptions, which are similar to the EMAC analysis \cite{Rebholz2017}.
\begin{assumption}\label{compactsupport}
The exact solution $(\boldsymbol{u},p)$, the discrete solution $(\boldsymbol{u}_{h},p_{h})$ and the source term $\boldsymbol{f}$ are only supported on a sub-domain $\hat{\Omega}\subset\Omega$ such that there exists restriction $\chi(\boldsymbol{g})\in V_{h}$ for $\boldsymbol{g}=\boldsymbol{e}_{i}, \boldsymbol{x}\times \boldsymbol{e}_{i}$ with $\chi(\boldsymbol{g})=\boldsymbol{g}$ in $\hat{\Omega}$ and $\chi(\boldsymbol{g})=\boldsymbol{0}$ on $\Gamma$. Here $\boldsymbol{e}_{i}$ represents the $i$-th usual basis of $\mathbb{R}^{d}$.
\end{assumption}
\begin{theorem}\label{mhevconser}
Under \cref{assum1} \ding{192} and \cref{compactsupport}, the modified method \cref{EMsemidiscreteform} conserves momentum (for $\boldsymbol{f}$ with zero linear momentum), helicity (for $\nu=0$ and $\boldsymbol{f}=\boldsymbol{0}$), 2D enstrophy (for $\nu=0$ and $\boldsymbol{f}=\boldsymbol{0}$), and total vorticity.
\end{theorem}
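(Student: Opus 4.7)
The plan is to establish each of the four conservation laws in turn: linear momentum follows directly from \cref{EMsemidiscreteform1} by plugging in cutoff test functions, while total vorticity, 2D enstrophy and helicity are obtained by testing the companion discrete vorticity equation (to be introduced in \cref{curlNSEdiscrete}) in the analogous way. The unifying idea is to pick a test function of the form $\chi(\boldsymbol{g})$ supplied by \cref{compactsupport}, and then exploit (i) the support of the solution in $\hat{\Omega}$ to kill the viscous, pressure and source contributions, and (ii) the skew-symmetry \cref{essentialformu1}, which is itself a direct consequence of \cref{essentiallemma} and \cref{assum1} \ding{192}, to eliminate the modified convective term.

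For linear momentum I take $\boldsymbol{v}_{h} = \chi(\boldsymbol{e}_{i})$ in \cref{EMsemidiscreteform1} for each $i=1,\dots,d$. Because $\chi(\boldsymbol{e}_{i}) \equiv \boldsymbol{e}_{i}$ on $\hat{\Omega}$ while $\boldsymbol{u}_{h}$ and $p_{h}$ vanish outside $\hat{\Omega}$, the time-derivative term reduces to $\frac{d}{dt}M_{i}$ and both $\nu a(\boldsymbol{u}_{h},\chi(\boldsymbol{e}_{i}))$ and $b(\chi(\boldsymbol{e}_{i}),p_{h})$ vanish since $\nabla\chi(\boldsymbol{e}_{i}) = \boldsymbol{0}$ on $\hat{\Omega}$. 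For the convective piece, \cref{essentialformu1} gives
\begin{equation}\nonumber
c_{h}(\boldsymbol{u}_{h},\boldsymbol{u}_{h},\chi(\boldsymbol{e}_{i})) = -c_{h}(\boldsymbol{u}_{h},\chi(\boldsymbol{e}_{i}),\boldsymbol{u}_{h}) = -\bigl((\Pi_{h}\boldsymbol{u}_{h}\cdot\nabla)\chi(\boldsymbol{e}_{i}),\boldsymbol{u}_{h}\bigr),
\end{equation}
which again vanishes by the same support argument, and the zero-linear-momentum hypothesis on $\boldsymbol{f}$ finally yields $\frac{d}{dt}M=\boldsymbol{0}$.

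For total vorticity I apply the same template to the discrete vorticity equation, testing against the restriction $\chi(1)$ in 2D (or $\chi(\boldsymbol{e}_{i})$ in 3D) so that only $\frac{d}{dt}W$ survives. For 2D enstrophy I test the scalar vorticity equation against $w_{h}$ itself; the convective contribution $((\Pi_{h}\boldsymbol{u}_{h}\cdot\nabla)w_{h},w_{h})$ integrates by parts to $-\tfrac{1}{2}(\nabla\cdot\Pi_{h}\boldsymbol{u}_{h},w_{h}^{2})=0$ by \cref{assum1} \ding{192}, with the boundary term dropping because $\Pi_{h}\boldsymbol{u}_{h}\cdot\boldsymbol{n}=0$ on $\Gamma$. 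For helicity I would pair the momentum equation tested against $\boldsymbol{w}_{h}$ with the discrete vorticity equation tested against $\boldsymbol{u}_{h}$ and sum: the transport pieces $c_{h}(\boldsymbol{u}_{h},\boldsymbol{u}_{h},\boldsymbol{w}_{h})+c_{h}(\boldsymbol{u}_{h},\boldsymbol{w}_{h},\boldsymbol{u}_{h})$ cancel by \cref{essentialformu1}, the pressure contribution drops because the discrete vorticity is $\operatorname{curl}$-like and hence orthogonal to $\nabla p_{h}$, and the vortex-stretching term is forced to zero by an analogous application of \cref{essentiallemma} with the reconstructed velocity.

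The main obstacle is the helicity identity, which is the most delicate: it requires that \cref{curlNSEdiscrete} be formulated \emph{consistently} with the $\Pi_{h}$-reconstruction so that the vortex-stretching term also collapses via \cref{essentialformu1}, and that $\chi(\boldsymbol{e}_{i})$ remain an admissible test object in the vorticity variational setting. A secondary subtlety is the boundary treatment in the Euler case, where the no-slip condition is replaced by no-penetration, so one must verify that the trace $\Pi_{h}\boldsymbol{u}_{h}\cdot\boldsymbol{n}$ vanishes on $\Gamma$; this is precisely condition (2) of \cref{essentiallemma}, so it slots into the template without new work. With these pieces in place, all four balance laws reduce to the same pattern: choose the right $\chi(\boldsymbol{g})$, invoke support plus \cref{essentialformu1}, and read off the identity.
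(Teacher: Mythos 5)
Your proposal follows essentially the same route as the paper: test with the cutoff functions $\chi(\boldsymbol{g})$, use the skew-symmetry \cref{essentialformu1} (with the Lagrange-multiplier constraint in \cref{curlNSEdiscrete} guaranteeing $b(\boldsymbol{w}_h,q_h)=0$ and hence $\nabla\cdot\Pi_h\boldsymbol{w}_h\equiv 0$, which is what kills the vortex-stretching term), and pair the momentum equation tested with $\boldsymbol{w}_h$ against the discrete vorticity equation tested with $\boldsymbol{u}_h$ for helicity. The only cosmetic difference is that the paper disposes of the pressure coupling via the exact statement $b(\boldsymbol{w}_h,p_h)=0$ rather than your informal ``curl-like, hence orthogonal to $\nabla p_h$'' remark, but the substance is identical.
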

We divide the proof of the above theorem into several subsections.
\subsubsection{Momentum}
Testing with $(\boldsymbol{v}_{h},q_{h})=(\chi({\boldsymbol{e}_{i}}),0)$ in \cref{EMsemidiscreteform} and applying \cref{essentialformu1} give that
\begin{displaymath}
\left(\boldsymbol{f}, \boldsymbol{e}_{i}\right)=\frac{d}{d t}\left(\boldsymbol{u}_{h}, \boldsymbol{e}_{i}\right)+c_{h}(\boldsymbol{u}_{h},\boldsymbol{u}_{h},\boldsymbol{e}_{i})=\frac{d}{d t}\left(\boldsymbol{u}_{h}, \boldsymbol{e}_{i}\right)- c_{h}(\boldsymbol{u}_{h},\boldsymbol{e}_{i},\boldsymbol{u}_{h})=\frac{d}{d t}\left(\boldsymbol{u}_{h}, \boldsymbol{e}_{i}\right).
\end{displaymath}
Then the conservation of momentum follows.
\begin{remark}
For the analysis of angular momentum, note that $\left(\boldsymbol{u}_{h}\times \boldsymbol{x}\right)\cdot\boldsymbol{e}_{i}=\boldsymbol{u}_{h}\cdot (\boldsymbol{x}\times\boldsymbol{e}_{i})$. Then substituting $(\boldsymbol{v}_{h},q_{h})=(\chi(\boldsymbol{x}\times \boldsymbol{e}_{i}),0)$ into \cref{EMsemidiscreteform}, one could similarly obtain that
\begin{displaymath}
\frac{d}{d t}\left(\boldsymbol{u}_{h}, \boldsymbol{x}\times\boldsymbol{e}_{i}\right)- c_{h}(\boldsymbol{u}_{h},\boldsymbol{x}\times\boldsymbol{e}_{i},\boldsymbol{u}_{h})=\left(\boldsymbol{f}, \boldsymbol{x}\times\boldsymbol{e}_{i}\right).
\end{displaymath}
Here we apply the fact that $\nabla\cdot(\boldsymbol{x}\times \boldsymbol{e}_{i})=0$ and $\Delta(\boldsymbol{x}\times \boldsymbol{e}_{i})=0$.
Assuming that $\boldsymbol{f}$ has zero angular momentum, i.e., $\left(\boldsymbol{f}, \boldsymbol{x}\times\boldsymbol{e}_{i}\right)=0$, the above equation gives that $\frac{d}{d t}\left(\boldsymbol{u}_{h}, \boldsymbol{x}\times\boldsymbol{e}_{i}\right)=c_{h}(\boldsymbol{u}_{h},\boldsymbol{x}\times\boldsymbol{e}_{i},\boldsymbol{u}_{h})=-\left(\Pi_{h}\boldsymbol{u}_{h}\times\boldsymbol{u}_{h},\boldsymbol{e}_{i}\right)\neq0,$
since $\Pi_{h}\boldsymbol{u}_{h}\neq \boldsymbol{u}_{h}$ in general. Thus angular momentum is not preserved by our methods.
\end{remark}
\subsubsection{Helicity}
Let $\boldsymbol{w}$ be the curl of the exact solution $\boldsymbol{u}$: $\boldsymbol{w}=\nabla\times \boldsymbol{u}$. Note that $\nabla\cdot\boldsymbol{w}=0$. In two dimensions, $\boldsymbol{w}$ represents a scalar and we use the symbol $w$ to denote it. Taking the curl operator on \cref{NSE1} gives the following Navier-Stokes
vorticity equations:
\begin{equation}\label{curlNSE}
\boldsymbol{w}_{t}+\left(\boldsymbol{u} \cdot \nabla\right) \boldsymbol{w}-\left(\boldsymbol{w} \cdot \nabla\right) \boldsymbol{u}-\nu \Delta \boldsymbol{w}=\nabla\times \boldsymbol{f}.
\end{equation}

For the discrete case, we apply the strategy in \cite{Rebholz2010,Rebholz2017}. Here we do not analyze the quantity $\boldsymbol{w}_{h}=\textbf{curl}\boldsymbol{u}_{h}$ for the discrete solution $\boldsymbol{u}_{h}$. We consider a sightly altered virticity $\boldsymbol{w}_{h}$ which is a solution of some direct discretization of \cref{curlNSE}. As it was said in \cite{Rebholz2017}, {\it `this discrete vorticity still depends on the computed
velocity $\boldsymbol{u}_{h}$, but more implicitly, through the equation coefficients'}. We further assume
vorticity also vanishes on and near the boundary due to \cref{compactsupport}. The corresponding finite element methods to \cref{curlNSE} reads: Find $\boldsymbol{w}_{h}: (0,T] \rightarrow V_{h}$ and the Lagrange multifier $\eta_{h}: (0,T] \rightarrow W_{h}$ such that
\begin{equation}\label{curlNSEdiscrete}
\begin{aligned}
\left(\frac{\partial \boldsymbol{w}_{h}}{\partial t}, \boldsymbol{v}_{h}\right)+c_{h}(\boldsymbol{u}_{h}, \boldsymbol{w}_{h}, \boldsymbol{v}_{h})&-c_{h}(\boldsymbol{w}_{h}, \boldsymbol{u}_{h}, \boldsymbol{v}_{h})+\\ \nu(\nabla \boldsymbol{w}_{h}, \nabla \boldsymbol{v}_{h})-&b( \boldsymbol{v}_{h},\eta_{h})+b(\boldsymbol{w}_{h},q_{h})=(\nabla\times \boldsymbol{f}, \boldsymbol{v}_{h}),
\end{aligned}
\end{equation}
for any $\boldsymbol{v}_{h}\in V_{h}$ and $q_{h}\in W_{h}$, where $\boldsymbol{u}_{h}$ is the solution of \cref{EMsemidiscreteform}. Note that \cref{curlNSEdiscrete} implies that $b(\boldsymbol{w}_{h},q_{h})=0$ for all $q_{h}\in W_{h}$ and further $\nabla\cdot\Pi_{h}\boldsymbol{w}_{h}\equiv 0$.

Testing with $(\boldsymbol{v}_{h},q_{h})=(\boldsymbol{w}_{h},0)$ in \cref{EMsemidiscreteform} gives
\begin{equation}\label{helicity1}
\left(\frac{\partial \boldsymbol{u}_{h}}{\partial t}, \boldsymbol{w}_{h}\right)+\nu(\nabla \boldsymbol{u}_{h}, \nabla \boldsymbol{w}_{h})+c_{h}\left(\boldsymbol{u}_{h},\boldsymbol{u}_{h}, \boldsymbol{w}_{h}\right)=(\boldsymbol{f},\boldsymbol{w}_{h}).
\end{equation}
Meanwhile, testing with $(\boldsymbol{v}_{h},q_{h})=(\boldsymbol{u}_{h},0)$ in \cref{curlNSEdiscrete} gives
\begin{equation}\label{helicity2}
\left(\frac{\partial \boldsymbol{w}_{h}}{\partial t}, \boldsymbol{u}_{h}\right)+c_{h}(\boldsymbol{u}_{h}, \boldsymbol{w}_{h}, \boldsymbol{u}_{h})-c_{h}(\boldsymbol{w}_{h}, \boldsymbol{u}_{h}, \boldsymbol{u}_{h})+ \nu(\nabla \boldsymbol{w}_{h}, \nabla \boldsymbol{u}_{h})=(\nabla\times \boldsymbol{f}, \boldsymbol{u}_{h}).
\end{equation}
Adding \cref{helicity1} and \cref{helicity2} and setting $\nu=0$ and $\boldsymbol{f}=\boldsymbol{0}$, one obtain
\begin{equation}\nonumber
\frac{d}{d t}\left(\boldsymbol{u}_{h},\boldsymbol{w}_{h}\right)+c_{h}(\boldsymbol{u}_{h}, \boldsymbol{w}_{h}, \boldsymbol{u}_{h})-c_{h}(\boldsymbol{w}_{h}, \boldsymbol{u}_{h}, \boldsymbol{u}_{h})+c_{h}(\boldsymbol{u}_{h}, \boldsymbol{u}_{h}, \boldsymbol{w}_{h})=\frac{d}{d t}\left(\boldsymbol{u}_{h},\boldsymbol{w}_{h}\right)=0,
\end{equation}
where we use the property \cref{essentialformu1}. Thus helicity is also conserved.
\subsubsection{2D Enstrophy}
In two dimensions the vorticity $w$ satisfies
\begin{displaymath}
w_{t}+(\boldsymbol{u} \cdot \nabla) w-\nu\Delta w=\nabla\times\boldsymbol{f}
\end{displaymath}
The corresponding finite element scheme reads:
\begin{equation}\label{2DcurlNSEdiscrete}
\left(\frac{\partial w_{h}}{\partial t}, v\right)+((\Pi_{h}\boldsymbol{u}_{h} \cdot \nabla) w_{h}, v_{h})+\nu(\nabla w_{h}, \nabla v_{h})=(\nabla\times\boldsymbol{f}, v_{h})
\end{equation}
Similarly, we have $((\Pi_{h}\boldsymbol{u}_{h} \cdot \nabla) w_{h}, v_{h})=-((\Pi_{h}\boldsymbol{u}_{h} \cdot \nabla) v_{h}, w_{h})$.
Thus, taking $v_{h}=w_{h}$ and for the case $\nu=0,\boldsymbol{f}=0$ we have
\begin{equation}
\frac{1}{2} \frac{d}{d t}\|w_{h}\|^{2}=0.
\end{equation}
Thus we prove the conservative property of the 2D enstrophy.

For 3D flows, taking $(\boldsymbol{v}_{h},q_{h})=(\boldsymbol{w}_{h},0)$ in \cref{curlNSEdiscrete} we arrive at
\begin{equation}\label{3Denstrophy}
\frac{1}{2}\frac{d}{dt}\|\boldsymbol{w}_{h}\|^{2}-c_{h}(\boldsymbol{w}_{h}, \boldsymbol{u}_{h}, \boldsymbol{w}_{h})=0,
\end{equation}
in the case where $\nu=0$ and $\boldsymbol{f}=\boldsymbol{0}$. Note that the exact vorticity $\boldsymbol{w}$ satisfies
\begin{equation}\label{3Denstrophycontinuous}
\frac{1}{2}\frac{d}{dt}\|\boldsymbol{w}\|^{2}-c(\boldsymbol{w}, \boldsymbol{u}, \boldsymbol{w})=0.
\end{equation}
There is a little difference between the continuous case \cref{3Denstrophycontinuous} and the discrete case \cref{3Denstrophy} formally.
\subsubsection{Vorticity}
Note that $(\nabla\times\boldsymbol{f},\boldsymbol{e}_{i})=(\boldsymbol{f},\nabla\times\boldsymbol{e}_{i})=0$ under \cref{compactsupport}. Set $(\boldsymbol{v}_{h},q_{h})=(\chi(\boldsymbol{e}_{i}),0)$ in \cref{curlNSEdiscrete} and we obtain
\begin{equation}\label{vorticity1}
(\frac{\partial\boldsymbol{w}_{h}}{\partial t},\boldsymbol{e}_{i})+c_{h}(\boldsymbol{u}_{h}, \boldsymbol{w}_{h}, \boldsymbol{e}_{i})-c_{h}(\boldsymbol{w}_{h}, \boldsymbol{u}_{h},\boldsymbol{e}_{i})=0.
\end{equation}
\Cref{essentialformu1} implies that
\begin{equation}\label{vorticity2}
c_{h}(\boldsymbol{u}_{h}, \boldsymbol{w}_{h}, \boldsymbol{e}_{i})=-c_{h}(\boldsymbol{u}_{h}, \boldsymbol{e}_{i}, \boldsymbol{w}_{h})=0,
\end{equation}
and
\begin{equation}\label{vorticity3}
c_{h}(\boldsymbol{w}_{h}, \boldsymbol{u}_{h}, \boldsymbol{e}_{i})=-c_{h}(\boldsymbol{w}_{h}, \boldsymbol{e}_{i}, \boldsymbol{u}_{h})=0.
\end{equation}
Substituting \cref{vorticity2} and \cref{vorticity3} into \cref{vorticity1} gives that
\begin{displaymath}
(\frac{\partial\boldsymbol{w}_{h}}{\partial t},\boldsymbol{e}_{i})=0,
\end{displaymath}
which implies that the total vorticity is conserved.
\subsection{Semi-discrete error analysis}
Denote by $\Pi_{h}^{S}: \boldsymbol{H}^{1}_{0}(\Omega)\rightarrow V_{h}^{0}$ the Stokes projection which satisfies that
\begin{equation}\label{Stokesprojection}
\left(\nabla \Pi_{h}^{S} \boldsymbol{z}, \nabla \boldsymbol{v}_{h}\right)=\left(\nabla \boldsymbol{z}, \nabla \boldsymbol{v}_{h}\right) \quad \forall~\boldsymbol{z}\in \boldsymbol{H}^{1}_{0}(\Omega), \boldsymbol{v}_{h} \in {V}_{h}^{0}.
\end{equation}
\begin{assumption}\label{assumeinfty}
The Stokes projection satisfies the following estimate
\begin{equation}\label{inftyestimate}
\left\|\nabla\Pi_{h}^{S} \boldsymbol{w}\right\|_{p} \leqslant C\left\|\nabla \boldsymbol{w}\right\|_{p}, \quad 1 \leq p \leq\infty.
\end{equation}
\end{assumption}
To satisfy the estimate \cref{inftyestimate}, some extra conditions might be required. For example, the domain $\Omega$ is convex. For the concrete analysis of \cref{inftyestimate}, we refer the readers to the paper \cite{2015Max}.
Let
$$\boldsymbol{e}=\boldsymbol{u}-\boldsymbol{u}_{h}=\boldsymbol{u}-\Pi_{h}^{S}\boldsymbol{u}
+\Pi_{h}^{S}\boldsymbol{u}-\boldsymbol{u}_{h}=\boldsymbol{\eta}+\boldsymbol{\phi}_{h},$$
and
$$\boldsymbol{e}_{\pi}=\boldsymbol{u}-\Pi_{h}\boldsymbol{u}_{h}=\boldsymbol{u}-\Pi_{h}\Pi_{h}^{S}\boldsymbol{u}
+\Pi_{h}\left(\Pi_{h}^{S}\boldsymbol{u}-\boldsymbol{u}_{h}\right)=\boldsymbol{\eta}_{\pi}+\Pi_{h}\boldsymbol{\phi}_{h}.$$

The following error equation will be used in error analysis.
\begin{equation}\label{erroreq}
\begin{aligned}
\left(\boldsymbol{e}_{t}, \boldsymbol{v}_{h}\right)+c\left(\boldsymbol{u}, \boldsymbol{u}, \boldsymbol{v}_{h}\right)&-c\left(\Pi_{h}\boldsymbol{u}_{h}, \boldsymbol{u}_{h}, \boldsymbol{v}_{h}\right)\\&+\nu\left(\nabla \boldsymbol{e}, \nabla \boldsymbol{v}_{h}\right)-b(\boldsymbol{v}_{h},p-q_{h})=0 \quad \forall~ \boldsymbol{v}_{h} \in {V}_{h}^{0}, q_{h}\in W_{h}.
\end{aligned}
\end{equation}
The above equation can be further rewritten as
\begin{equation}\label{erroreq1}
\begin{aligned}
\left(\left(\boldsymbol{\phi}_{h}\right)_{t}, \boldsymbol{v}_{h}\right)&+\nu\left(\nabla \boldsymbol{\phi}_{h}, \nabla \boldsymbol{v}_{h}\right)=-\left(\boldsymbol{\eta}_{t}, \boldsymbol{v}_{h}\right)-\left(c\left(\boldsymbol{u}, \boldsymbol{u}, \boldsymbol{v}_{h}\right)-c\left(\Pi_{h}\boldsymbol{u}_{h}, \boldsymbol{u}_{h}, \boldsymbol{v}_{h}\right)\right)\\&+b(\boldsymbol{v}_{h},p-q_{h})=0 \quad \forall~ \boldsymbol{v}_{h} \in {V}_{h}^{0}, q_{h}\in W_{h},
\end{aligned}
\end{equation}
where we use the fact that $(\nabla\boldsymbol{\eta},\boldsymbol{v}_{h})=0$.
\begin{theorem}[Semi-discrete estimate]
Let $\boldsymbol{u}_{h}$ solve \cref{EMsemidiscreteform} and $(\boldsymbol{u}, p)$ solve \cref{continuousform} with $\boldsymbol{u}_{t} \in L^{2}\left(0, T ; \boldsymbol{H}^{-1}(\Omega)\right)$, $\boldsymbol{u} \in L^{4}\left(0, T ; \boldsymbol{H}^{1}(\Omega)\right)\cap L^{2}\left(0,T ; \boldsymbol{W}^{1,\infty}(\Omega)\right)$ and $p \in L^{2}\left(0, T ; L^{2}(\Omega)\right) .$ Under \cref{assum1} and \cref{assumeinfty} it holds
\begin{equation}\label{errorestimate1}
\begin{aligned}
\|\boldsymbol{e}(T)\|^{2}&+\nu \int_{0}^{T}\|\nabla \boldsymbol{e}\|^{2} ~d t \leq \|\boldsymbol{\eta}(T)\|^{2}+\nu\|\nabla \boldsymbol{\eta}\|_{L^{2}\left(0,T ; \boldsymbol{L}^{2}\right)}^{2}
+K(\boldsymbol{u})\Bigg(\|\boldsymbol{\phi}_{h}(0)\|^{2}\left.\right.\\&\left.+\nu^{-1}\left\|\boldsymbol{\eta}_{t}\right\|_{L^{2}\left(0,T ; \boldsymbol{H}^{-1}\right)}^{2}\right.
\left.+\nu^{-1} \inf _{q_{h} \in L^{2}\left(0,T ; W_{h}\right)}\left\|p-q_{h}\right\|_{L^{2}\left(0,T ; L^{2}\right)}^{2}\right.\\
&+B(\boldsymbol{u})\left(\|\nabla\boldsymbol{\eta}\|_{L^{4}\left(0,T ; \boldsymbol{L}^{2}\right)}^{2}+\|\boldsymbol{\eta}_{\pi}\|_{L^{4}\left(0,T ; \boldsymbol{L}^{2}\right)}^{2}\right)\Bigg)
\end{aligned}
\end{equation}
with
$$
K(\boldsymbol{u})=\exp \left(C\left(\|\boldsymbol{u}\|_{L^{1}\left(0, T ; \boldsymbol{L}^{\infty}\right)}+\|\nabla \boldsymbol{u}\|_{L^{1}\left(0, T ; \boldsymbol{L}^{\infty}\right)}\right)\right),
$$
and
$$
B(\boldsymbol{u})=C\left(\|\boldsymbol{u}\|_{L^{2}\left(0, T ; \boldsymbol{L}^{\infty}\right)}+\|\nabla \boldsymbol{u}\|_{L^{2}\left(0, T ; \boldsymbol{L}^{\infty}\right)}\right),
$$
independent of $\nu^{-1}.$
\end{theorem}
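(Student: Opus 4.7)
The plan is a standard energy estimate for the projection error $\boldsymbol{\phi}_h$, exploiting the skew-symmetry of the modified convective form to pull $\nu^{-1}$ out of the Gronwall exponent under the $\boldsymbol{W}^{1,\infty}$ hypothesis on $\boldsymbol{u}$. First I would test the error equation \cref{erroreq1} with $\boldsymbol{v}_h=\boldsymbol{\phi}_h\in V_h^0$, yielding $\frac{1}{2}\frac{d}{dt}\|\boldsymbol{\phi}_h\|^2+\nu\|\nabla\boldsymbol{\phi}_h\|^2$ on the left. The linear contributions $(\boldsymbol{\eta}_t,\boldsymbol{\phi}_h)$ and $b(\boldsymbol{\phi}_h,p-q_h)$ are bounded by duality / Cauchy--Schwarz and Young's inequality, with a fraction of $\nu\|\nabla\boldsymbol{\phi}_h\|^2$ absorbed on the left; these account for the two $\nu^{-1}$-prefactored terms in \cref{errorestimate1}.

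The crucial step is to decompose the nonlinear difference $c(\boldsymbol{u},\boldsymbol{u},\boldsymbol{\phi}_h)-c(\Pi_h\boldsymbol{u}_h,\boldsymbol{u}_h,\boldsymbol{\phi}_h)$. Writing $\boldsymbol{u}_h=\boldsymbol{u}-\boldsymbol{e}$, $\Pi_h\boldsymbol{u}_h=\boldsymbol{u}-\boldsymbol{e}_\pi$ and then $\boldsymbol{e}=\boldsymbol{\eta}+\boldsymbol{\phi}_h$, $\boldsymbol{e}_\pi=\boldsymbol{\eta}_\pi+\Pi_h\boldsymbol{\phi}_h$, this difference expands into a handful of trilinear remnants. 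Three of them vanish: $c(\boldsymbol{u},\boldsymbol{\phi}_h,\boldsymbol{\phi}_h)=0$ because $\boldsymbol{u}$ is divergence-free with $\boldsymbol{u}|_\Gamma=\boldsymbol{0}$; and $c(\Pi_h\boldsymbol{\phi}_h,\boldsymbol{\phi}_h,\boldsymbol{\phi}_h)=c(\boldsymbol{\eta}_\pi,\boldsymbol{\phi}_h,\boldsymbol{\phi}_h)=0$ because both $\Pi_h\boldsymbol{\phi}_h$ and $\boldsymbol{\eta}_\pi=\boldsymbol{u}-\Pi_h\Pi_h^S\boldsymbol{u}$ lie in $\boldsymbol{H}_0(\operatorname{div};\Omega)$ with vanishing divergence (the first by \cref{assum1} \ding{192} with $\boldsymbol{\phi}_h\in V_h^0$, the second since $\Pi_h^S\boldsymbol{u}\in V_h^0$ activates \cref{assum1} \ding{192} and $\nabla\cdot\boldsymbol{u}=0$), so the continuous-level skew-symmetry underlying \cref{essentiallemma} still applies. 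Each of the remaining remnants takes the form $c(\,\cdot\,,\boldsymbol{u},\boldsymbol{\phi}_h)$, $c(\boldsymbol{u},\boldsymbol{\eta},\boldsymbol{\phi}_h)$, or $c(\,\cdot\,,\boldsymbol{\eta},\boldsymbol{\phi}_h)$ with the first slot in $\{\boldsymbol{\eta}_\pi,\Pi_h\boldsymbol{\phi}_h\}$, and is controlled by H\"older with $\|\boldsymbol{u}\|_\infty$ or $\|\nabla\boldsymbol{u}\|_\infty$ in the $L^\infty$ slot, invoking $\|\nabla\boldsymbol{\eta}\|_\infty\leq C\|\nabla\boldsymbol{u}\|_\infty$ from \cref{assumeinfty} and $\|\Pi_h\boldsymbol{\phi}_h\|\leq C\|\boldsymbol{\phi}_h\|$ from \cref{assum1} \ding{193}. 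Young's inequality splits each product into a Gronwall contribution of the shape $(\|\boldsymbol{u}\|_\infty+\|\nabla\boldsymbol{u}\|_\infty)\|\boldsymbol{\phi}_h\|^2$ and a data contribution $(\|\boldsymbol{u}\|_\infty+\|\nabla\boldsymbol{u}\|_\infty)(\|\nabla\boldsymbol{\eta}\|^2+\|\boldsymbol{\eta}_\pi\|^2)$, both entirely free of $\nu^{-1}$.

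The finale applies Gronwall's lemma with the coefficient in $L^1(0,T)$, producing the exponential factor $K(\boldsymbol{u})$; H\"older's inequality in time then converts the data contribution into $\|\boldsymbol{u}\|_{L^2(0,T;\boldsymbol{L}^\infty)}+\|\nabla\boldsymbol{u}\|_{L^2(0,T;\boldsymbol{L}^\infty)}$ multiplied by the $L^4(0,T;\boldsymbol{L}^2)$ norms of $\nabla\boldsymbol{\eta}$ and $\boldsymbol{\eta}_\pi$ appearing in $B(\boldsymbol{u})$, and a triangle inequality passes from $\boldsymbol{\phi}_h$ back to $\boldsymbol{e}=\boldsymbol{\eta}+\boldsymbol{\phi}_h$. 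The main obstacle is the careful bookkeeping of the nonlinear decomposition: one must check that every surviving remnant admits a bound whose worst velocity factor is $\|\boldsymbol{u}\|_\infty$ or $\|\nabla\boldsymbol{u}\|_\infty$ rather than $\|\nabla\boldsymbol{\phi}_h\|$, since any appearance of the latter would force a $\nu^{-1}$-weighted Young split and contaminate the Gronwall exponent. This is precisely where the divergence-free reconstruction earns its keep over SKEW: replacing $\boldsymbol{u}_h$ with $\Pi_h\boldsymbol{u}_h$ in the first argument removes the divergence-residual trilinear pieces that would otherwise introduce a Reynolds-dependent constant into the exponential factor.
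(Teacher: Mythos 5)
Your proposal is correct and follows essentially the same route as the paper: test \cref{erroreq1} with $\boldsymbol{\phi}_h$, kill the dangerous quadratic terms via the skew-symmetry granted by \cref{assum1} \ding{192} and \cref{essentiallemma}, bound the surviving remnants with $\|\boldsymbol{u}\|_{\infty}$ and $\|\nabla\boldsymbol{u}\|_{\infty}$ (using \cref{assum1} \ding{193} and \cref{assumeinfty}), and finish with Gronwall, H\"older in time, and the triangle inequality. The only cosmetic difference is that you expand the nonlinear difference directly around $\boldsymbol{u}$ (yielding the extra, equally harmless terms $c(\boldsymbol{\eta}_\pi,\boldsymbol{\eta},\boldsymbol{\phi}_h)$ and $c(\Pi_h\boldsymbol{\phi}_h,\boldsymbol{\eta},\boldsymbol{\phi}_h)$, controlled via $\|\nabla\boldsymbol{\eta}\|_{\infty}\leq C\|\nabla\boldsymbol{u}\|_{\infty}$), whereas the paper inserts $\Pi_h^S\boldsymbol{u}$ in the middle slot and bounds $\|\nabla\Pi_h^S\boldsymbol{u}\|_{\infty}$ instead; both uses of \cref{assumeinfty} are equivalent and lead to the same $K(\boldsymbol{u})$ and $B(\boldsymbol{u})$.
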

\begin{proof}
Set $\boldsymbol{v}_{h}=\boldsymbol{\phi}_{h}$ in \cref{erroreq1} and we arrive at
\begin{equation}\label{erroreq2}
\begin{aligned}
\frac{1}{2}\frac{d}{dt}||\boldsymbol{\phi}_{h}||^{2}+&\nu||\nabla\boldsymbol{\phi}_{h}||^{2}=
-(\boldsymbol{\eta}_{t},\boldsymbol{\phi}_{h})-\\
&\left(c(\boldsymbol{u},\boldsymbol{u},\boldsymbol{\phi}_{h})
-c(\Pi_{h}\boldsymbol{u}_{h},\boldsymbol{u}_{h},\boldsymbol{\phi}_{h})\right)+b(\phi_{h},p-q_{h}).
\end{aligned}
\end{equation}
Applying the Cauchy-Schwarz inequality and the weighted Young's inequality one obtains
\begin{equation}\label{errorieq1}
\left|\left(\boldsymbol{\eta}_{t}, \boldsymbol{\phi}_{h}\right)\right| \leq  \nu^{-1}\left\|\boldsymbol{\eta}_{t}\right\|_{\boldsymbol{H}^{-1}}^{2}+\frac{\nu}{4}\left\|\nabla \boldsymbol{\phi}_{h}\right\|^{2},
\end{equation}
\begin{equation}\label{errorieq2}
\left|b\left(\boldsymbol{\phi}_{h},p-q_{h}\right)\right| \leq C \nu^{-1}\left\|p-q_{h}\right\|^{2}+\frac{\nu}{4}\left\|\nabla \boldsymbol{\phi}_{h}\right\|^{2}.
\end{equation}
To analyze the nonlinear terms, here we use a similar splitting way as \cite{schroeder_towards_2018,Rebholz2020}. We split the difference of the two nonlinear terms as
\begin{equation}\label{errorieq3}
c(\boldsymbol{u},\boldsymbol{u},\boldsymbol{\phi}_{h})
-c(\Pi_{h}\boldsymbol{u}_{h},\boldsymbol{u}_{h},\boldsymbol{\phi}_{h})=c(\boldsymbol{u},\boldsymbol{\eta},\boldsymbol{\phi}_{h})
+c(\boldsymbol{u},\Pi_{h}^{S}\boldsymbol{u},\boldsymbol{\phi}_{h})
-c(\Pi_{h}\boldsymbol{u}_{h},\boldsymbol{u}_{h},\boldsymbol{\phi}_{h}).
\end{equation}
Further,
\begin{equation}\label{errorieq4}
\begin{aligned}
c(\boldsymbol{u},\Pi_{h}^{S}\boldsymbol{u},\boldsymbol{\phi}_{h})
-c(\Pi_{h}\boldsymbol{u}_{h},\boldsymbol{u}_{h},\boldsymbol{\phi}_{h})&=
c(\boldsymbol{e}_{\pi},\Pi_{h}^{S}\boldsymbol{u},\boldsymbol{\phi}_{h})
+c(\Pi_{h}\boldsymbol{u}_{h},\Pi_{h}^{S}\boldsymbol{u},\boldsymbol{\phi}_{h})\\
&-c(\Pi_{h}\boldsymbol{u}_{h},\boldsymbol{u}_{h},\boldsymbol{\phi}_{h})\\&=
c(\boldsymbol{e}_{\pi},\Pi_{h}^{S}\boldsymbol{u},\boldsymbol{\phi}_{h})
+c(\Pi_{h}\boldsymbol{u}_{h},\boldsymbol{\phi}_{h},\boldsymbol{\phi}_{h})\\&=
c(\boldsymbol{e}_{\pi},\Pi_{h}^{S}\boldsymbol{u},\boldsymbol{\phi}_{h})\\
& = c(\boldsymbol{\eta}_{\pi},\Pi_{h}^{S}\boldsymbol{u},\boldsymbol{\phi}_{h})+
c(\Pi_{h}\boldsymbol{\phi}_{h},\Pi_{h}^{S}\boldsymbol{u},\boldsymbol{\phi}_{h}).
\end{aligned}
\end{equation}
Then the Cauchy-Schwarz inequality, \Cref{inftyestimate} and the Young's inequality give that
\begin{equation}\label{errorieq5}
|c(\boldsymbol{u},\boldsymbol{\eta},\boldsymbol{\phi}_{h})|\leq  ||\boldsymbol{u}||_{\infty}||||\nabla\boldsymbol{\eta}||
||\boldsymbol{\phi}_{h}||
\leq C ||\boldsymbol{u}||_{\infty}(||\nabla\boldsymbol{\eta}||^{2}+||\boldsymbol{\phi}_{h}||^{2}),
\end{equation}
\begin{equation}\label{errorieq6}
|c(\boldsymbol{\eta}_{\pi},\Pi_{h}^{S}\boldsymbol{u},\boldsymbol{\phi}_{h})|\leq
||\boldsymbol{\eta}_{\pi}||||\nabla\Pi_{h}^{S}\boldsymbol{u}||_{\infty}||\boldsymbol{\phi}_{h}||
\leq C ||\nabla\boldsymbol{u}||_{\infty}
(||\boldsymbol{\eta}_{\pi}||^{2}+||\boldsymbol{\phi}_{h}||^{2}),
\end{equation}
and
\begin{equation}\label{errorieq7}
\begin{aligned}
|c(\Pi_{h}\boldsymbol{\phi}_{h},\Pi_{h}^{S}\boldsymbol{u},\boldsymbol{\phi}_{h})|\leq
||\Pi_{h}\boldsymbol{\phi}_{h}||||\nabla\Pi_{h}^{S}\boldsymbol{u}||_{\infty}||\boldsymbol{\phi}_{h}||
\leq C ||\nabla\boldsymbol{u}||_{\infty}||\boldsymbol{\phi}_{h}||^{2},
\end{aligned}
\end{equation}
where in the last inequality we also use \cref{assum1}.
Substituting \cref{errorieq5,errorieq6,errorieq7} into \cref{errorieq3,errorieq4} provides
\begin{equation}\nonumber
\begin{array}{l}
\left|c\left(\boldsymbol{u}, \boldsymbol{u}, \boldsymbol{\phi}_{h}\right)-c\left(\Pi_{h}\boldsymbol{u}_{h}, \boldsymbol{u}_{h}, \boldsymbol{\phi}_{h}\right)\right| \\
\quad \leq C\left(\|\boldsymbol{u}\|_{{\infty}}\|\nabla \boldsymbol{\eta}\|^{2}+\|\nabla \boldsymbol{u}\|_{{\infty}}\|\boldsymbol{\eta}_{\pi}\|^{2}+\left[\|\boldsymbol{u}\|_{{\infty}}+\|\nabla \boldsymbol{u}\|_{{\infty}}\right]\left\|\boldsymbol{\phi}_{h}\right\|^{2}\right),
\end{array}
\end{equation}
which, together with \cref{errorieq1,errorieq2}, gives that
\begin{equation}
\begin{array}{l}
\frac{1}{2}\frac{d}{d t}\left\|\boldsymbol{\phi}_{h}\right\|^{2}+\frac{\nu}{2}\left\|\nabla \boldsymbol{\phi}_{h}\right\|^{2} \leq  \nu^{-1}\left\|\boldsymbol{\eta}_{t}\right\|_{\boldsymbol{H}^{-1}}^{2}+C \nu^{-1}\left\|p-q_{h}\right\|^{2} \\
\quad+C \left(||\boldsymbol{u}||_{\infty}||\nabla\boldsymbol{\eta}||^{2}+||\nabla\boldsymbol{u}||_{\infty}||\boldsymbol{\eta}_{\pi}||^{2}\right)
+C (||\boldsymbol{u}||_{\infty}+||\nabla\boldsymbol{u}||_{\infty})||\boldsymbol{\phi}_{h}||^{2}.
\end{array}
\end{equation}
Then integrating over $(0,T]$ and using the Gronwall inequality and H{\"o}lder inequality we finally obtain
\begin{equation}\label{errorestimate2}
\begin{aligned}
\|\boldsymbol{\phi}_{h}(T)\|^{2}+&\nu \int_{0}^{T}\|\nabla \boldsymbol{\phi}_{h}\|^{2} ~d t \leq K(\boldsymbol{u})\left(\|\boldsymbol{\phi}_{h}(0)\|^{2}+\nu^{-1}\left\|\boldsymbol{\eta}_{t}\right\|_{L^{2}\left(0,T ; \boldsymbol{H}^{-1}\right)}^{2}\right.\\
&\left.+\nu^{-1} \inf _{q_{h} \in L^{2}\left(0,T ; W_{h}\right)}\left\|p-q_{h}\right\|_{L^{2}\left(0,T ; L^{2}\right)}^{2}\right.\\
&+B(\boldsymbol{u})\left(\|\nabla\boldsymbol{\eta}\|_{L^{4}\left(0,T ; \boldsymbol{L}^{2}\right)}^{2}+\|\boldsymbol{\eta}_{\pi}\|_{L^{4}\left(0,T ; \boldsymbol{L}^{2}\right)}^{2}\right)\Big).
\end{aligned}
\end{equation}
Finally, the estimate \cref{errorestimate1} follows immediately from \cref{errorestimate2} and the triangle inequality.
\end{proof}
\section{The Picard linearization scheme}
\label{sec:3}
In practice, a commonly used linearization way is replacing one of the velocity solutions of the nonlinear term with last time step solutions. In this section we prove that this way preserves all the conservative properties from the semi-discrete version when matching the Crank-Nicolson time discretizations. The linearized Crank-Nicolson scheme is that
\begin{equation}\label{EMfullydiscreteform}
\begin{aligned}
\text{Given} \ \boldsymbol{u}_{h}^{pre},\boldsymbol{u}_{h}^{n}\in V_{h} \text { and } p_{h}^{n}\in W_{h}, \ \text{find} &\ (\boldsymbol{u}_{h}^{n+1},p_{h}^{n+1})\in V_{h}\times W_{h}\  \text{such that }\\
(\frac{\boldsymbol{u}_{h}^{n+1}-\boldsymbol{u}_{h}^{n}}{\Delta t},\boldsymbol{v}_{h})+\nu a(\boldsymbol{u}_{h}^{n+\frac{1}{2}}, \boldsymbol{v}_{h})&+c_{h}(\boldsymbol{u}_{h}^{pre},\boldsymbol{u}_{h}^{n+\frac{1}{2}},\boldsymbol{v}_{h})\\
-b(\boldsymbol{v}_{h}, p_{h}^{n+\frac{1}{2}})&+b(\boldsymbol{u}_{h}^{n+1}, q_{h})=(\boldsymbol{f}^{n+\frac{1}{2}}, \boldsymbol{v}_{h})
\end{aligned}
\end{equation}
for all $(\boldsymbol{v}_{h},q_{h})\in V_{h}\times W_{h}$, where $\boldsymbol{u}_{h}^{n+\frac{1}{2}}=\frac{1}{2}(\boldsymbol{u}_{h}^{n+1}+\boldsymbol{u}_{h}^{n})$ and $p_{h}^{n+\frac{1}{2}}=\frac{1}{2}(p_{h}^{n}+p_{h}^{n+1})$. One could choose $\boldsymbol{u}_{h}^{pre}=\frac{3}{2}\boldsymbol{u}_{h}^{n}-\frac{1}{2}\boldsymbol{u}_{h}^{n-1}$ for the first step of the Picard iteration. This one-step Picard lineatization is also called extrapolated Crank-Nicolson scheme \cite{layton2008} sometimes.
One fully discrete scheme for the vorticity equation \cref{curlNSE} reads:
\begin{equation}\label{curlNSEfullydiscrete}
\begin{aligned}
\left(\frac{\boldsymbol{w}_{h}^{n+1}-\boldsymbol{w}_{h}^{n}}{\Delta t}, \boldsymbol{v}_{h}\right)+&c_{h}(\boldsymbol{u}_{h}^{pre}, \boldsymbol{w}_{h}^{n+\frac{1}{2}}, \boldsymbol{v}_{h})-c_{h}(\boldsymbol{w}_{h}^{n+\frac{1}{2}}, \boldsymbol{u}_{h}^{n+\frac{1}{2}}, \boldsymbol{v}_{h})+\\ \nu(\nabla \boldsymbol{w}_{h}^{n+\frac{1}{2}}, \nabla \boldsymbol{v}_{h})-&b( \boldsymbol{v}_{h},\eta_{h}^{n+\frac{1}{2}})+b(\boldsymbol{w}_{h}^{n+1},q_{h})=(\nabla\times \boldsymbol{f}^{n+\frac{1}{2}}, \boldsymbol{v}_{h}).
\end{aligned}
\end{equation}
\begin{theorem}
In the case $\nu=0$ and $\boldsymbol{f}=\boldsymbol{0}$, the linearized method \cref{EMfullydiscreteform} conserves the kinetic energy under \cref{assum1} \ding{192}, that is,
\begin{equation}\label{energyconser1}
||\boldsymbol{u}_{h}^{n}||^{2}=\|\boldsymbol{u}_{h}({0})\|^{2} \quad \text{for all} ~n.
\end{equation}
\end{theorem}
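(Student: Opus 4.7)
The plan is to test the fully discrete scheme \cref{EMfullydiscreteform} with $\boldsymbol{v}_h = \boldsymbol{u}_h^{n+1/2}$ and $q_h = p_h^{n+1/2}$, which is the natural discrete analog of the Galerkin orthogonality argument used for the semi-discrete energy identity \cref{energyconser}. The telescoping identity
\begin{equation}\nonumber
\left(\frac{\boldsymbol{u}_h^{n+1}-\boldsymbol{u}_h^{n}}{\Delta t},\boldsymbol{u}_h^{n+1/2}\right)=\frac{1}{2\Delta t}\bigl(\|\boldsymbol{u}_h^{n+1}\|^{2}-\|\boldsymbol{u}_h^{n}\|^{2}\bigr)
\end{equation}
turns the time-derivative term into a clean jump in kinetic energy. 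Since $\nu=0$ and $\boldsymbol{f}=\boldsymbol{0}$, the viscous and forcing contributions drop out immediately.

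The next step is to show that the convective contribution $c_h(\boldsymbol{u}_h^{pre},\boldsymbol{u}_h^{n+1/2},\boldsymbol{u}_h^{n+1/2})$ vanishes. By \cref{essentialformu1} this holds as soon as $\boldsymbol{u}_h^{pre}\in V_h^{0}$. I would proceed by induction on $n$: the mass-conservation line $b(\boldsymbol{u}_h^{n+1},q_h)=0$ built into \cref{EMfullydiscreteform} forces every computed iterate into $V_h^{0}$, and any Picard predictor (such as $\boldsymbol{u}_h^{pre}=\tfrac{3}{2}\boldsymbol{u}_h^{n}-\tfrac{1}{2}\boldsymbol{u}_h^{n-1}$) is a linear combination of previous discretely divergence-free velocities, hence also belongs to $V_h^{0}$. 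The skew-symmetric identity then kills the trilinear form exactly.

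It remains to handle the pressure contributions. Testing produces
\begin{equation}\nonumber
-b(\boldsymbol{u}_h^{n+1/2},p_h^{n+1/2})+b(\boldsymbol{u}_h^{n+1},p_h^{n+1/2})=\tfrac{1}{2}b(\boldsymbol{u}_h^{n+1}-\boldsymbol{u}_h^{n},p_h^{n+1/2}),
\end{equation}
which vanishes because both $\boldsymbol{u}_h^{n+1}$ and $\boldsymbol{u}_h^{n}$ are discretely divergence-free and $p_h^{n+1/2}\in W_h$. Putting the three ingredients together gives $\|\boldsymbol{u}_h^{n+1}\|^{2}=\|\boldsymbol{u}_h^{n}\|^{2}$ at every step, and iterating yields \cref{energyconser1}.

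The only subtle point — minor but easy to overlook — is the bookkeeping on $\boldsymbol{u}_h^{pre}$: one must verify that whatever Picard predictor is plugged into $c_h(\cdot,\cdot,\cdot)$ genuinely sits inside $V_h^{0}$, since otherwise the skew-symmetry identity \cref{essentialformu1} is unavailable and energy conservation could be destroyed. For the standard extrapolation choice this is immediate from linearity of $V_h^{0}$, so the main obstacle is really just notational rather than analytic; beyond that, the argument is a direct discrete transcription of the semi-discrete proof.
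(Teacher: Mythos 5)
Your proposal is correct and follows essentially the same route as the paper: test \cref{EMfullydiscreteform} with $\boldsymbol{v}_h=\boldsymbol{u}_h^{n+\frac{1}{2}}$, use the telescoping identity for the time-difference term, and invoke the skew-symmetry \cref{essentialformu1} to annihilate $c_h(\boldsymbol{u}_h^{pre},\boldsymbol{u}_h^{n+\frac{1}{2}},\boldsymbol{u}_h^{n+\frac{1}{2}})$. The only difference is that you make explicit the bookkeeping the paper leaves implicit, namely that the pressure contributions cancel and that $\boldsymbol{u}_h^{pre}$ lies in $V_h^{0}$ (by linearity of $V_h^{0}$ and the constraint $b(\boldsymbol{u}_h^{n+1},q_h)=0$), which is a welcome but non-essential refinement.
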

\begin{proof}
Taking $\boldsymbol{v}_{h}=\boldsymbol{u}_{h}^{n+1/2}$ in \cref{EMfullydiscreteform} and considering the case $\nu=0$ and $\boldsymbol{f}=\boldsymbol{0}$, we get
\begin{equation}\label{discreteenergy}
\begin{aligned}
\frac{1}{2\Delta t}\left(\|\boldsymbol{u}_{h}^{n+1}\|^{2}-\|\boldsymbol{u}_{h}^{n}\|^{2}\right)&+
c_{h}(\boldsymbol{u}_{h}^{pre},\boldsymbol{u}_{h}^{n+\frac{1}{2}},\boldsymbol{u}_{h}^{n+\frac{1}{2}})\\
&=\frac{1}{2\Delta t}\left(\|\boldsymbol{u}_{h}^{n+1}\|^{2}-\|\boldsymbol{u}_{h}^{n}\|^{2}\right)=0,
\end{aligned}
\end{equation}
which implies that
\begin{displaymath}
\|\boldsymbol{u}_{h}^{n+1}\|^{2}=\|\boldsymbol{u}_{h}^{n}\|^{2}\quad \text{for all } n.
\end{displaymath}
Then \cref{energyconser1} follows.
\end{proof}
\begin{theorem}\label{mhevconser2}
Under \cref{assum1} \ding{192} and the fully discrete version of \cref{compactsupport}, the methods \cref{EMfullydiscreteform}-\cref{curlNSEfullydiscrete} conserve momentum (for $\boldsymbol{f}$ with zero linear momentum), helicity (for $\nu=0$ and $\boldsymbol{f}=\boldsymbol{0}$), 2D enstrophy (for $\nu=0$ and $\boldsymbol{f}=\boldsymbol{0}$), and total vorticity.
\end{theorem}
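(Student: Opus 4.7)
The plan is to mirror the four sub-proofs used for the semi-discrete \cref{mhevconser} (momentum, helicity, 2D enstrophy, total vorticity), replacing the time derivative by a difference quotient and exploiting the standard Crank--Nicolson midpoint identity. The first observation is that every discrete velocity $\boldsymbol{u}_{h}^{n}$ produced by \cref{EMfullydiscreteform} satisfies $b(\boldsymbol{u}_{h}^{n},q_{h})=0$, so the extrapolated advective velocity $\boldsymbol{u}_{h}^{pre}=\tfrac{3}{2}\boldsymbol{u}_{h}^{n}-\tfrac{1}{2}\boldsymbol{u}_{h}^{n-1}$ lies in $V_{h}^{0}$; hence \cref{assum1} \ding{192} gives $\nabla\cdot\Pi_{h}\boldsymbol{u}_{h}^{pre}\equiv 0$, and \cref{essentialformu1} applies whenever $\boldsymbol{u}_{h}^{pre}$ sits in the first slot of $c_{h}$. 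The same remark applies to $\boldsymbol{w}_{h}^{n+\frac{1}{2}}$, since \cref{curlNSEfullydiscrete} enforces $b(\boldsymbol{w}_{h}^{n+1},q_{h})=0$ and hence $\boldsymbol{w}_{h}^{n+\frac{1}{2}}\in V_{h}^{0}$.

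For momentum, I would test \cref{EMfullydiscreteform} with $(\boldsymbol{v}_{h},q_{h})=(\chi(\boldsymbol{e}_{i}),0)$. The viscous and pressure contributions vanish because $\nabla\boldsymbol{e}_{i}=\boldsymbol{0}$ and $\nabla\cdot\boldsymbol{e}_{i}=0$, while \cref{essentialformu1} yields $c_{h}(\boldsymbol{u}_{h}^{pre},\boldsymbol{u}_{h}^{n+\frac{1}{2}},\boldsymbol{e}_{i})=-c_{h}(\boldsymbol{u}_{h}^{pre},\boldsymbol{e}_{i},\boldsymbol{u}_{h}^{n+\frac{1}{2}})=0$. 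What remains is $\Delta t^{-1}(\boldsymbol{u}_{h}^{n+1}-\boldsymbol{u}_{h}^{n},\boldsymbol{e}_{i})=(\boldsymbol{f}^{n+\frac{1}{2}},\boldsymbol{e}_{i})=0$, giving momentum conservation step by step. Testing \cref{curlNSEfullydiscrete} with $(\chi(\boldsymbol{e}_{i}),0)$ similarly annihilates both trilinear terms by \cref{essentialformu1} (again using $\nabla\boldsymbol{e}_{i}=\boldsymbol{0}$), so $(\boldsymbol{w}_{h}^{n+1}-\boldsymbol{w}_{h}^{n},\boldsymbol{e}_{i})=0$ and total vorticity is preserved.

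For helicity, I would take $(\boldsymbol{v}_{h},q_{h})=(\boldsymbol{w}_{h}^{n+\frac{1}{2}},0)$ in \cref{EMfullydiscreteform} and $(\boldsymbol{v}_{h},q_{h})=(\boldsymbol{u}_{h}^{n+\frac{1}{2}},0)$ in \cref{curlNSEfullydiscrete} and add the results. The step I expect to be the main obstacle is the Crank--Nicolson telescoping of the two time-difference terms: a short bilinear expansion shows
\begin{equation*}
\tfrac{1}{\Delta t}(\boldsymbol{u}_{h}^{n+1}-\boldsymbol{u}_{h}^{n},\boldsymbol{w}_{h}^{n+\frac{1}{2}})+\tfrac{1}{\Delta t}(\boldsymbol{w}_{h}^{n+1}-\boldsymbol{w}_{h}^{n},\boldsymbol{u}_{h}^{n+\frac{1}{2}})=\tfrac{1}{\Delta t}\bigl[(\boldsymbol{u}_{h}^{n+1},\boldsymbol{w}_{h}^{n+1})-(\boldsymbol{u}_{h}^{n},\boldsymbol{w}_{h}^{n})\bigr],
\end{equation*}
which is the fully discrete analogue of $\frac{d}{dt}(\boldsymbol{u}_{h},\boldsymbol{w}_{h})$. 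The three surviving trilinear contributions are $c_{h}(\boldsymbol{u}_{h}^{pre},\boldsymbol{u}_{h}^{n+\frac{1}{2}},\boldsymbol{w}_{h}^{n+\frac{1}{2}})$, $c_{h}(\boldsymbol{u}_{h}^{pre},\boldsymbol{w}_{h}^{n+\frac{1}{2}},\boldsymbol{u}_{h}^{n+\frac{1}{2}})$, and $-c_{h}(\boldsymbol{w}_{h}^{n+\frac{1}{2}},\boldsymbol{u}_{h}^{n+\frac{1}{2}},\boldsymbol{u}_{h}^{n+\frac{1}{2}})$; the first two cancel by \cref{essentialformu1} (with $\boldsymbol{u}_{h}^{pre}\in V_{h}^{0}$), and the third is zero by \cref{essentialformu1} applied to $\boldsymbol{w}_{h}^{n+\frac{1}{2}}\in V_{h}^{0}$. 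Hence $(\boldsymbol{u}_{h}^{n+1},\boldsymbol{w}_{h}^{n+1})=(\boldsymbol{u}_{h}^{n},\boldsymbol{w}_{h}^{n})$ when $\nu=0$ and $\boldsymbol{f}=\boldsymbol{0}$.

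Finally, for 2D enstrophy I would write down the Crank--Nicolson version of \cref{2DcurlNSEdiscrete} (with $\boldsymbol{u}_{h}^{pre}$ as advective velocity and $w_{h}^{n+\frac{1}{2}}$ as trial argument) and test with $v_{h}=w_{h}^{n+\frac{1}{2}}$. The advection term $((\Pi_{h}\boldsymbol{u}_{h}^{pre}\cdot\nabla)w_{h}^{n+\frac{1}{2}},w_{h}^{n+\frac{1}{2}})$ vanishes by integration by parts using $\nabla\cdot\Pi_{h}\boldsymbol{u}_{h}^{pre}\equiv 0$ and $\Pi_{h}\boldsymbol{u}_{h}^{pre}\cdot\boldsymbol{n}=0$ on $\Gamma$, so a telescoping identity identical to \cref{discreteenergy} yields $\|w_{h}^{n+1}\|^{2}=\|w_{h}^{n}\|^{2}$. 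Combining these four arguments gives the theorem; the only genuinely new ingredient beyond the semi-discrete proof is the Crank--Nicolson telescoping used in the helicity step.
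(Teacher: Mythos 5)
Your proposal is correct and follows essentially the same route as the paper: test the momentum equation with $\chi(\boldsymbol{e}_{i})$, the (2D) vorticity equation with $\chi(\boldsymbol{e}_{i})$ and $w_{h}^{n+\frac{1}{2}}$, and for helicity test \cref{EMfullydiscreteform} with $\boldsymbol{w}_{h}^{n+\frac{1}{2}}$ and \cref{curlNSEfullydiscrete} with $\boldsymbol{u}_{h}^{n+\frac{1}{2}}$, add, cancel the trilinear terms via \cref{essentialformu1}, and telescope. You even spell out two points the paper leaves implicit (that $\boldsymbol{u}_{h}^{pre}\in V_{h}^{0}$ so \cref{assum1} \ding{192} applies, and the Crank--Nicolson bilinear expansion behind the ``simple calculations''), which is fine.
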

\begin{proof}
The conservation of momentum, 2D enstrophy and vorticity follow immediately from \cref{essentiallemma} and respectively taking $\boldsymbol{v}_{h}=\chi(\boldsymbol{e}_{i})$, $v_{h}=w_{h}^{{n+\frac{1}{2}}}$ and $\boldsymbol{v}_{h}=\chi(\boldsymbol{e}_{i})$ in \cref{EMfullydiscreteform} or \cref{curlNSEfullydiscrete}. We mainly prove the conservation of helicity.

Suppose that $\nu=0$ and $\boldsymbol{f}=\boldsymbol{0}$. Testing with $(\boldsymbol{v}_{h},q_{h})=(\boldsymbol{w}_{h}^{n+\frac{1}{2}},0)$ in \cref{EMfullydiscreteform} gives
\begin{equation}\label{Fhelicity1}
\left(\frac{\boldsymbol{u}_{h}^{n+1}-\boldsymbol{u}_{h}^{n}}{\Delta t}, \boldsymbol{w}_{h}^{n+\frac{1}{2}}\right)+c_{h}\left(\boldsymbol{u}_{h}^{pre},\boldsymbol{u}_{h}^{n+\frac{1}{2}}, \boldsymbol{w}_{h}^{n+\frac{1}{2}}\right)=0.
\end{equation}
Meanwhile, testing with $(\boldsymbol{v}_{h},q_{h})=(\boldsymbol{u}_{h}^{n+\frac{1}{2}},0)$ in \cref{curlNSEfullydiscrete} gives
\begin{equation}\label{Fhelicity2}
\left(\frac{\boldsymbol{w}_{h}^{n+1}-\boldsymbol{w}_{h}^{n}}{\Delta t}, \boldsymbol{u}_{h}^{n+\frac{1}{2}}\right)+c_{h}(\boldsymbol{u}_{h}^{pre}, \boldsymbol{w}_{h}^{n+\frac{1}{2}}, \boldsymbol{u}_{h}^{n+\frac{1}{2}})-c_{h}(\boldsymbol{w}_{h}^{n+\frac{1}{2}}, \boldsymbol{u}_{h}^{n+\frac{1}{2}}, \boldsymbol{u}_{h}^{n+\frac{1}{2}})=0.
\end{equation}
Summing \cref{Fhelicity1} and \cref{Fhelicity2} and using \cref{essentialformu1}, one arrives at
\begin{equation}
\frac{1}{\Delta t}\left\{\left(\boldsymbol{u}_{h}^{n+1},\boldsymbol{w}_{h}^{n+1}\right)-\left(\boldsymbol{u}_{h}^{n},\boldsymbol{w}_{h}^{n}\right)\right\}=0
\end{equation}
by some simple calculations. Thus the proof is completed.
\end{proof}
\section{Numerical experiments}
\label{sec:4}
\subsection{The choice of the divergence-free reconstruction operator}
For all examples below, the modified convective scheme (``MOD CONV" in legends below) is computed by one-step Picard iteration. We test two commonly used simplicial elements: one is the second order Taylor-Hood element ($P_{2}/P_{1}$), the other is a second order locally divergence-free element, $P_{2}^{bubble}/P_{1}^{disc}$ \cite[pp. 139-144]{girault_finite_1986}. The Taylor-Hood element has continuous pressure and thus is only globally mass conservative. To obtain an exactly divergence-free approximation of the Taylor-Hood velocity, we first seek a locally divergence-free approximation by projecting (under $L^{2}$ or Stokes sense) the velocity to the discretely divergence-free Bernardi-Raugel subspace \cite{bernardi_analysis_1985}. This is equal to solving a Darcy or Stokes system. Then for Bernardi-Raugel pair and $P_{2}^{bubble}/P_{1}^{disc}$ we apply the reconstruction operators in \cite[Remark 4.2]{linke_pressure-robustness_2016} to get an exactly divergence-free approximation. A class of much cheaper reconstruction operators for Taylor-Hood elements can be found in \cite{Gmeiner2014,Linke2017}.

We also show some results by SKEW, CONV and EMAC. For Taylor-Hood element, to do a fair comparison, the SKEW, CONV and EMAC are linearized by two-step Picard or other (for EMAC) iterations, since the reconstruction  requires solving an extra global system in modified CONV. For $P_{2}^{bubble}/P_{1}^{disc}$ pair, all formulations are computed by one-step linearization. The linearized EMAC scheme used here can be found in \cite{EMAC2019}, which is a skew-symmetric (energy-conserving) scheme but does not conserve the momentum and angular momentum. This scheme sometimes is unable to show the advantages of EMAC. For this reason, we also give some results from the nonlinear EMAC scheme (solved by Newton iterations) as a reference.
\subsection{Example 1: Gresho problem}
For the first example we study the Gresho problem \cite{Gresho1990,Rebholz2017}, which is a Euler equation ($\nu=0$) with zero external force ($\boldsymbol{f}=\boldsymbol{0}$). The exact velocity solution is constant-in-time and prescribed as
\begin{subequations}\nonumber
\begin{align}
r &\leq 0.2:
\boldsymbol{u}=(-5 y ,5 x)^{\top}, \\
0.2 < r &\leq 0.4:
\boldsymbol{u}=(-\frac{2 y}{r}+5 y,
\frac{2 x}{r}-5 x)^{\top}, \\
 r & > 0.4:
\boldsymbol{u}=
(0,
0)^{\top}
\end{align}
\end{subequations}
where $r=\sqrt{x^{2}+y^{2}}$, $\Omega=(-0.5,0.5)^{2}$.
We choose the exact solution as the initial condition and apply the no-penetration boundary condition, with $T=10$. \cref{Greshofig3} is a speed contour plot of the exact solution. An accurate scheme should preserve the structure in a long time. This problem is a good benchmark to test the conservative properties of our schemes since it almost satisfies all the assumptions in our analysis. The numerical test is built on the uniform $48\times48$ triangular mesh. For time discretizations, we apply the Crank-Nicolson scheme with a time step $\Delta t=0.01$. Some results can be found in \cref{Greshofig1} and \cref{Greshofig2}. Here the ``momentum" represents the sum of the components of momentum. The convective formulation blows up very quickly after $t=2$ and $t=3$ for $P_{2}/P_{1}$ and $P_{2}^{bubble}/P_{1}^{disc}$, respectively, while the modified convective formulation stays stable up to $t=10$. The speed contour plots of the discrete velocity for $P_{2}/P_{1}$ ($t=2$) and $P_{2}^{bubble}/P_{1}^{disc}$ ($t=3$) are shown in \cref{Greshofig4} and \cref{Greshofig5}, respectively.

In all cases shown below, the modified convective formulation gives the best performance. It is worth mentioning that, although we have analytically proven that modified CONV does not conserve angular momentum, its numerical performance is even as good as EMAC on this aspect. Further, the sub-figures in \cref{Greshofig4} and \cref{Greshofig5} demonstrate that the modified convective formulation does preserve the structure best. Although energy-conserving, SKEW and skew-linearized EMAC give a low accuracy and do not preserve angular momentum.
\begin{figure}[htbp]
\centering
\includegraphics[width=0.48\textwidth]{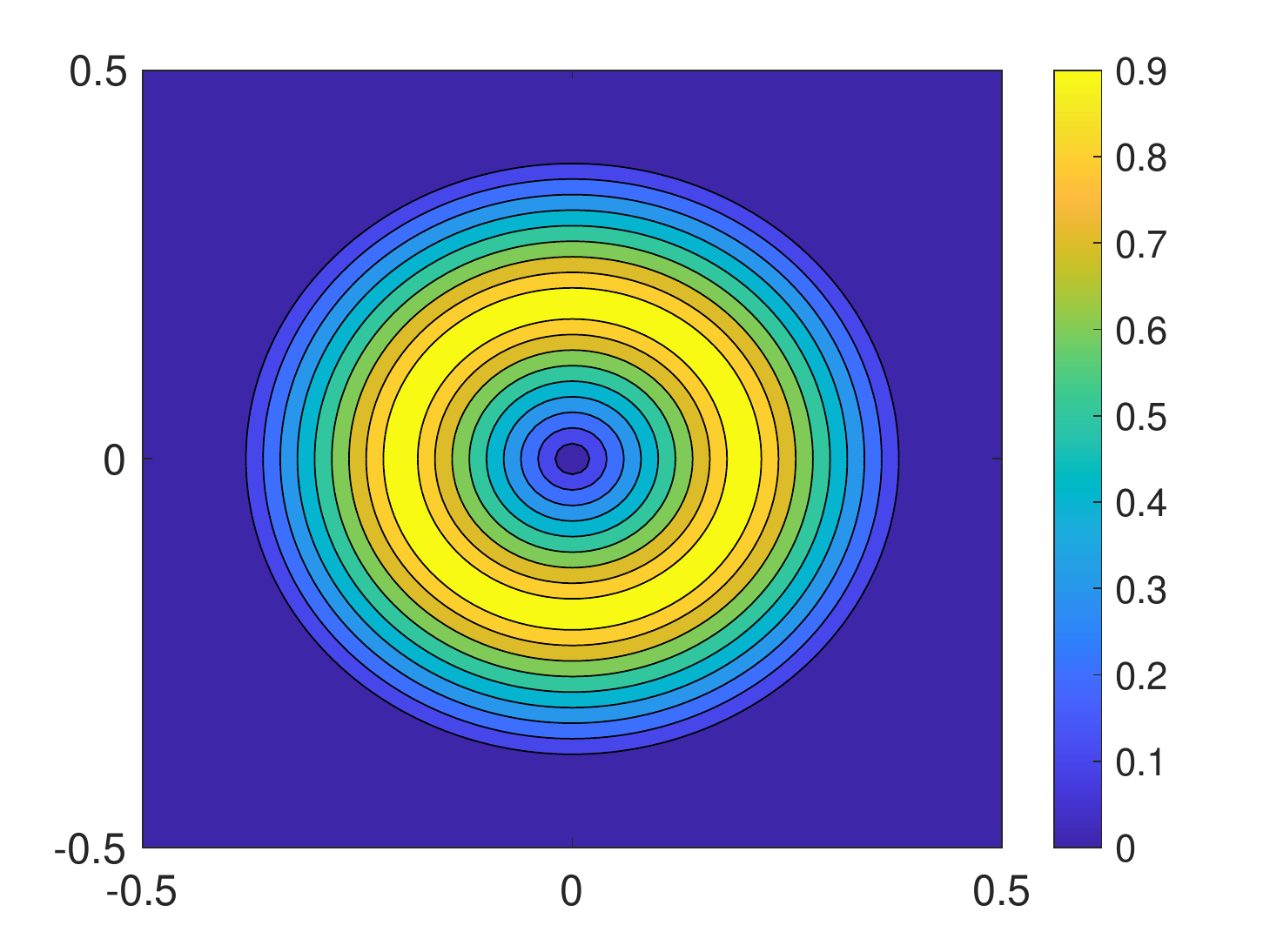}
\caption{Example 1. Plot of exact speed contour.}
\label{Greshofig3}
\end{figure}
\begin{figure}[htbp]
\centering
\includegraphics[width=0.48\textwidth]{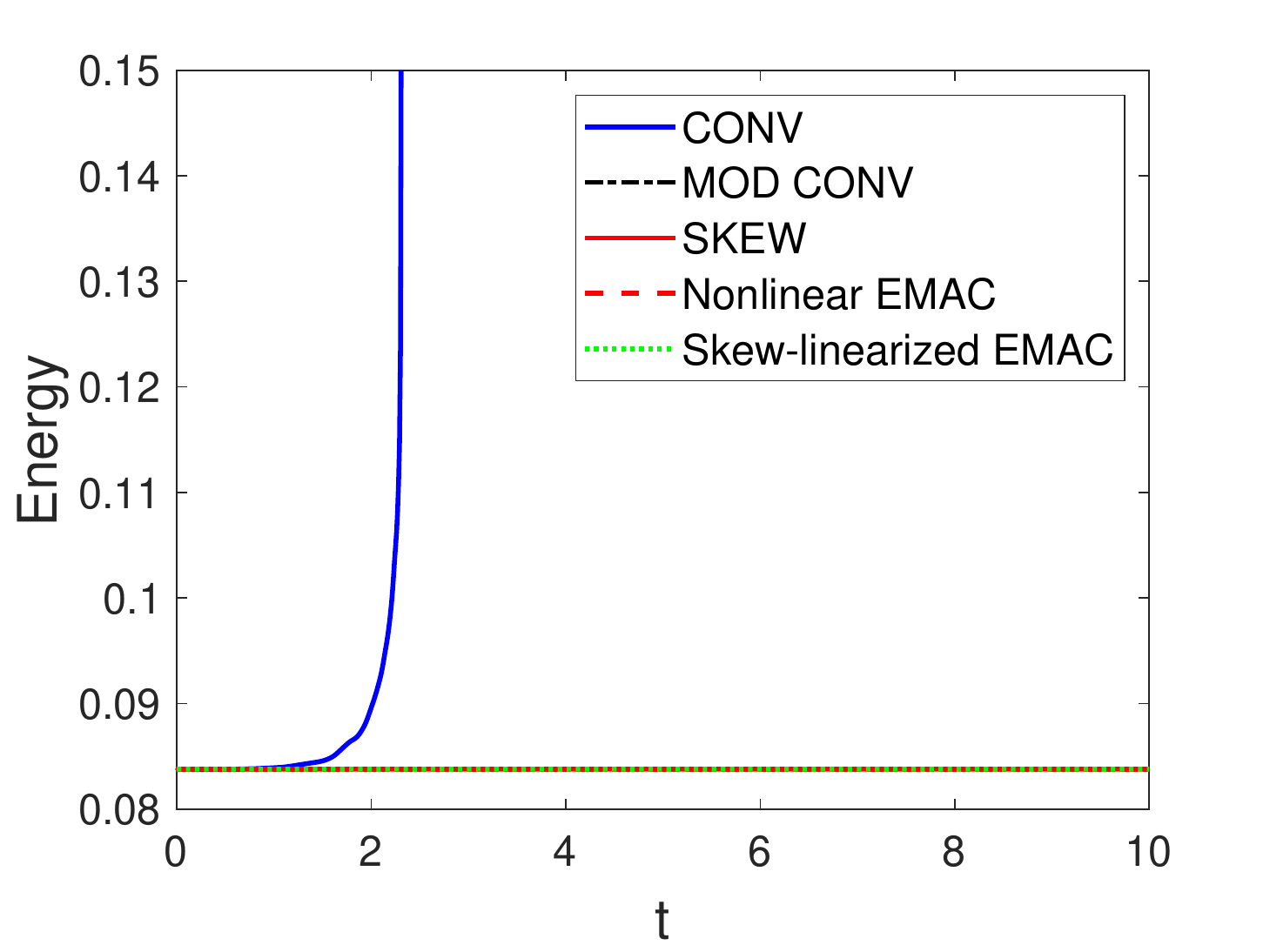}
\includegraphics[width=0.48\textwidth]{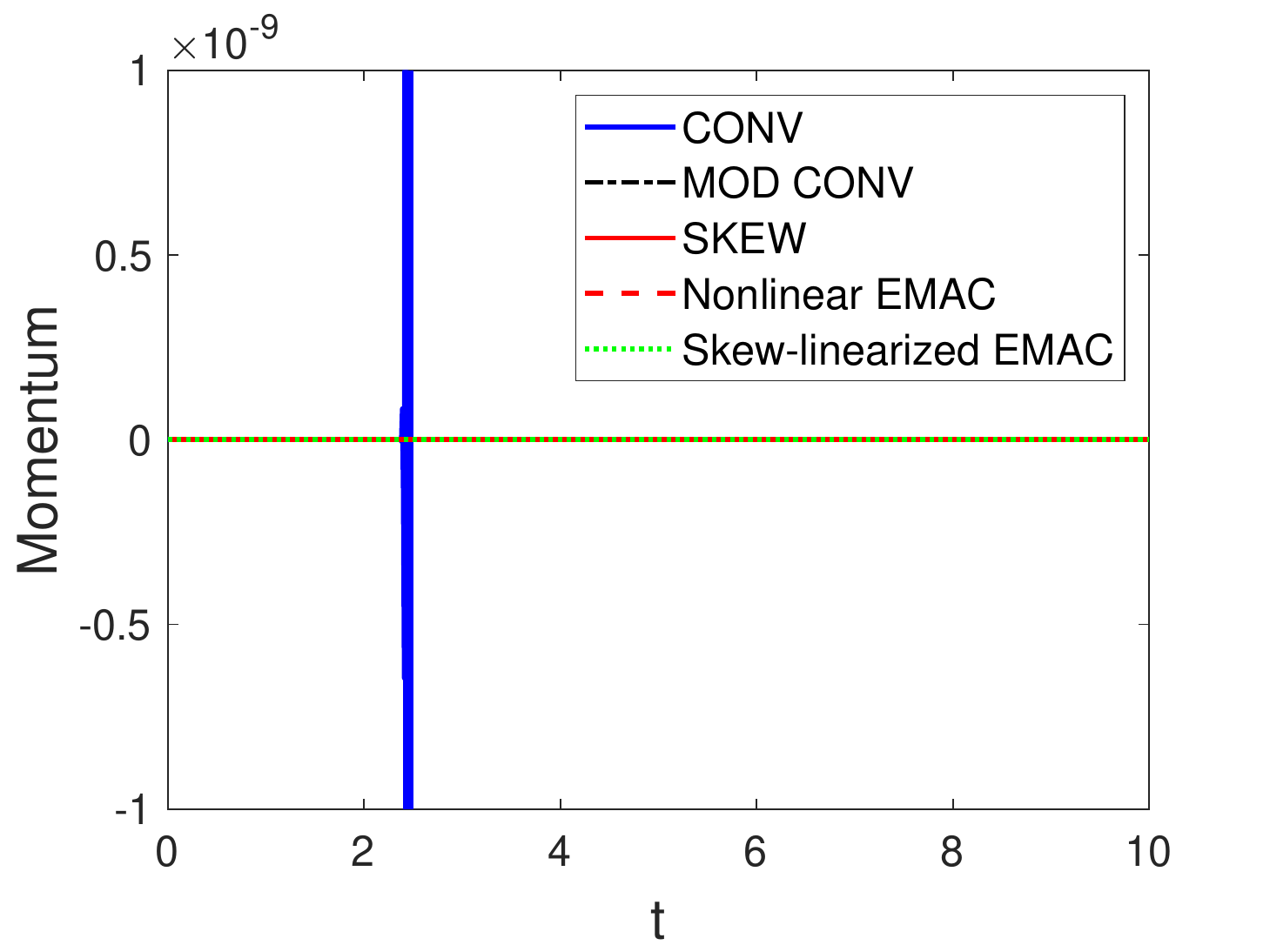}
\includegraphics[width=0.48\textwidth]{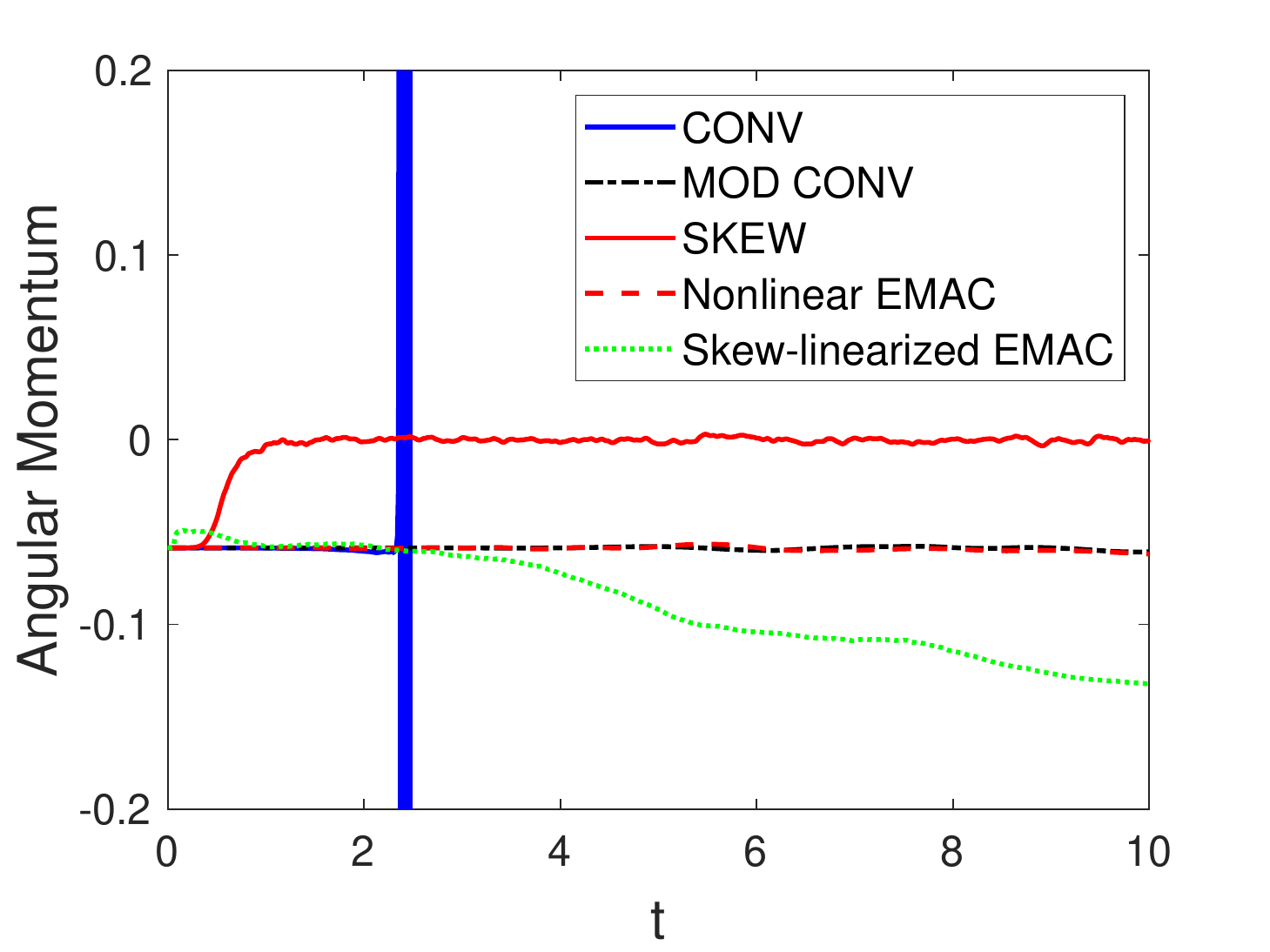}
\includegraphics[width=0.48\textwidth]{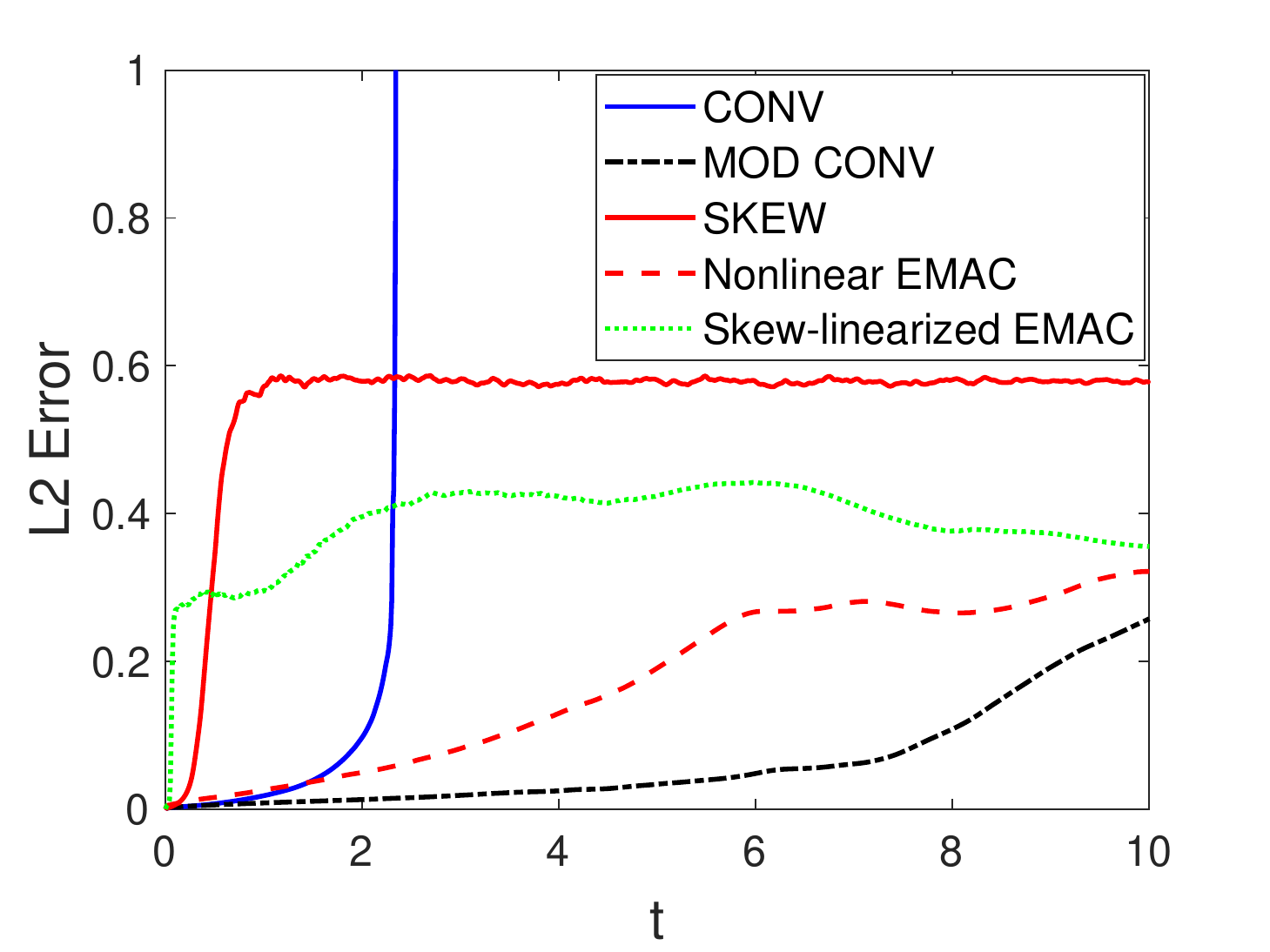}
\caption{Example 1. Plots of the energy, momentum, angular momentum and L2 errors by $P_{2}/P_{1}$ versus time.}
\label{Greshofig1}
\end{figure}
\begin{figure}[htbp]
\centering
\includegraphics[width=0.48\textwidth]{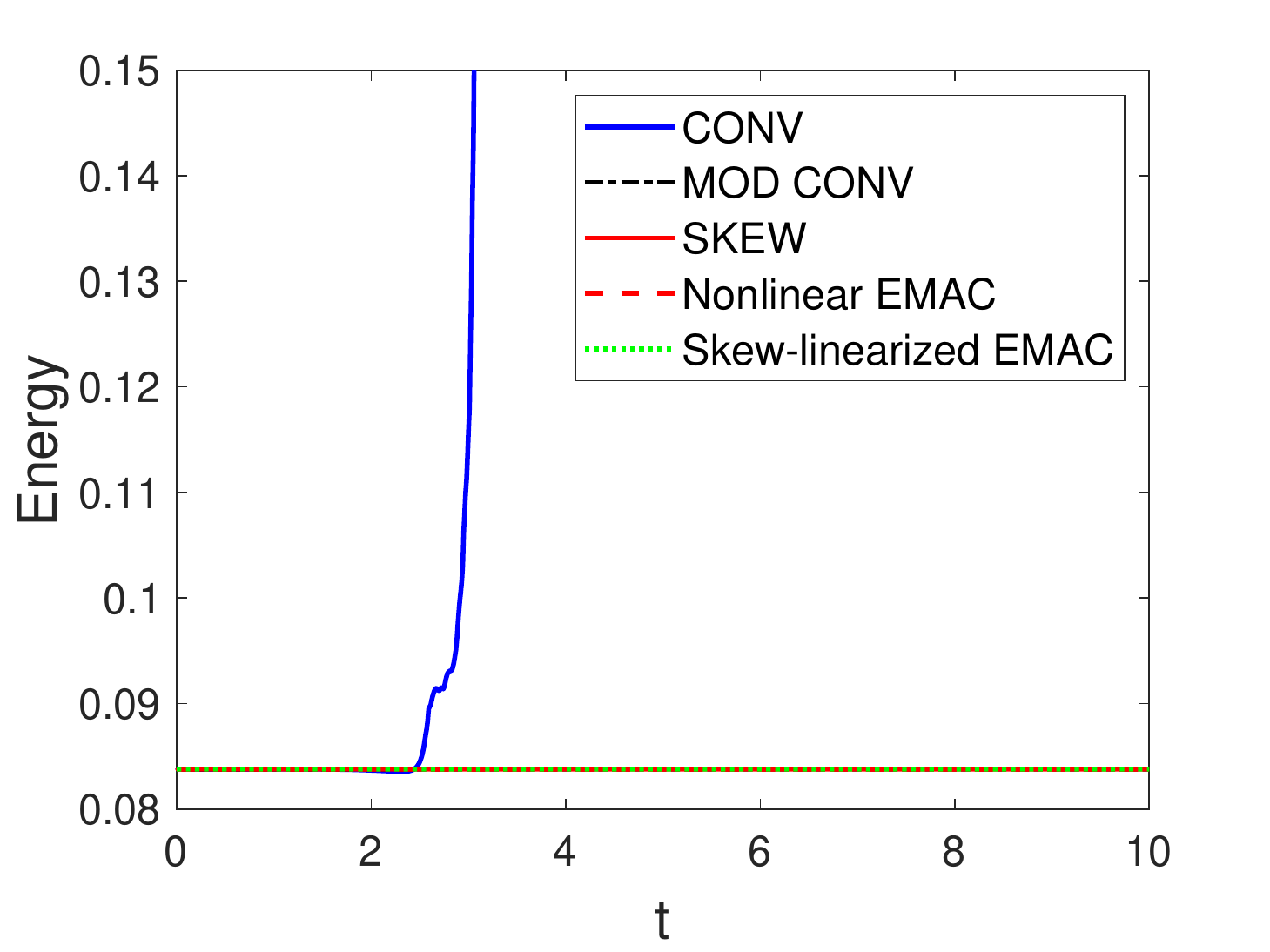}
\includegraphics[width=0.48\textwidth]{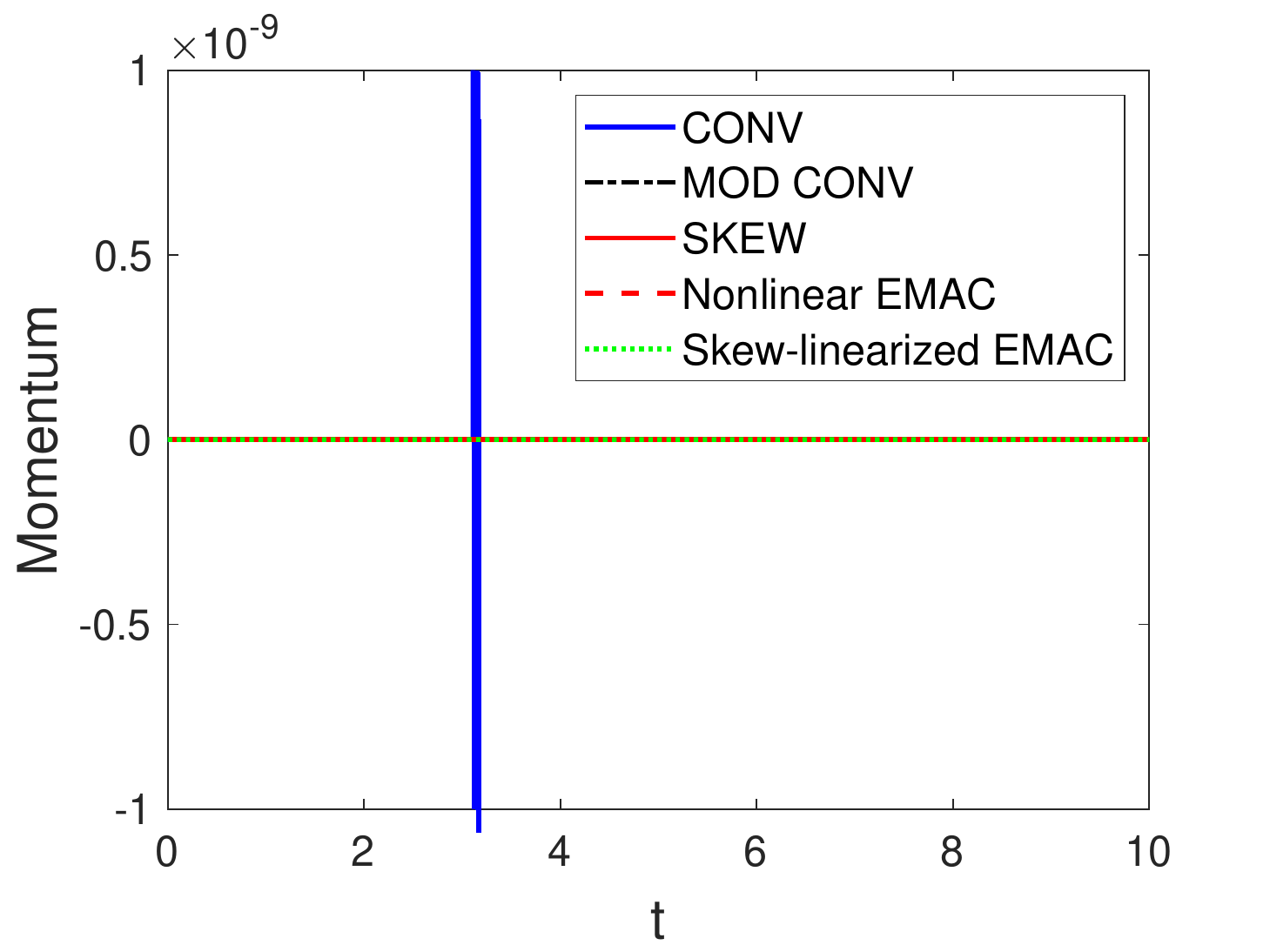}
\includegraphics[width=0.48\textwidth]{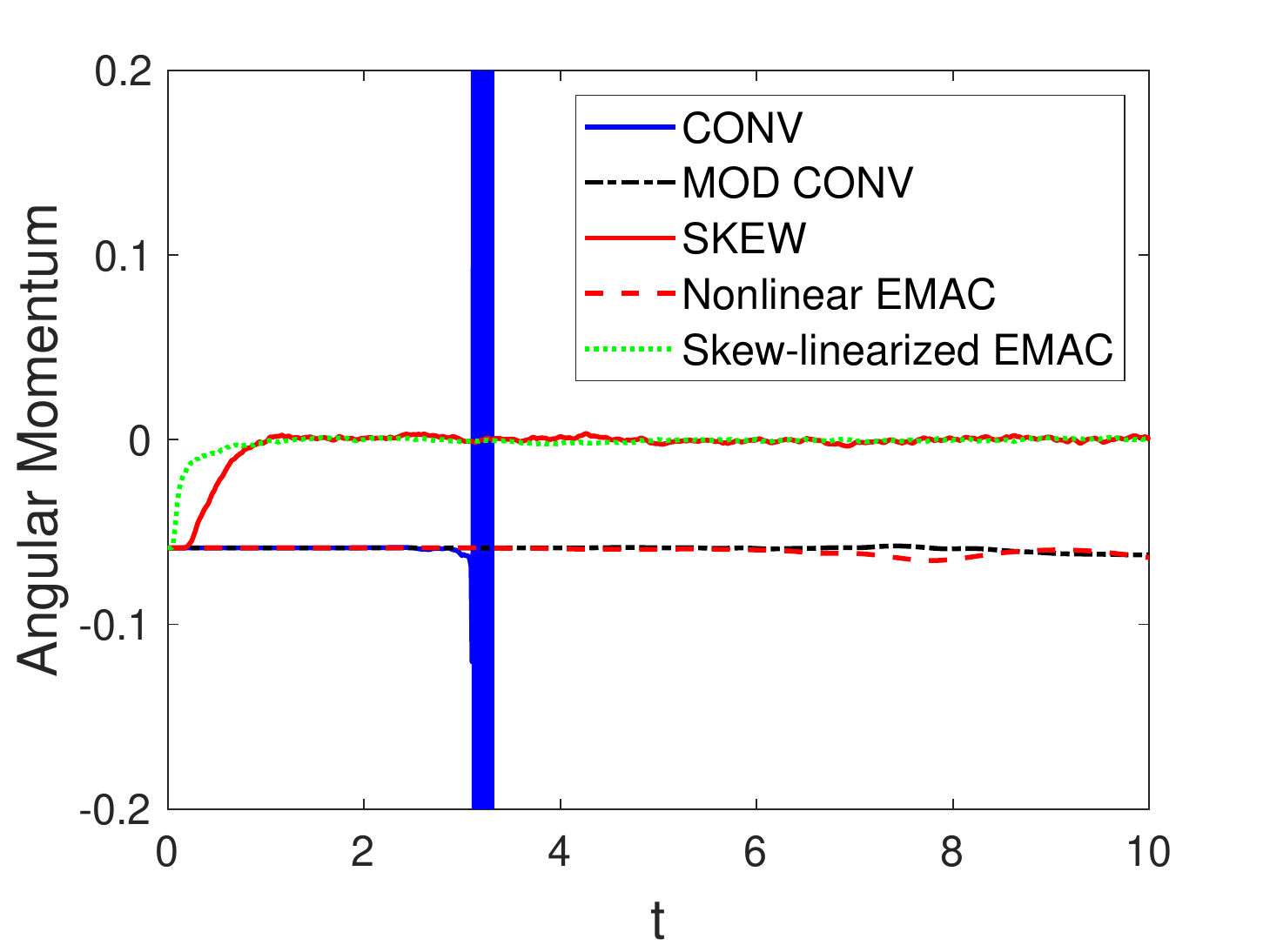}
\includegraphics[width=0.48\textwidth]{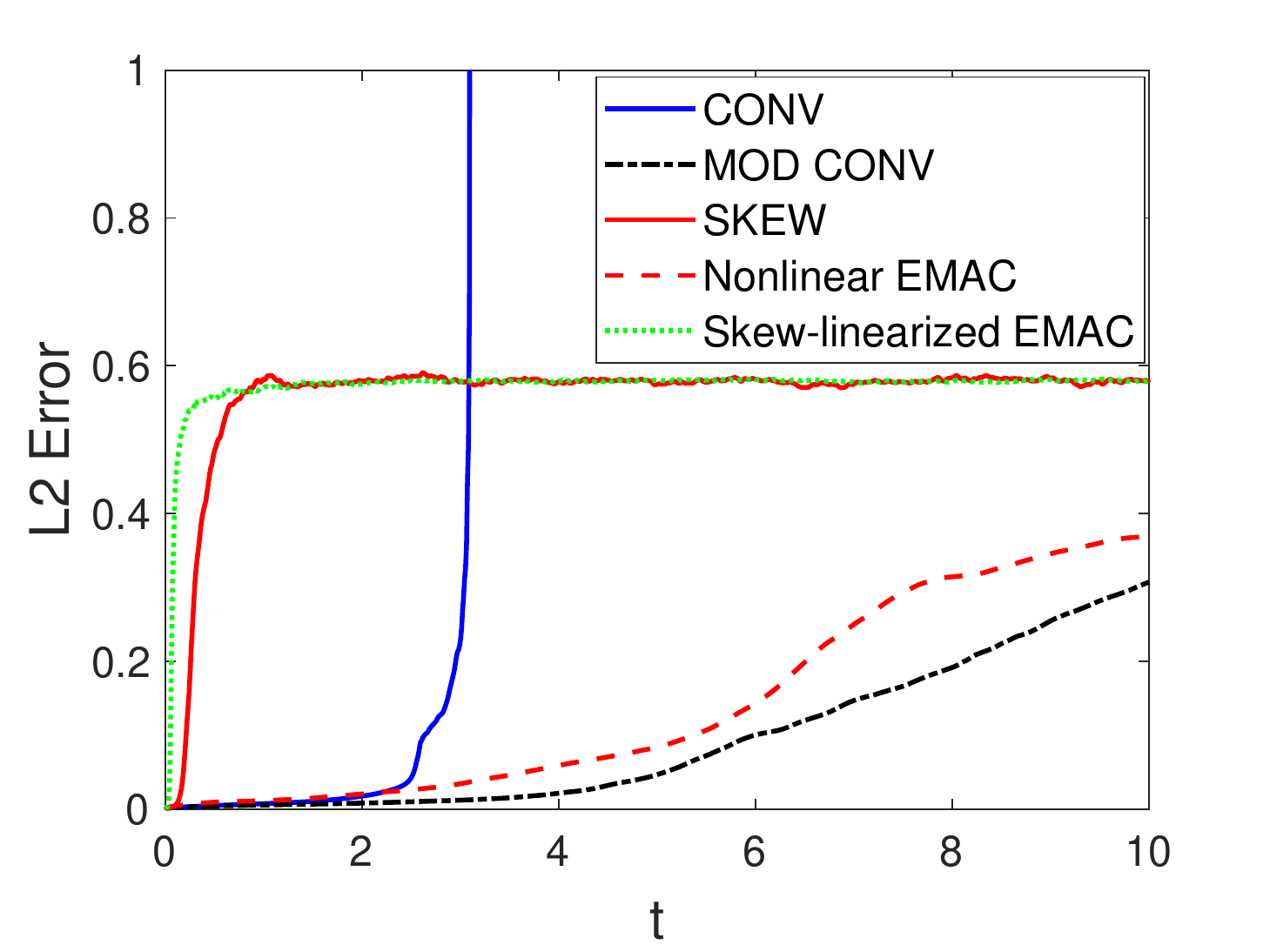}
\caption{Example 1. Plots of the energy, momentum, angular momentum and L2 errors by $P_{2}^{bubble}/P_{1}^{disc}$ versus time.}
\label{Greshofig2}
\end{figure}
\begin{figure}[htbp]
\centering
\includegraphics[width=0.30\textwidth]{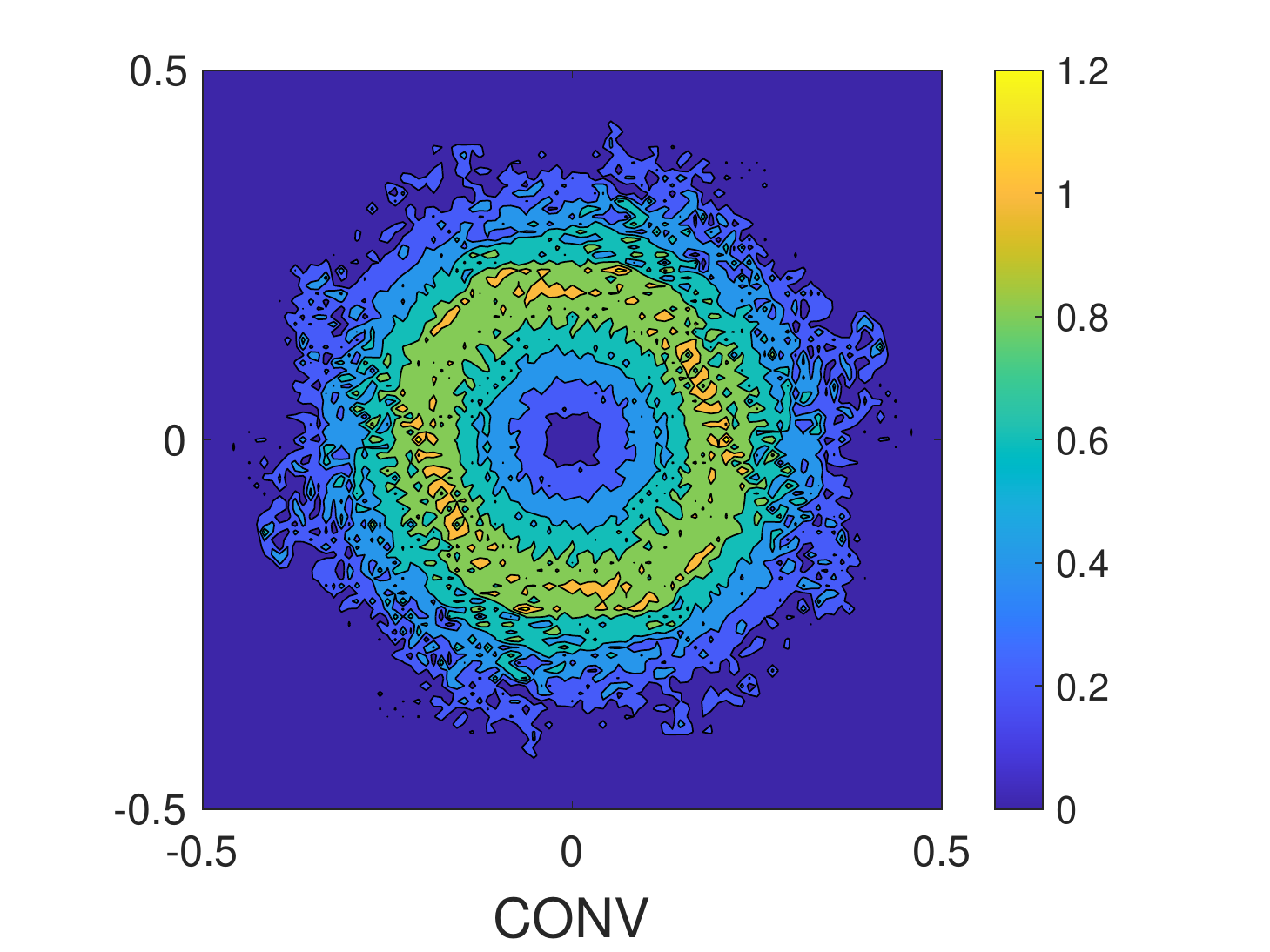}
\includegraphics[width=0.30\textwidth]{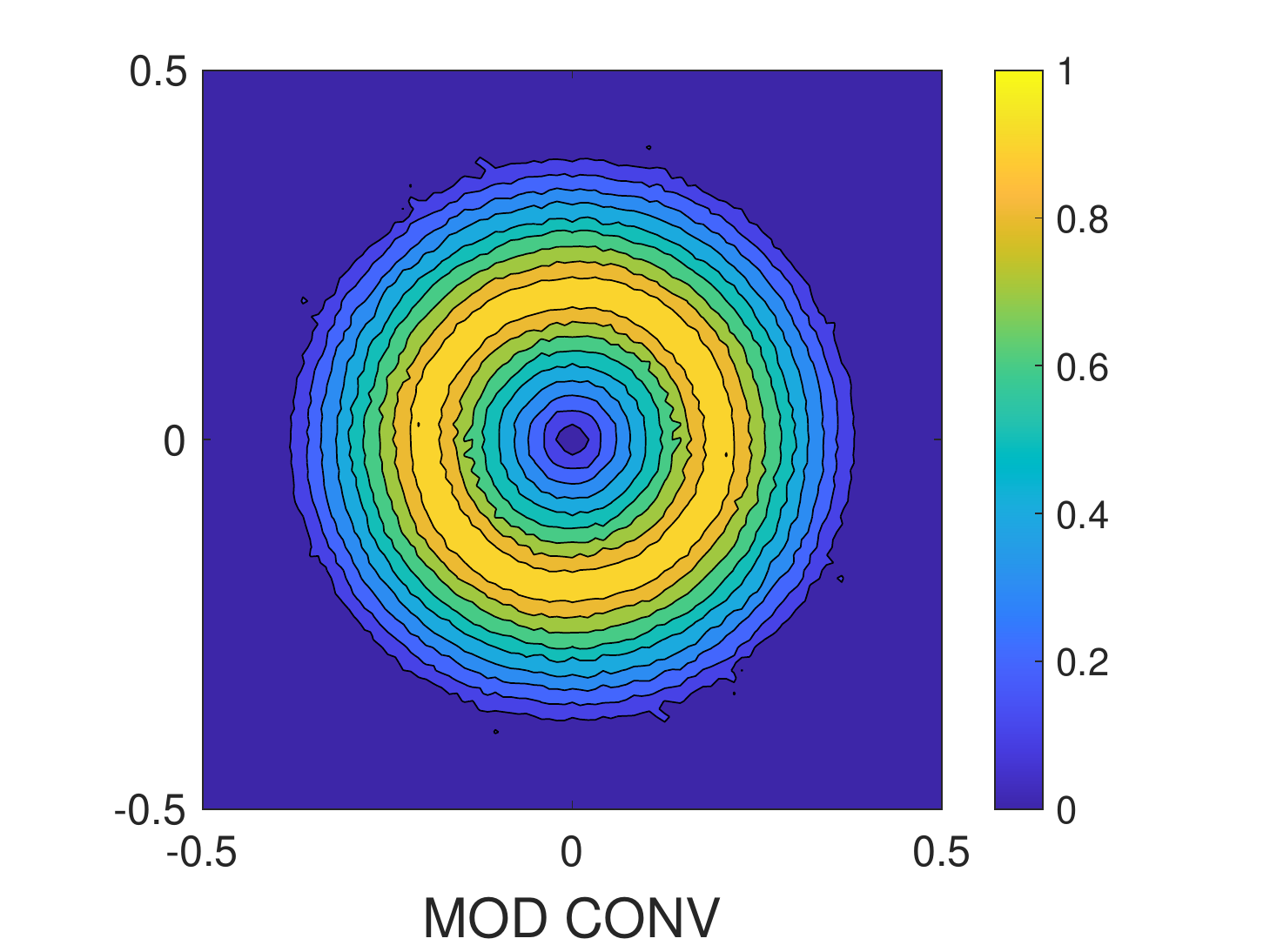}
\includegraphics[width=0.30\textwidth]{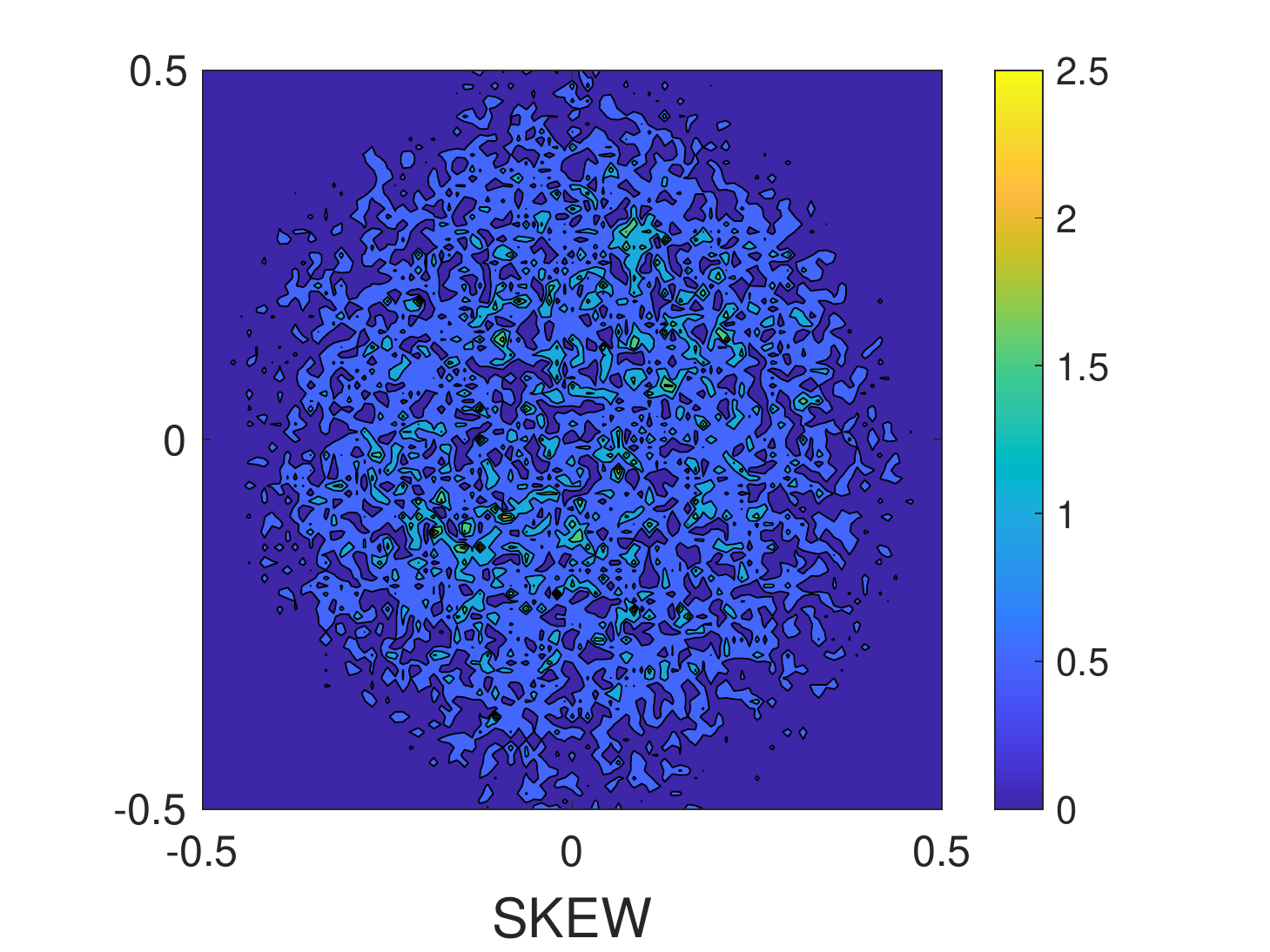}\\
\includegraphics[width=0.30\textwidth]{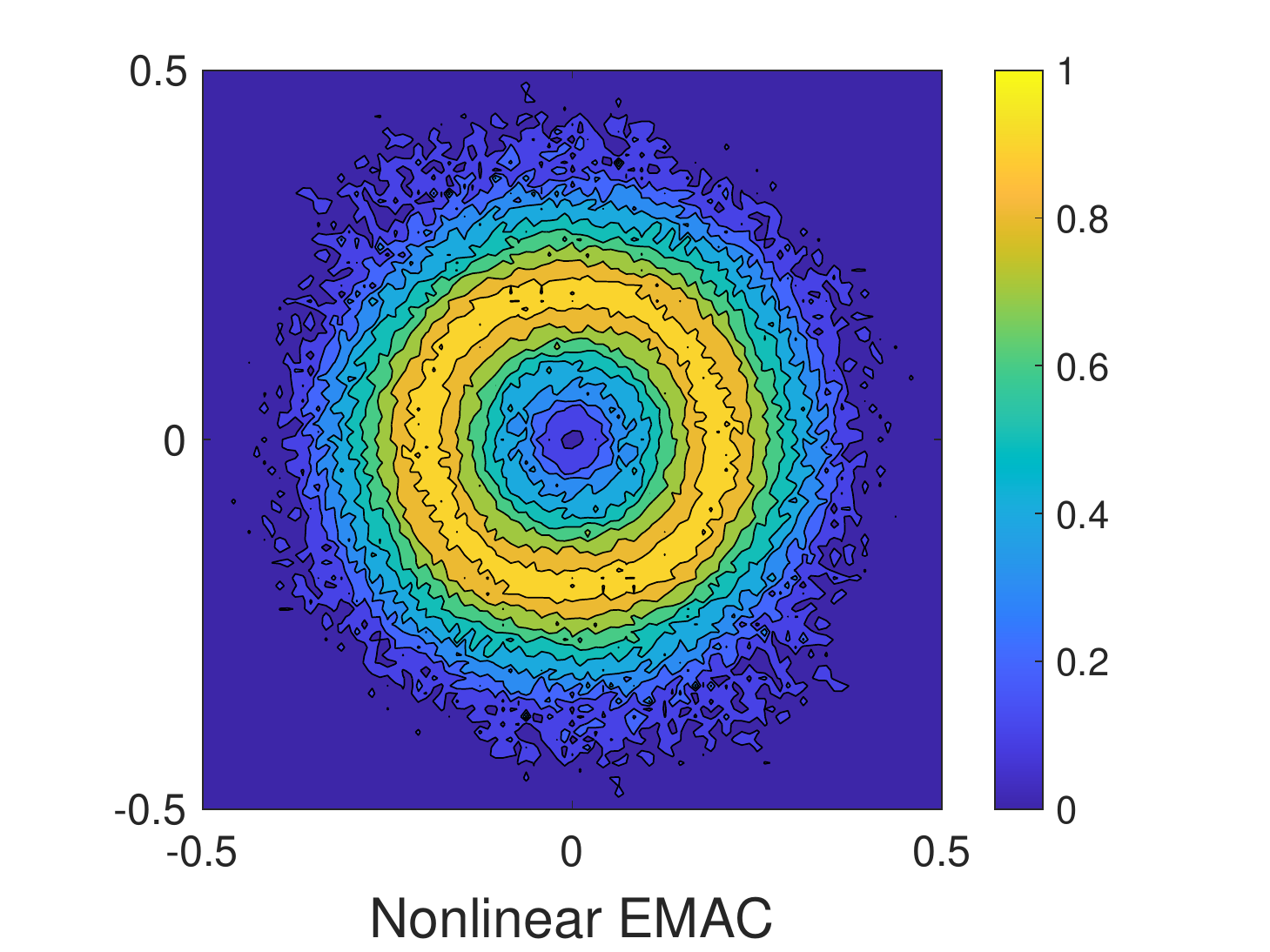}
\includegraphics[width=0.30\textwidth]{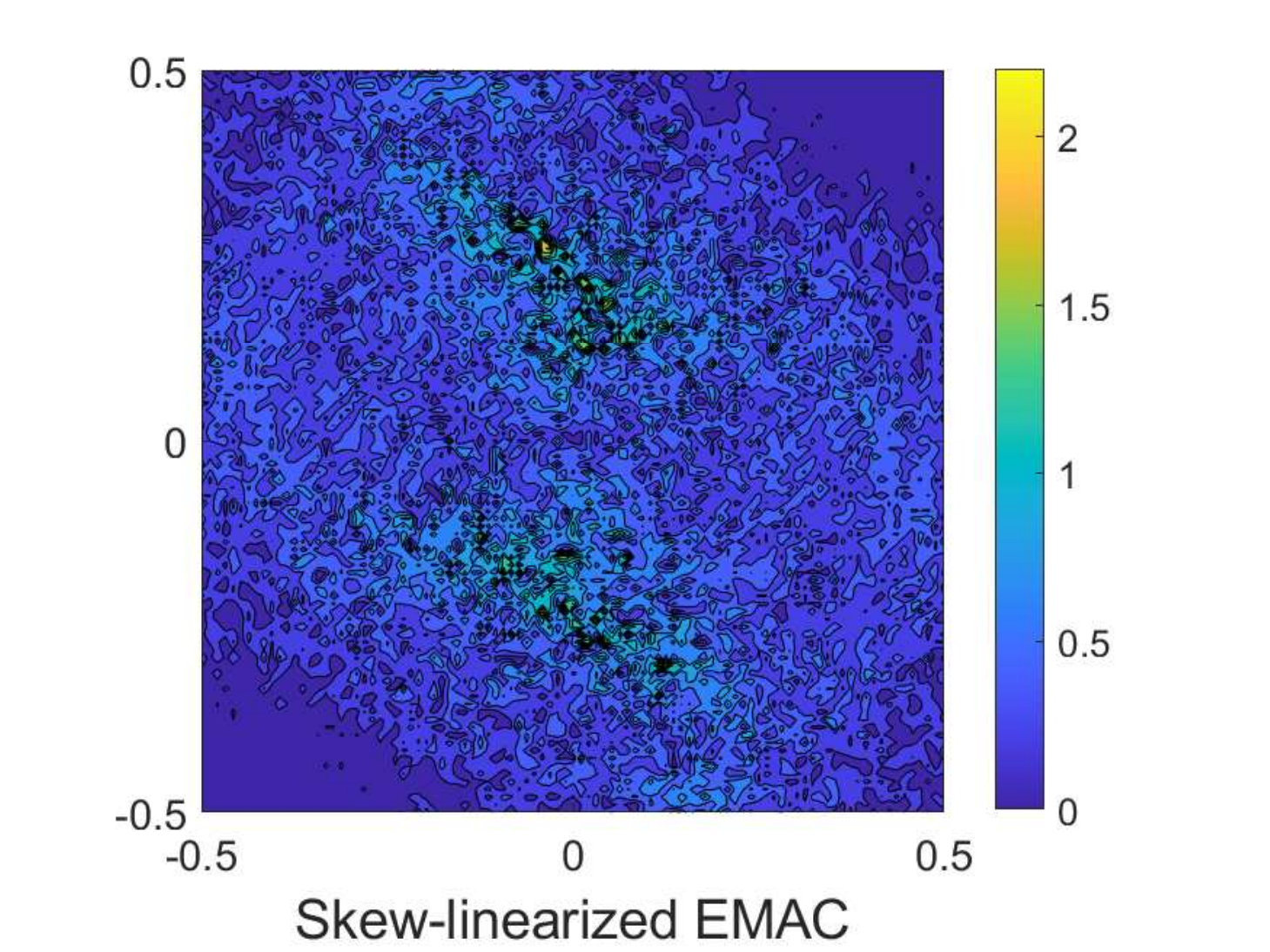}
\caption{Example 1. Speed contour at $t=2$ by $P_{2}/P_{1}$.}
\label{Greshofig4}
\end{figure}
\begin{figure}[htbp]
\centering
\includegraphics[width=0.30\textwidth]{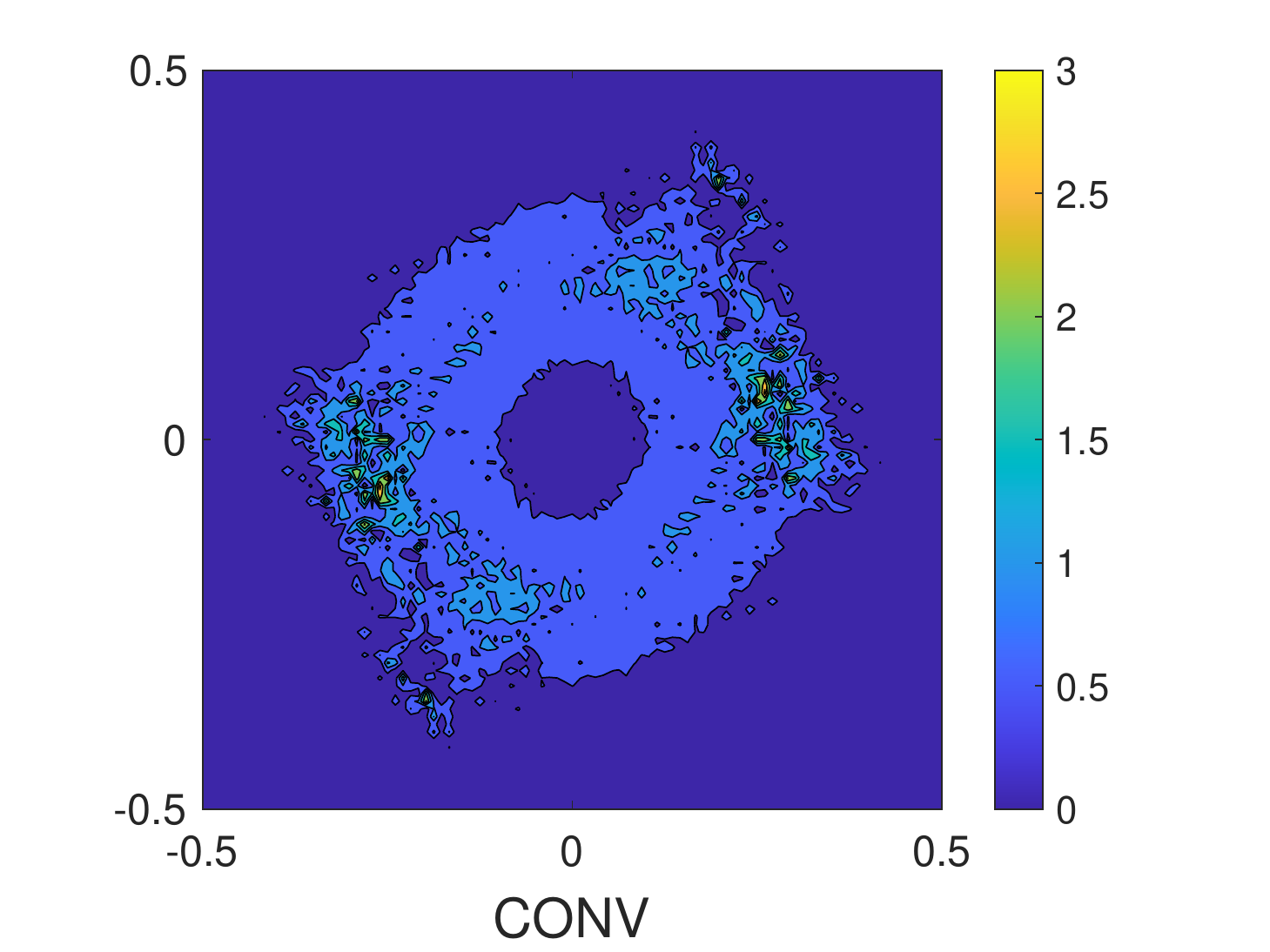}
\includegraphics[width=0.30\textwidth]{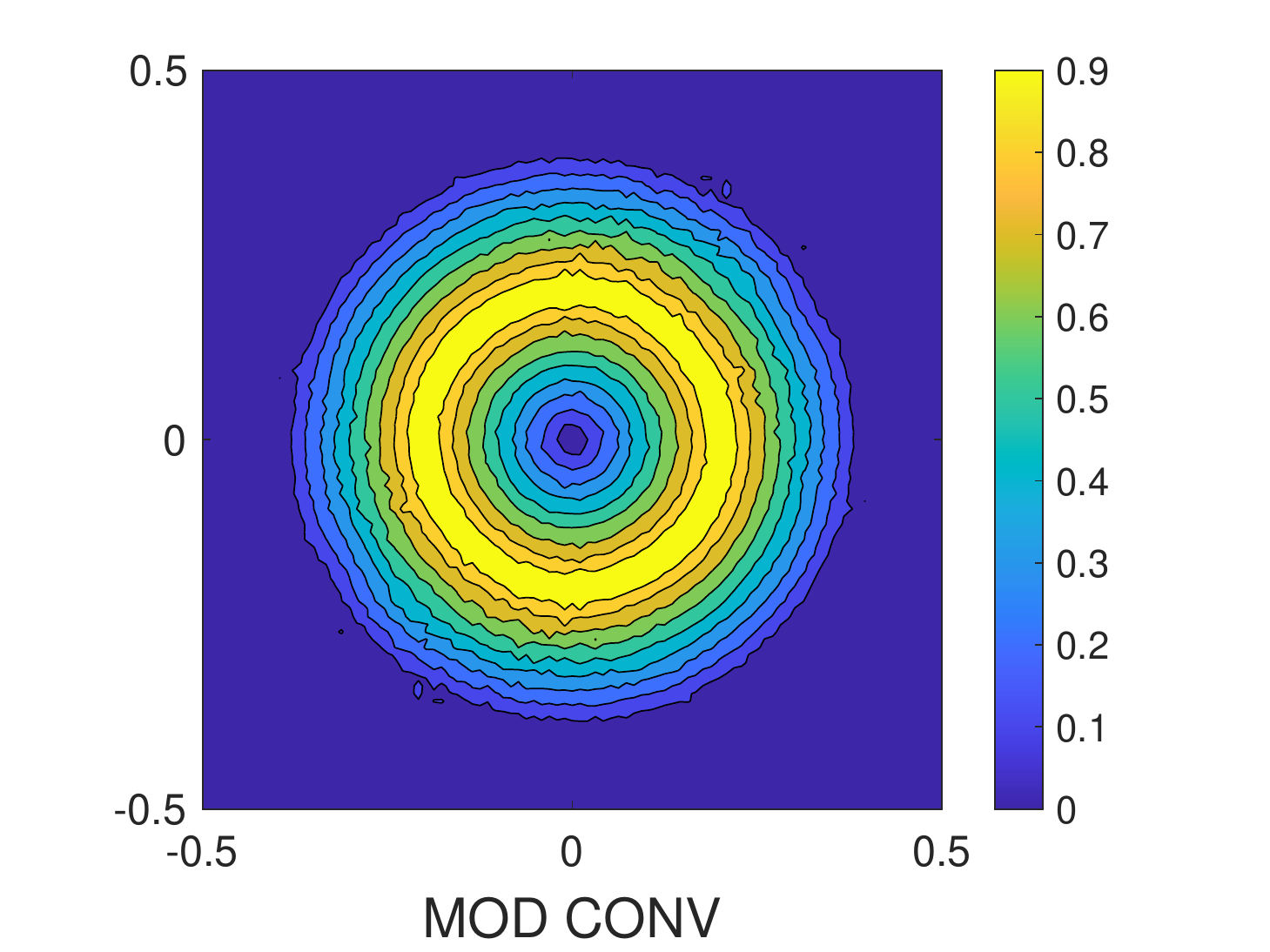}
\includegraphics[width=0.30\textwidth]{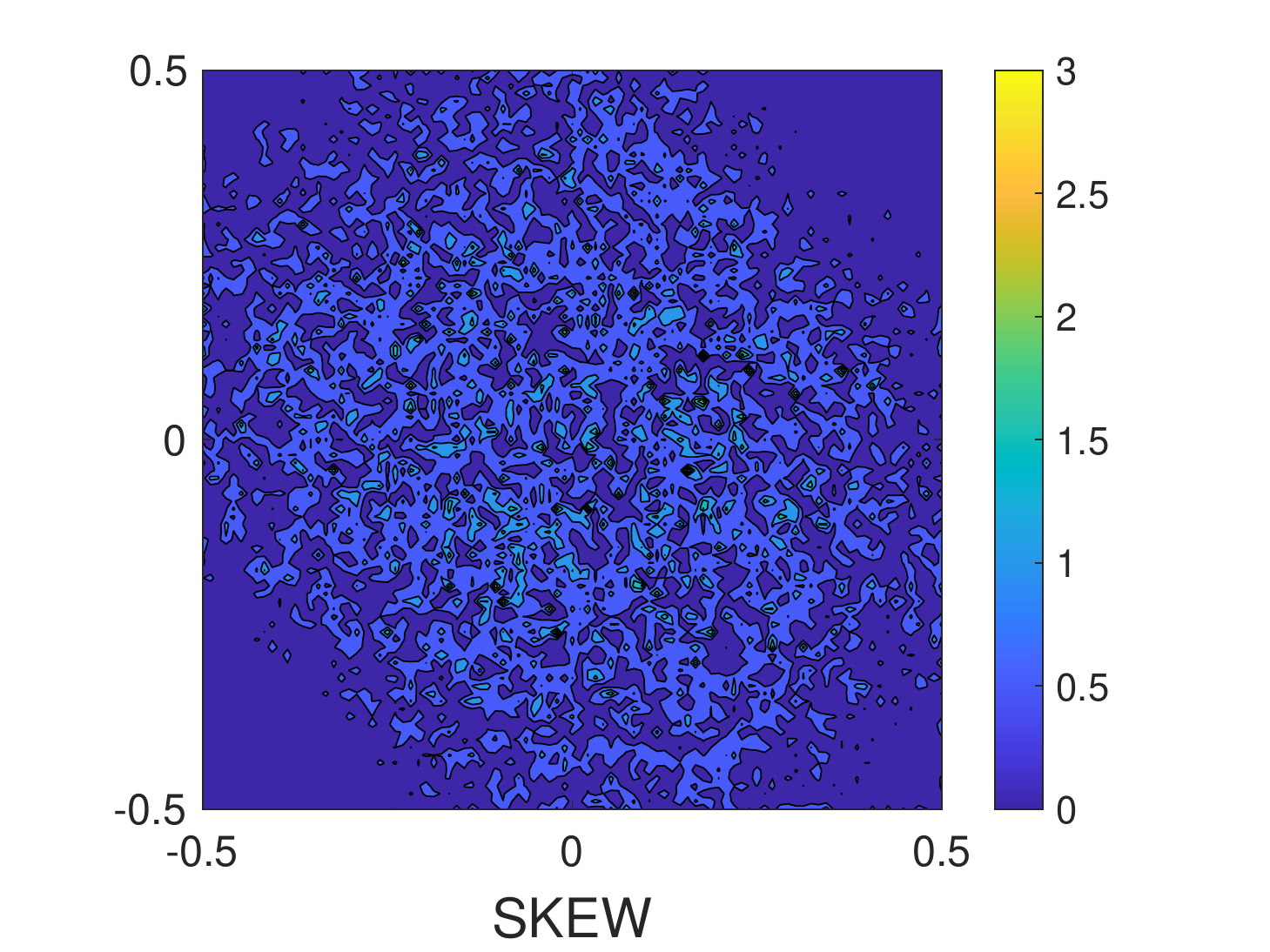}\\
\includegraphics[width=0.30\textwidth]{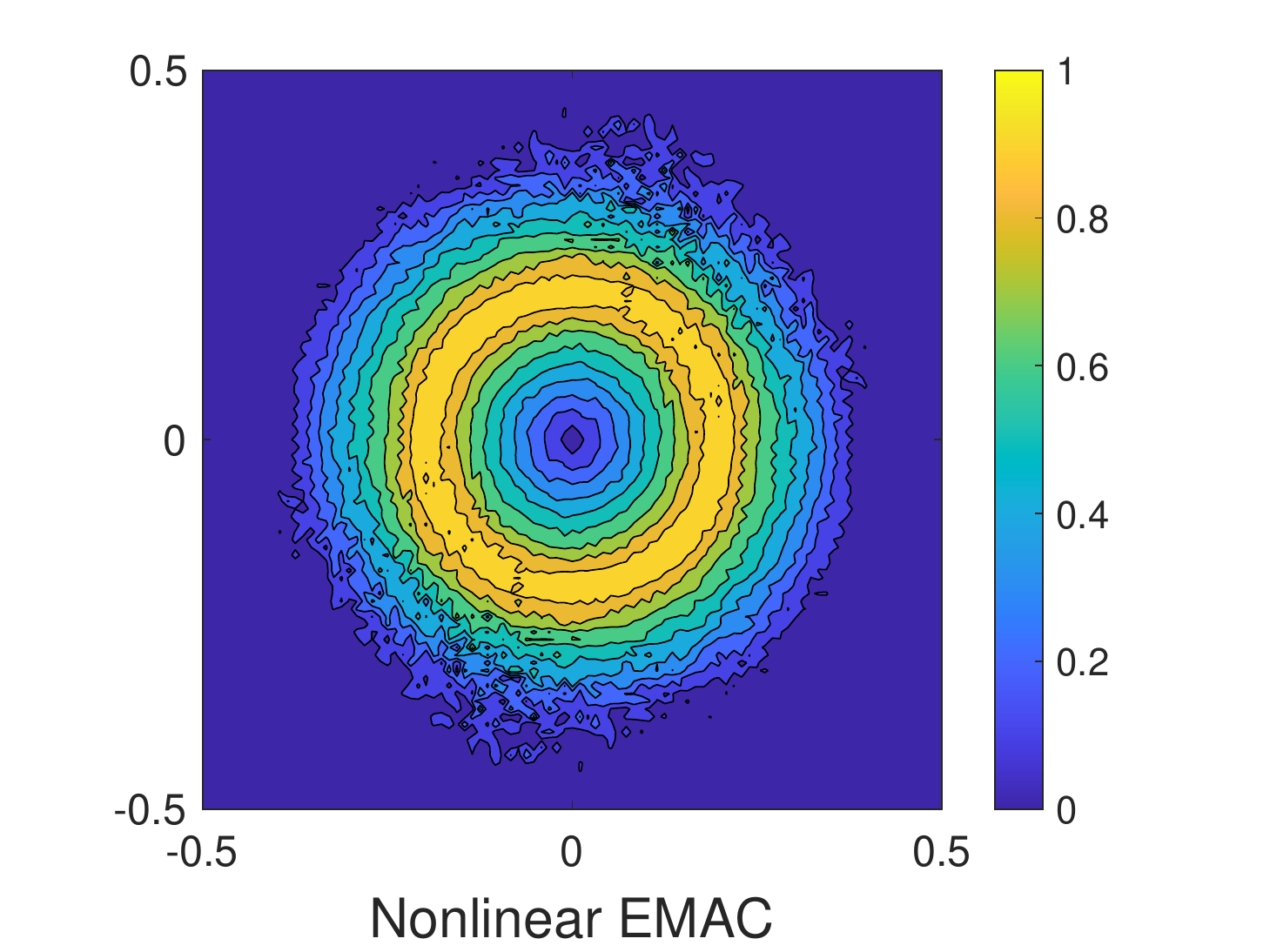}
\includegraphics[width=0.30\textwidth]{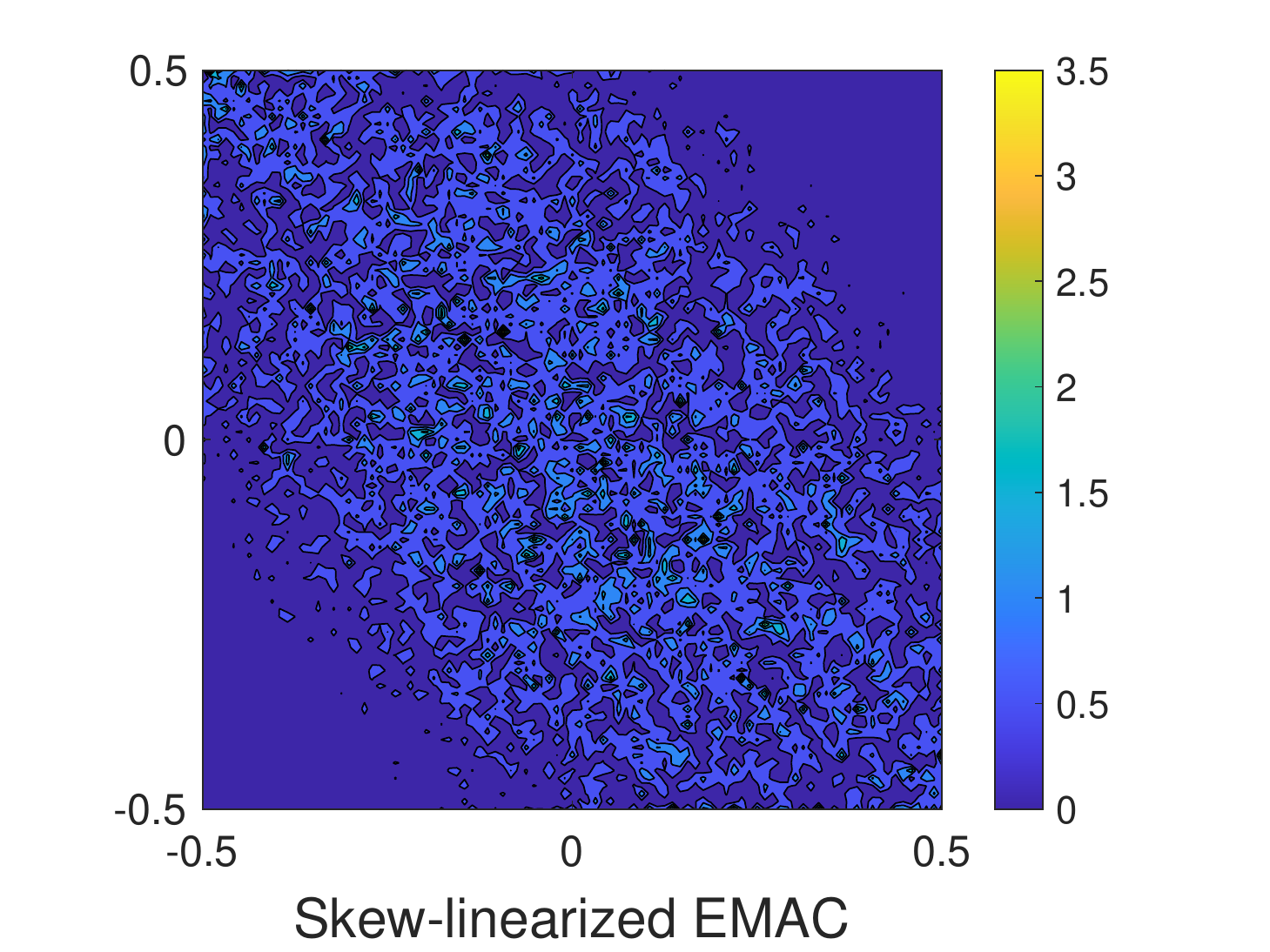}
\caption{Example 1. Speed contour at $t=3$ by $P_{2}^{bubble}/P_{1}^{disc}$.}
\label{Greshofig5}
\end{figure}
\subsection{Example 2: the lattice vortex problem}
In the second test we study the performance on the lattice vortex flow. This problem is a Navier-Stokes equation with zero external force, $\boldsymbol{f}\equiv0$. The exact velocity solution is
\begin{equation}\nonumber
\boldsymbol{u}=\boldsymbol{u}^{0} e^{-8 \nu \pi^{2} t},
\end{equation}
with $\boldsymbol{u}^{0}=\left(\sin \left(2 \pi x\right) \sin \left(2 \pi y\right), \cos \left(2 \pi x\right) \cos \left(2 \pi y\right)\right)^{\top}$ (see \cref{latticeinitial}). We set $\nu=10^{-5}$ and $T=10$, which simulates a high Reynolds number case and longer time evolution. The test is run on the uniform $64\times64$ triangulation of the domain $\Omega=(0,1)^{2}$ and the Dirichlet velocity boundary condition is strongly enforced. The Crank-Nicolson difference is used to discretize the time with the step $\Delta t=0.001$. In a sense, this problem is even more difficult than the first problem because its flow structure is {\it `dynamically unstable so that small perturbations result in a very chaotic motion'} \cite{2001Vorticity}. In the papers \cite{schroeder_towards_2018,Rebholz2020}, this problem was used to show the independence of the Reynolds number in the Gronwall constant for exactly divergence-free elements and the EMAC formulation, respectively. Here we show that our method possesses this advantage also.
\begin{figure}[htbp]
\centering
\includegraphics[width=0.48\textwidth]{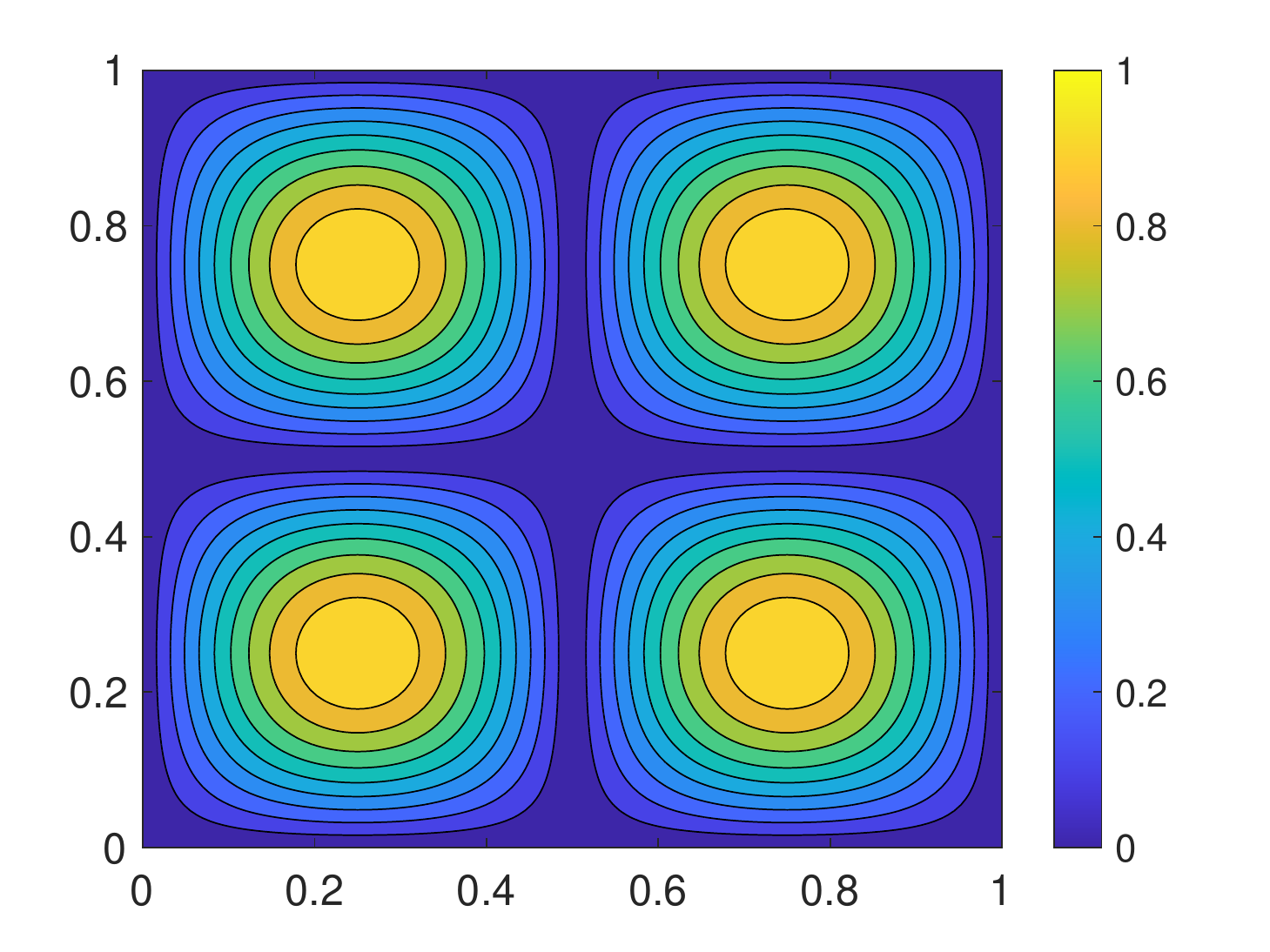}
\includegraphics[width=0.48\textwidth]{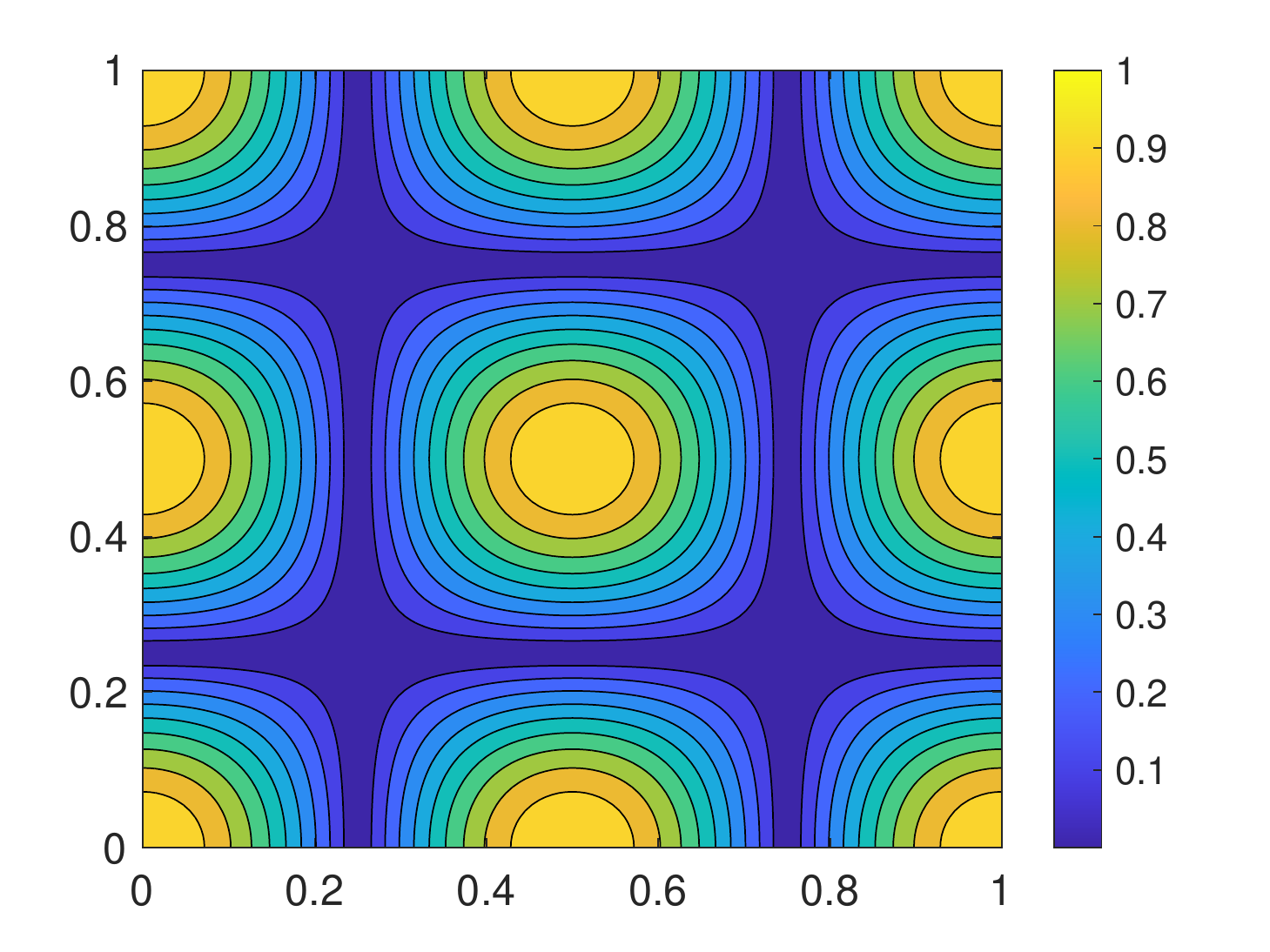}
\caption{Example 2. The first component (left) and the second component (right) of the initial velocity.}
\label{latticeinitial}
\end{figure}

The velocity errors are shown in \cref{latticeTH} and \cref{latticeBR2}. For $P_{2}/P_{1}$, modified CONV shows best performance; for $P_{2}^{bubble}/P_{1}^{disc}$, skew-linearized EMAC and nonlinear EMAC give best results. Compared to SKEW errors, the errors from modified CONV and EMAC do admit a slower growth speed. The scheme built on CONV blows up at nearly $t=2$ and $t=7$ for the two elements, respectively. Before it blows up, it seems to approximate the exact solution the most accurately.

The performance of modified CONV in lower Reynolds number cases is also of interest. One important problem is that whether the modified scheme will bring extra errors compared to CONV. We test the performance of CONV and modified CONV on several values of the viscosity (from $10^{3}$ to $10^{-3}$), with $P_{2}^{bubble}/P_{1}^{disc}$. The ratio of their errors, ``modified CONV/CONV", of each step are in
\cref{latticeBR2ratio}. One can find that there is almost no additional error for modified CONV. The difference is no more than $1\%$ in the worst case. For the case $\nu=10^{-3}$, modified CONV shows a visibly  better performance.
\begin{figure}[htbp]
\centering
\includegraphics[width=0.48\textwidth]{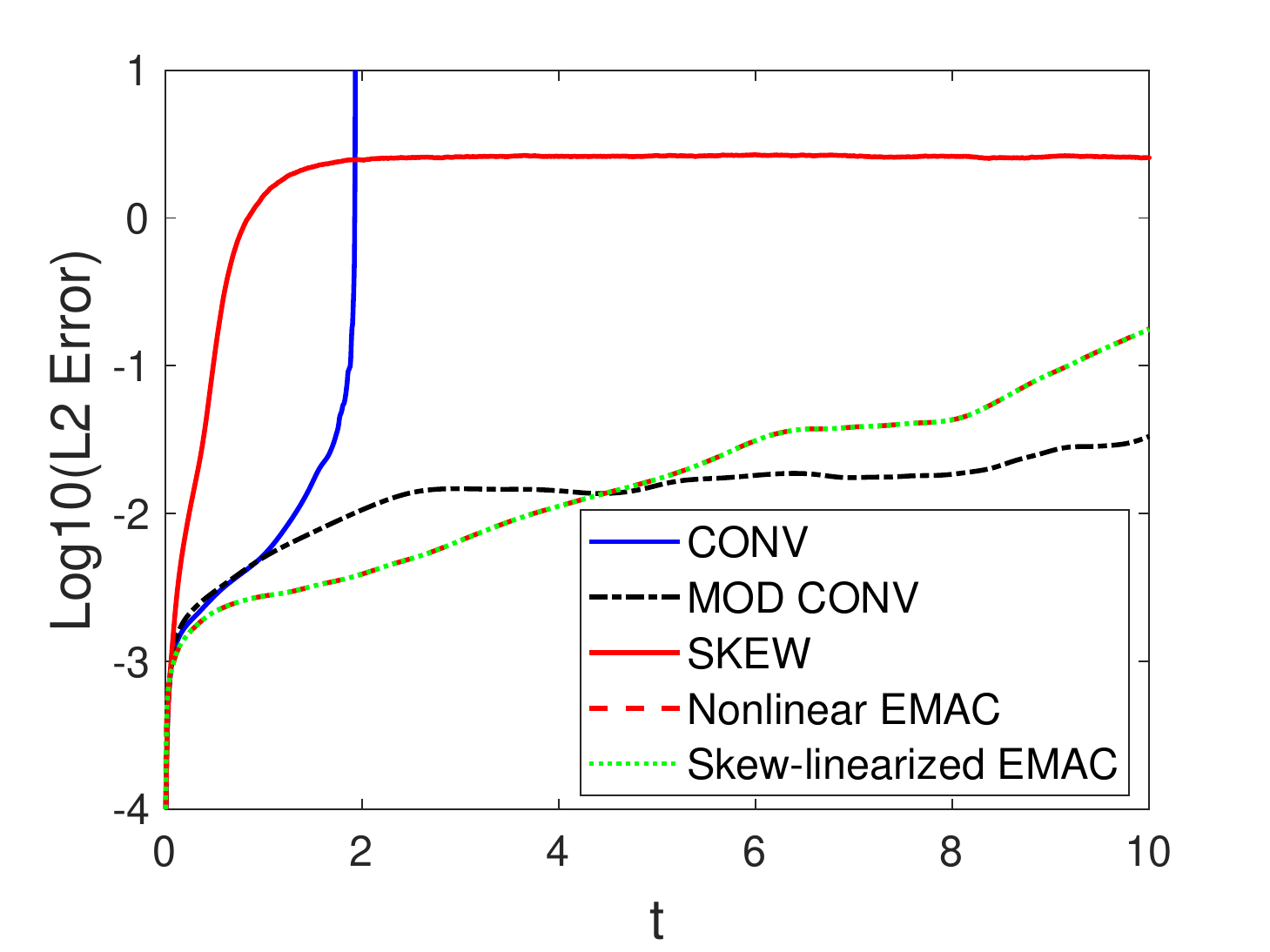}
\includegraphics[width=0.48\textwidth]{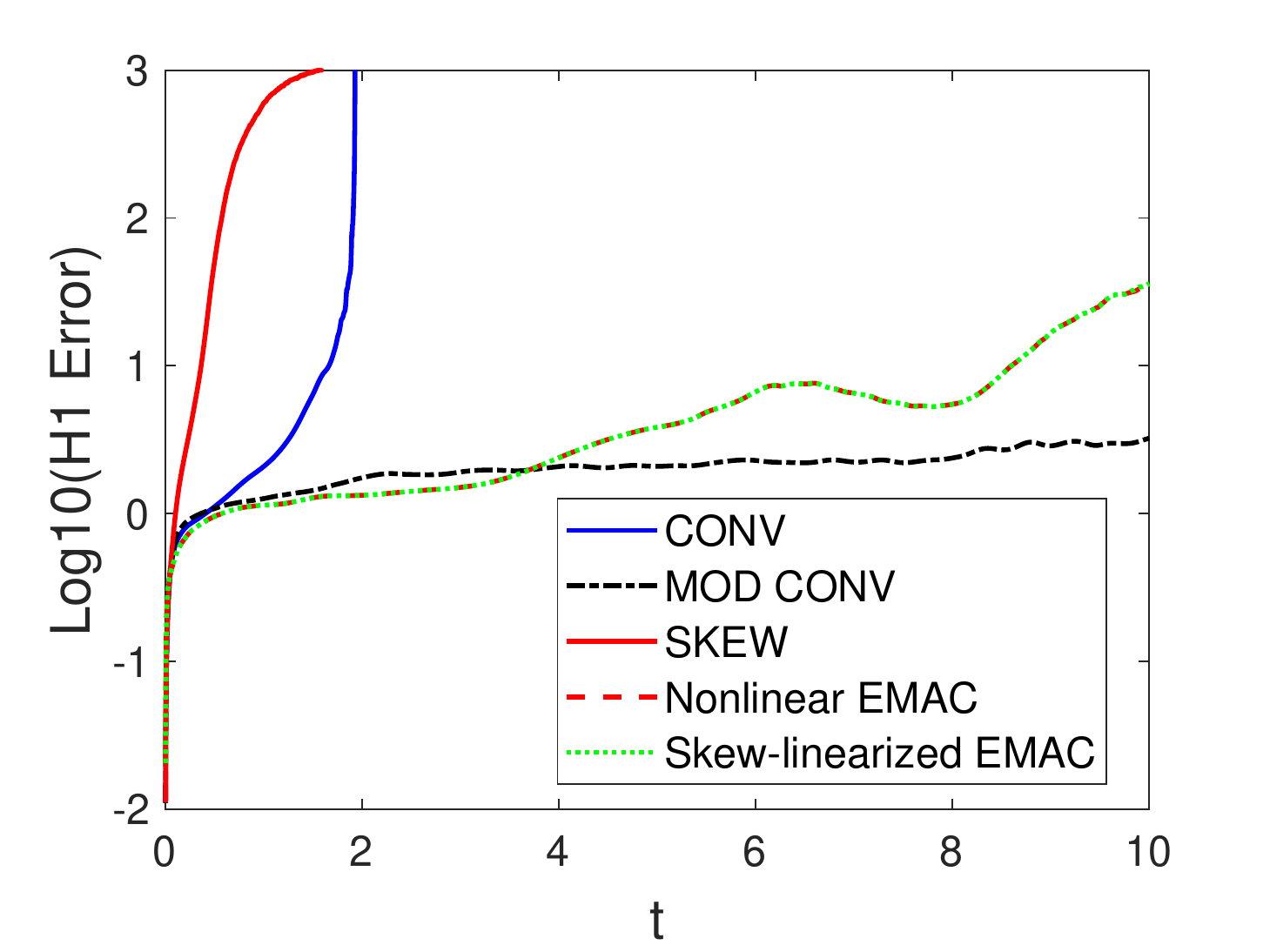}
\caption{Example 2. Velocity errors by $P_{2}/P_{1}$.}
\label{latticeTH}
\end{figure}
\begin{figure}[htbp]
\centering
\includegraphics[width=0.48\textwidth]{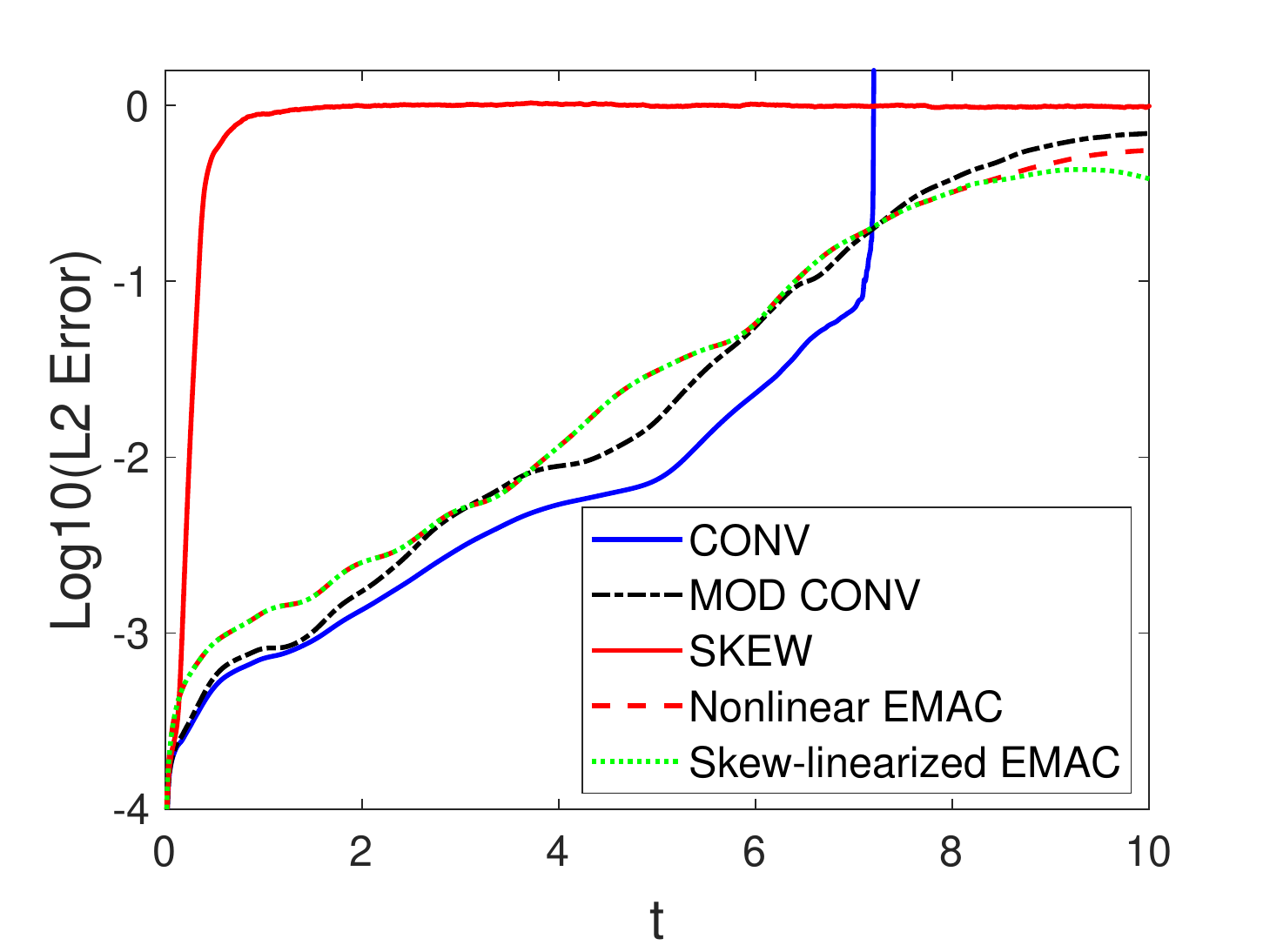}
\includegraphics[width=0.48\textwidth]{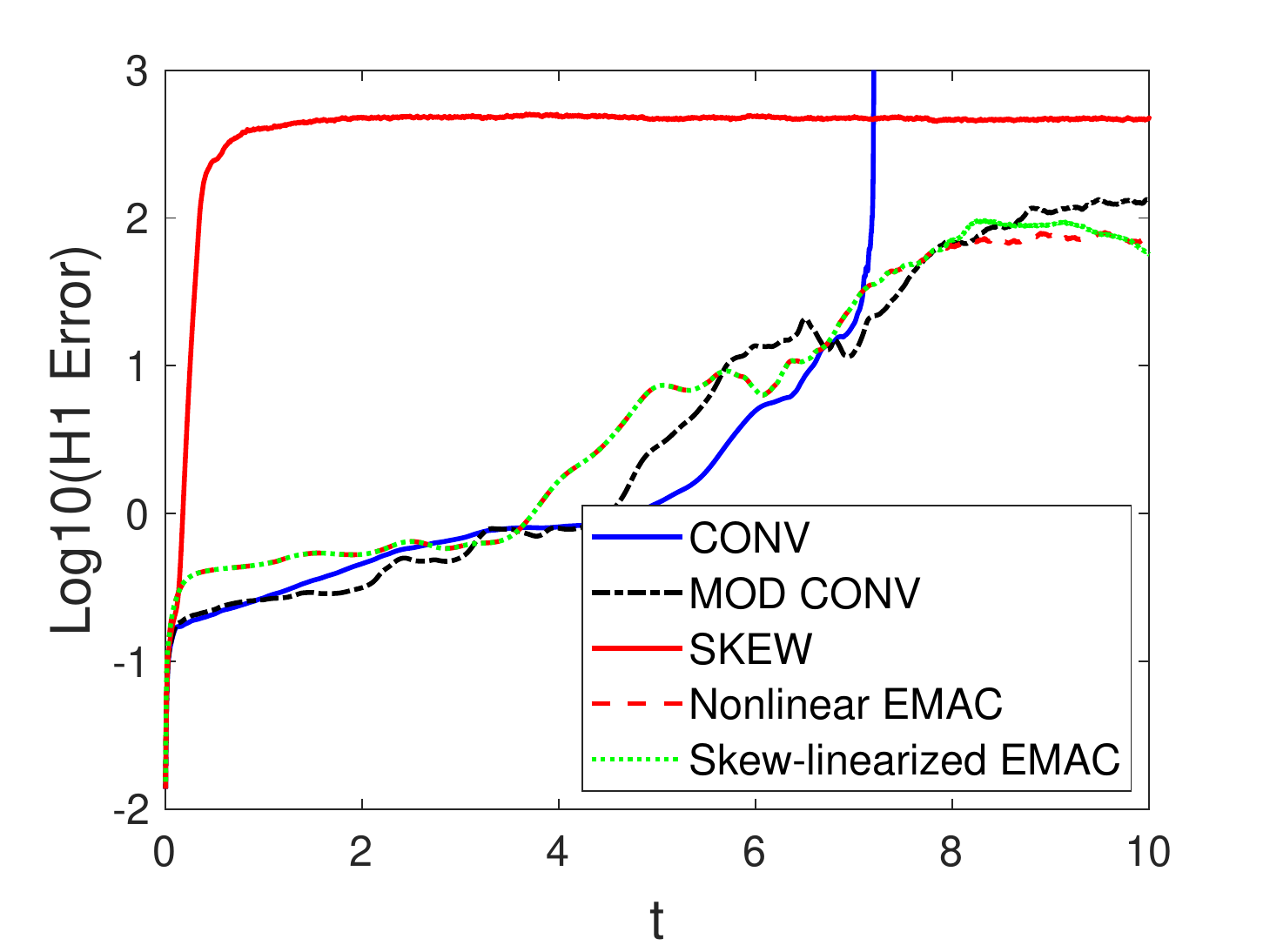}
\caption{Example 2. Velocity errors by $P_{2}^{bubble}/P_{1}^{disc}$.}
\label{latticeBR2}
\end{figure}
\begin{figure}[htbp]
\centering
\includegraphics[width=0.48\textwidth]{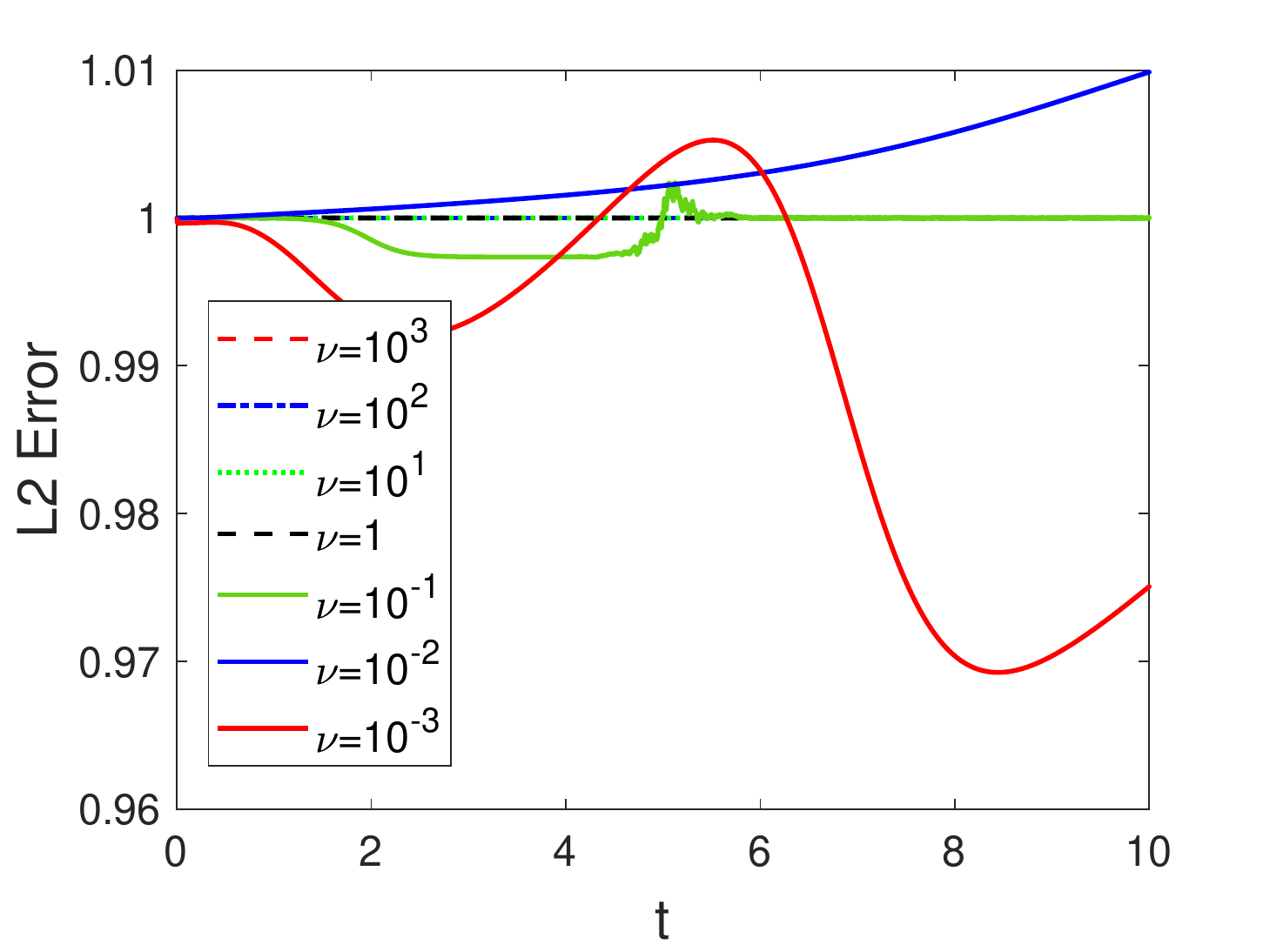}
\includegraphics[width=0.48\textwidth]{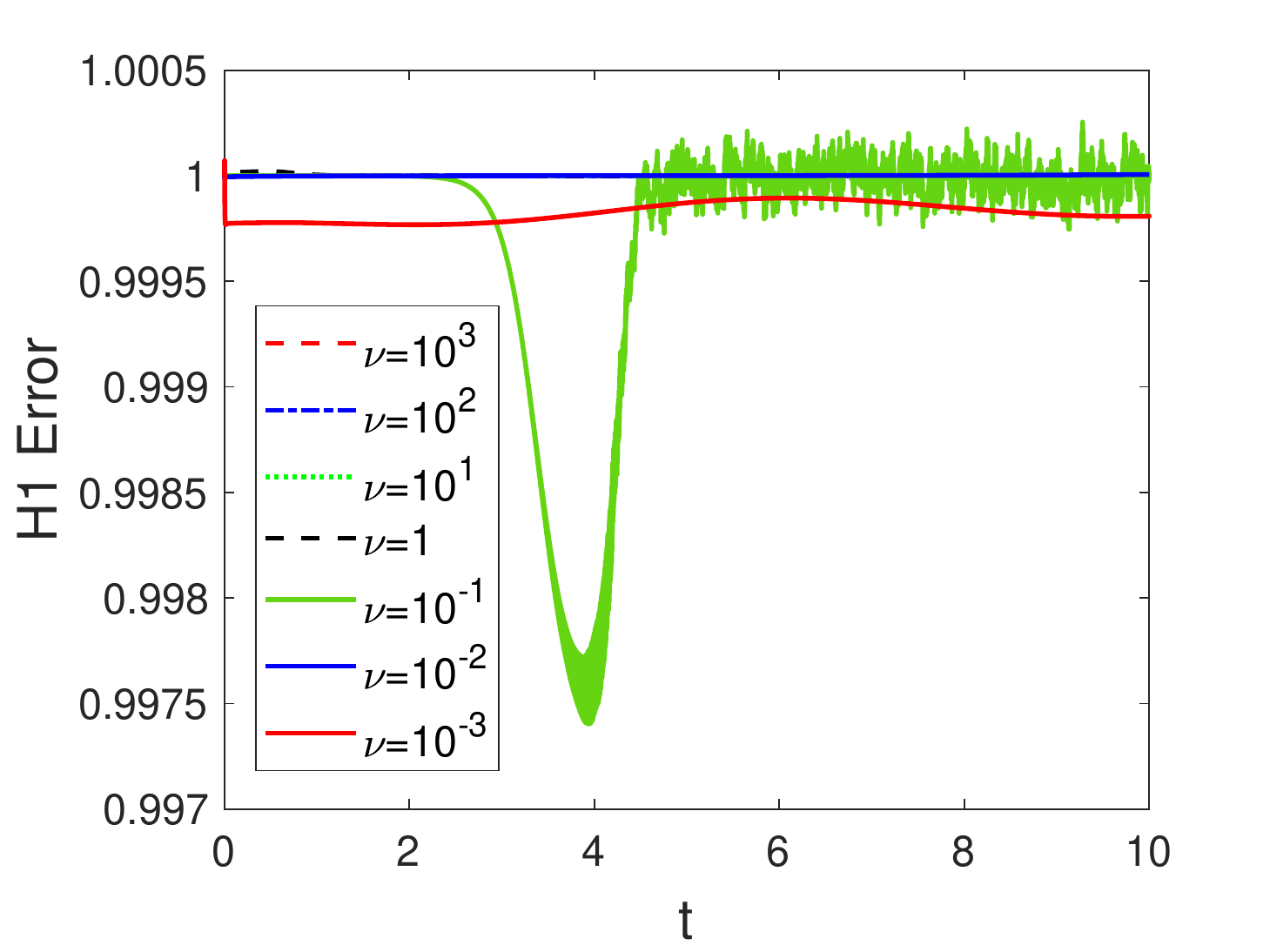}
\caption{Example 2. Ratios of the velocity errors between modified CONV and CONV by $P_{2}^{bubble}/P_{1}^{disc}$ element.}
\label{latticeBR2ratio}
\end{figure}
\subsection{Example 3: the forward-backward step problem}
Our final test is the forward-backward step problem \cite{John2006,layton2008,Rebholz2017}. The computational domain is a $40\times10$ rectangle with a $1\times 1$ step located at the bottom of the domain and $5$ units to the left bank. The inflow boundary condition $\boldsymbol{u}_{in}=(y(10-y)/25,0)$ is prescribed at $x=0$. At the outflow ($x=40$) we apply the ``do nothing" boundary condition. For other boundary, including the step, the no-slip condition is strongly enforced. The initial velocity is simply set as $\boldsymbol{u}^{0}=(y(10-y)/25,0)$ and the external force is taken to be zero all the time.

The boundary condition and the coarsest mesh (``Mesh 1") are shown in \cref{stepmesh1}. We use the $P_{2}^{bubble}/P_{1}^{disc}$ pair which provides 22738 degrees of freedom on Mesh 1. This partition can not resolve all the scale for this problem, which is common in practice. In the papers \cite{layton2008,Rebholz2017}, some cases of $\nu=1/600$ were studied. We tried this viscosity and found that there was almost no difference between CONV and modified CONV. Thus, we test a higher Reynolds number in this paper by setting $\nu=0.001$. The BDF2 time stepping method is used till an end time $T=80$ with $\Delta t=0.01$. We study the performance of CONV, modified CONV and SKEW. In addition, the second order Scott-Vogelius (SV2) element \cite{Arnold1992,john_divergence_2017} with CONV is also studied on the barycenteric refinement of Mesh 1, which is a type of exactly divergence-free element. Thus, for SV2 all formulations are equivalent.

In this problem, the eddies will form behind the step and then detach the step and flow downstream. To give a ``reference solution", we compute the schemes built on $P_{2}^{bubble}/P_{1}^{disc}$ with SKEW and grad-div stabilization (the stabilization parameter is taken to be $0.1$) on Mesh 2 (a regular refinement of Mesh 1), and SV2 with CONV on the barycentric refinement of Mesh 2. They give very similar results, which are sufficient to be as a reference for the methods on Mesh 1. The corresponding results are shown in \cref{stepref60} and \cref{stepref80}. Contours of speeds and plots of streamlines of the solutions on Mesh 1 could be found in \cref{step60} and \cref{step80} for $t=60$ and $t=80$, respectively.

At $t=60$, one could see that the solution by modified CONV is the closest to the reference solution. At $t=80$, all the methods fail to capture the furthest eddy near $x=28$ except modified CONV, although there is a slight deviation in the position.
\begin{figure}[htbp]
\centering
\includegraphics[width=1.0\textwidth]{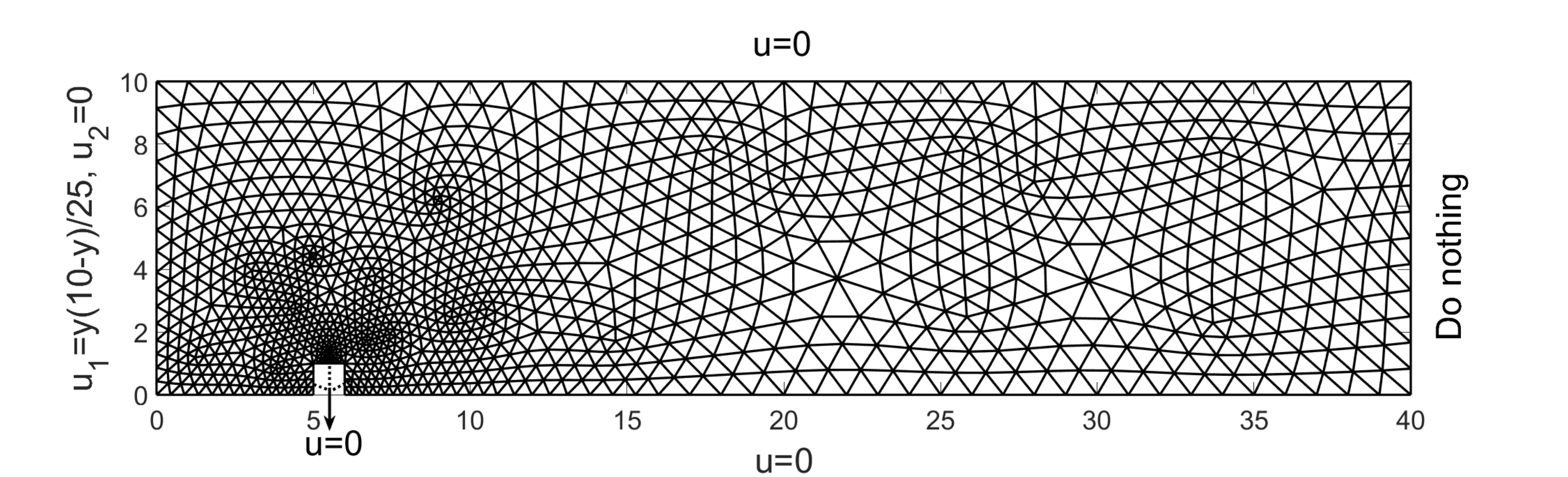}
\caption{Example 3. The boundary condition and the coarsest mesh ``Mesh 1".}
\label{stepmesh1}
\end{figure}
\begin{figure}[htbp]
\centering
\includegraphics[width=0.48\textwidth,height=4cm]{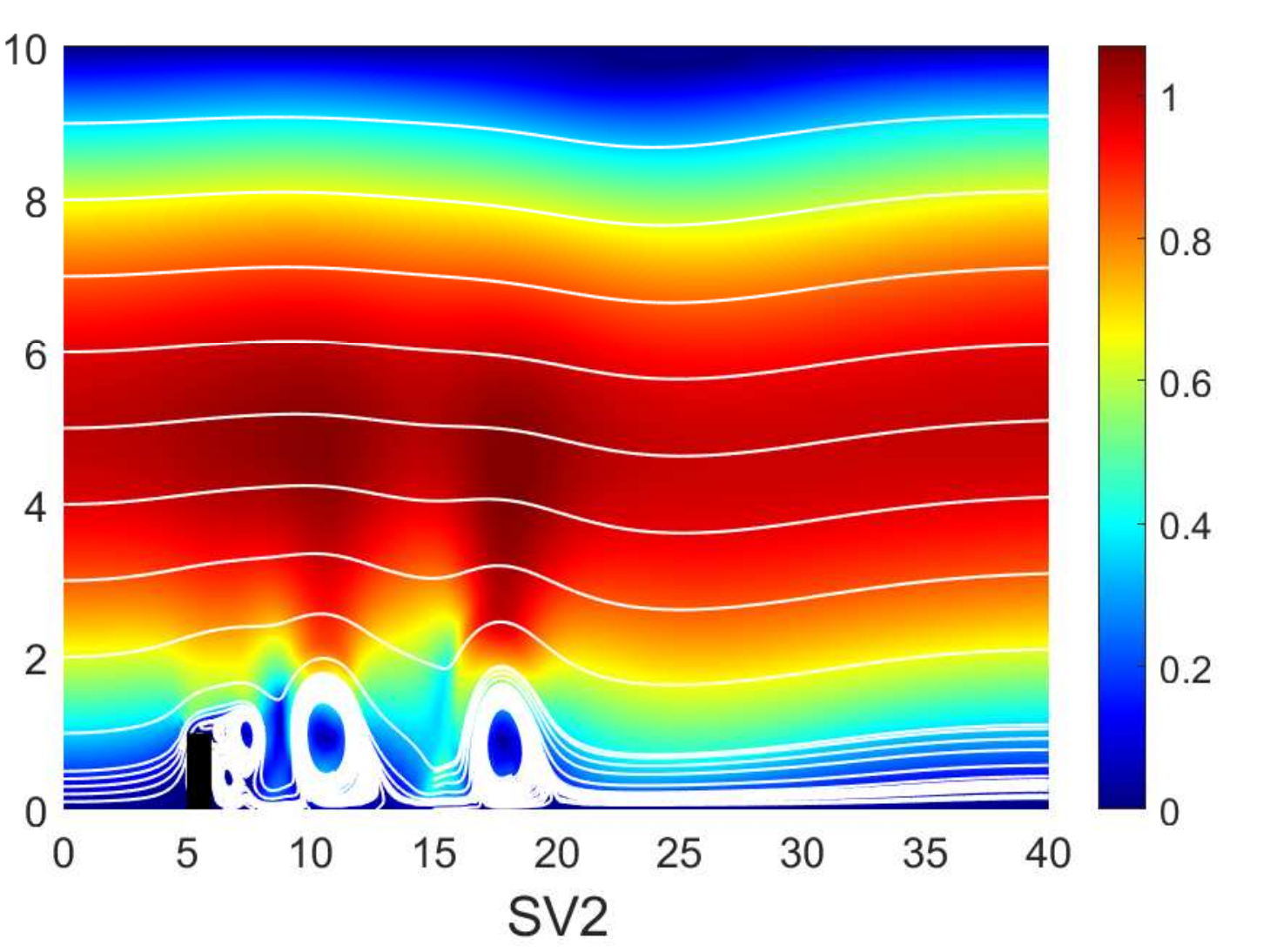}
\includegraphics[width=0.48\textwidth,height=4cm]{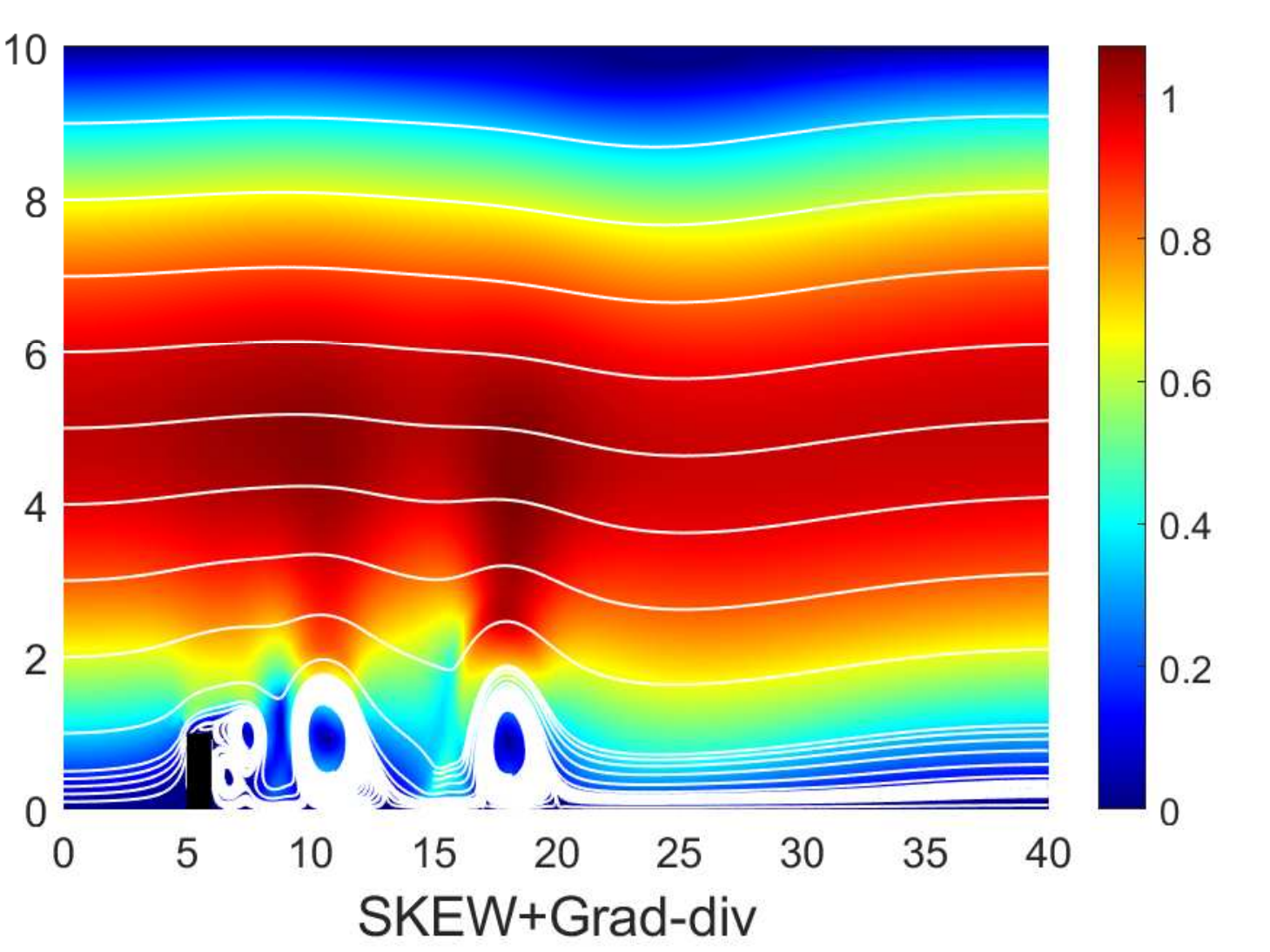}
\caption{Example 3. $\nu=0.001$. Contours of speeds and plots of streamlines of the ``truth" solutions at $t=60$. Left: SV2 on barycenteric refinement of Mesh 2; right: $P_{2}^{bubble}/P_{1}^{disc}$ on Mesh 2.}
\label{stepref60}
\end{figure}
\begin{figure}[htbp]
\centering
\includegraphics[width=0.48\textwidth,height=4cm]{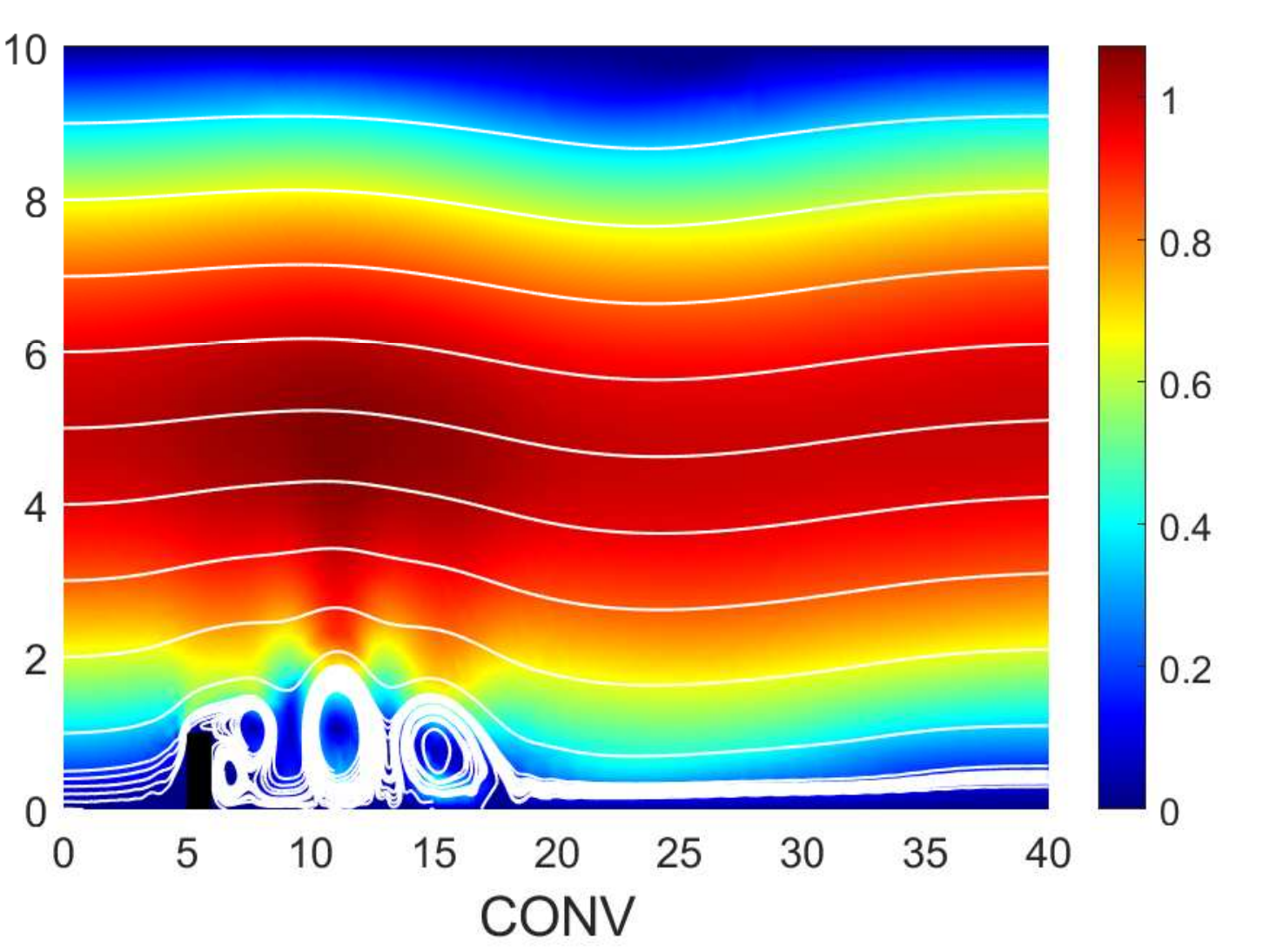}
\includegraphics[width=0.48\textwidth,height=4cm]{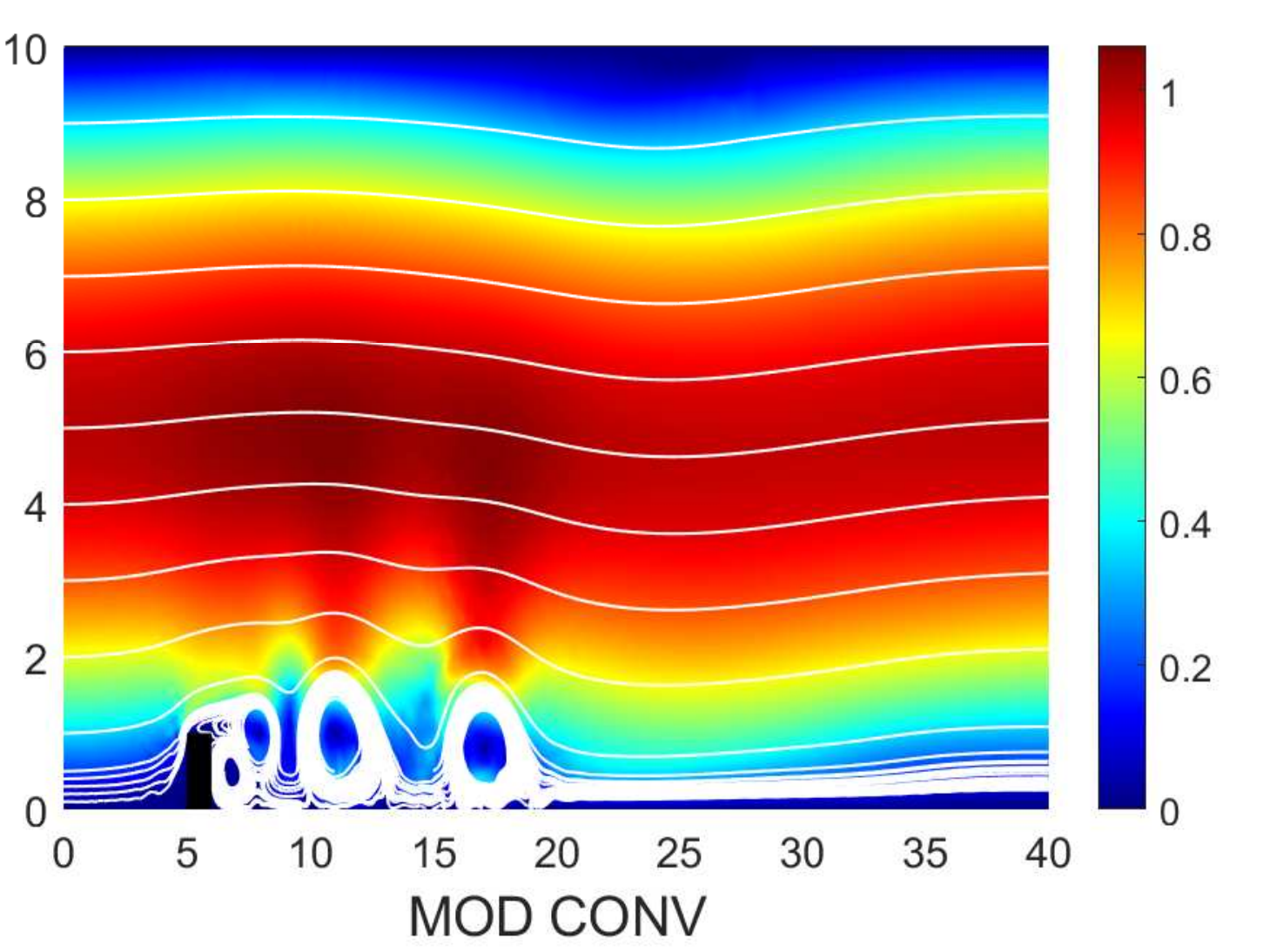}
\includegraphics[width=0.48\textwidth,height=4cm]{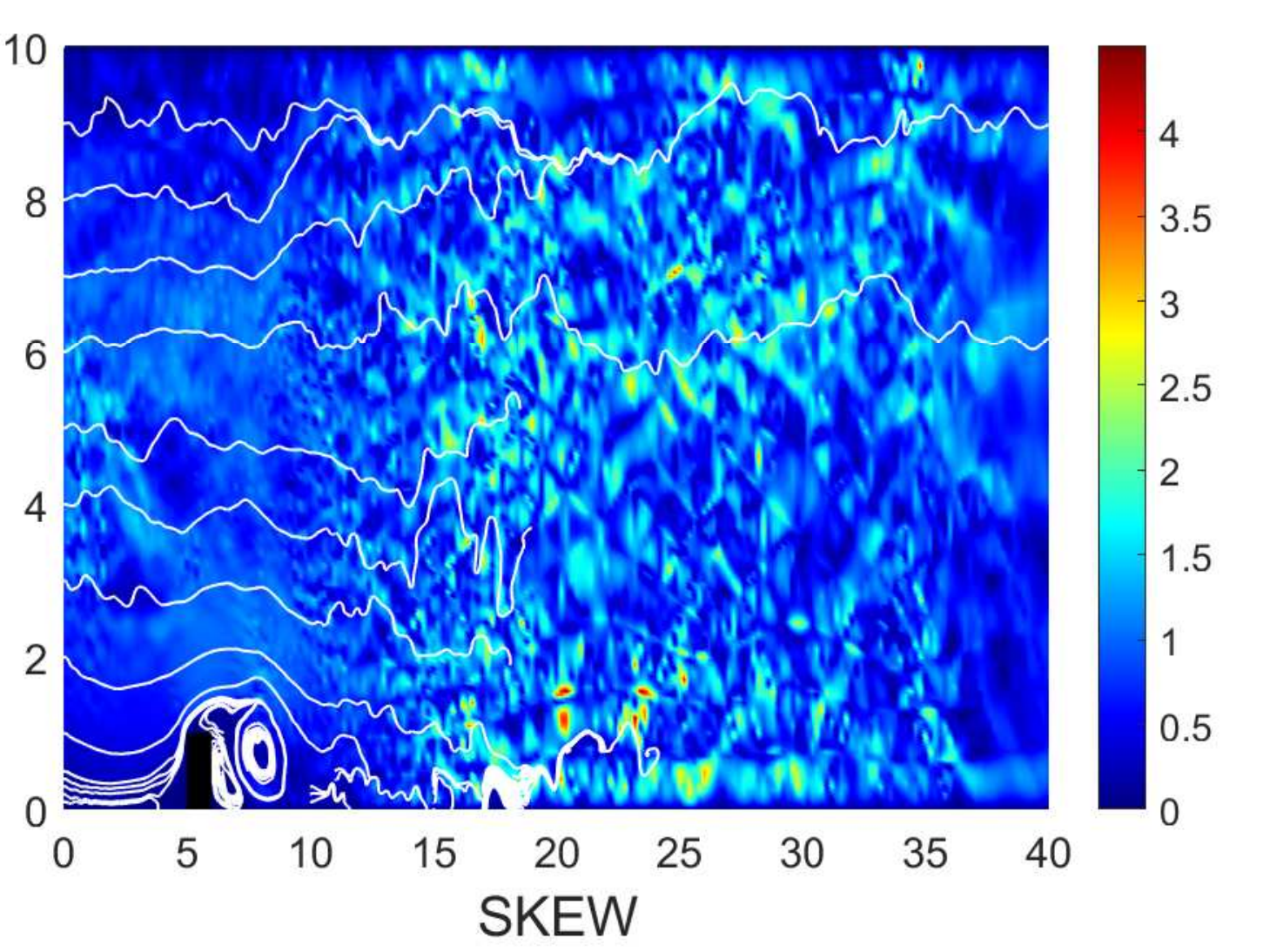}
\includegraphics[width=0.48\textwidth,height=4cm]{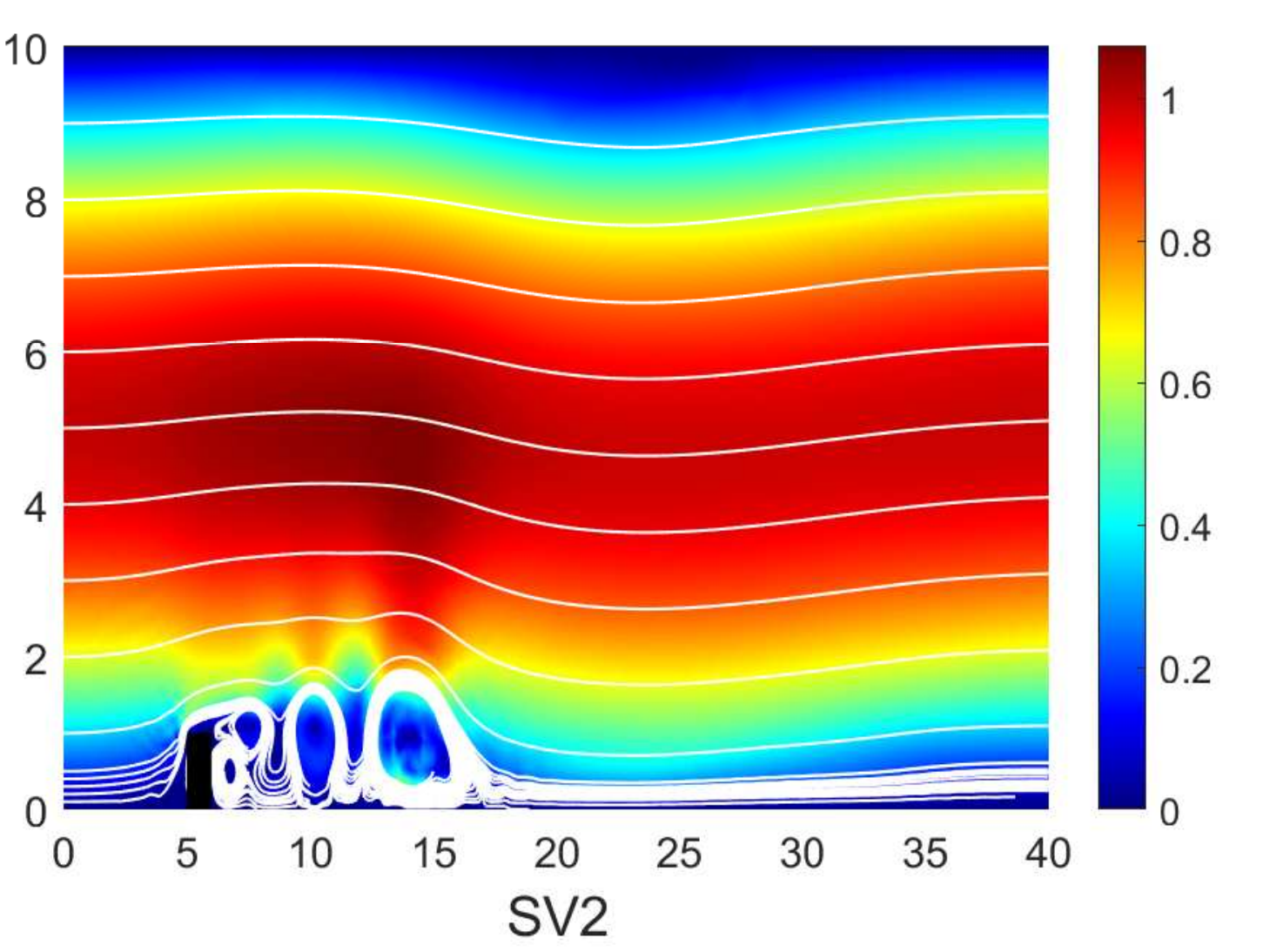}
\caption{Example 3. $\nu=0.001$. Contours of speeds and plots of streamlines of the solutions on Mesh 1 by $P_{2}^{bubble}/P_{1}^{disc}$ at $t=60$.}
\label{step60}
\end{figure}
\begin{figure}[htbp]
\centering
\includegraphics[width=0.48\textwidth,height=4cm]{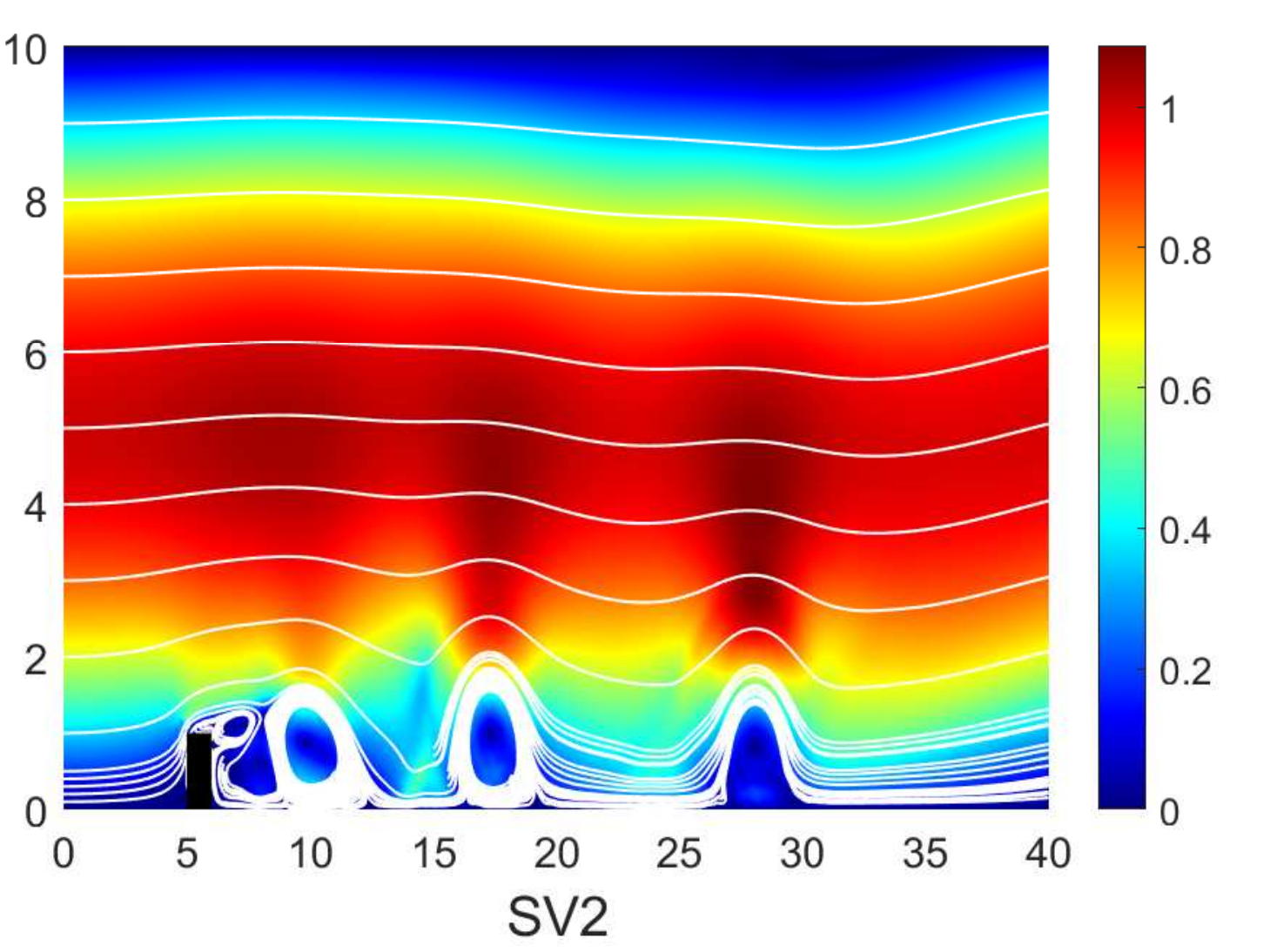}
\includegraphics[width=0.48\textwidth,height=4cm]{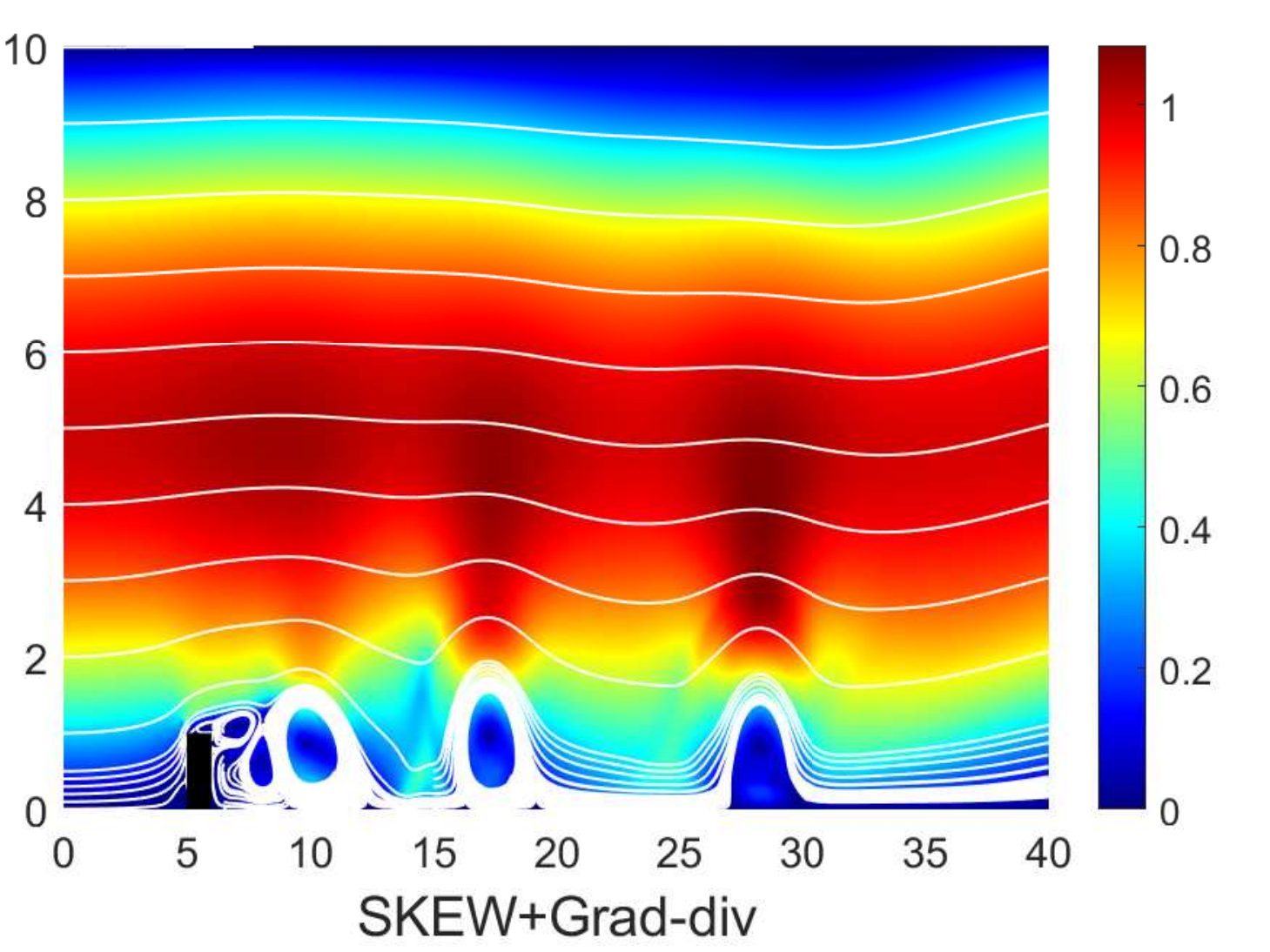}
\caption{Example 3. $\nu=0.001$. Contours of speeds and plots of streamlines of the ``truth" solutions at $t=80$. Left: SV2 on barycenteric refinement of Mesh 2; right: $P_{2}^{bubble}/P_{1}^{disc}$ on Mesh 2.}
\label{stepref80}
\end{figure}
\begin{figure}[htbp]
\centering
\includegraphics[width=0.48\textwidth,height=4cm]{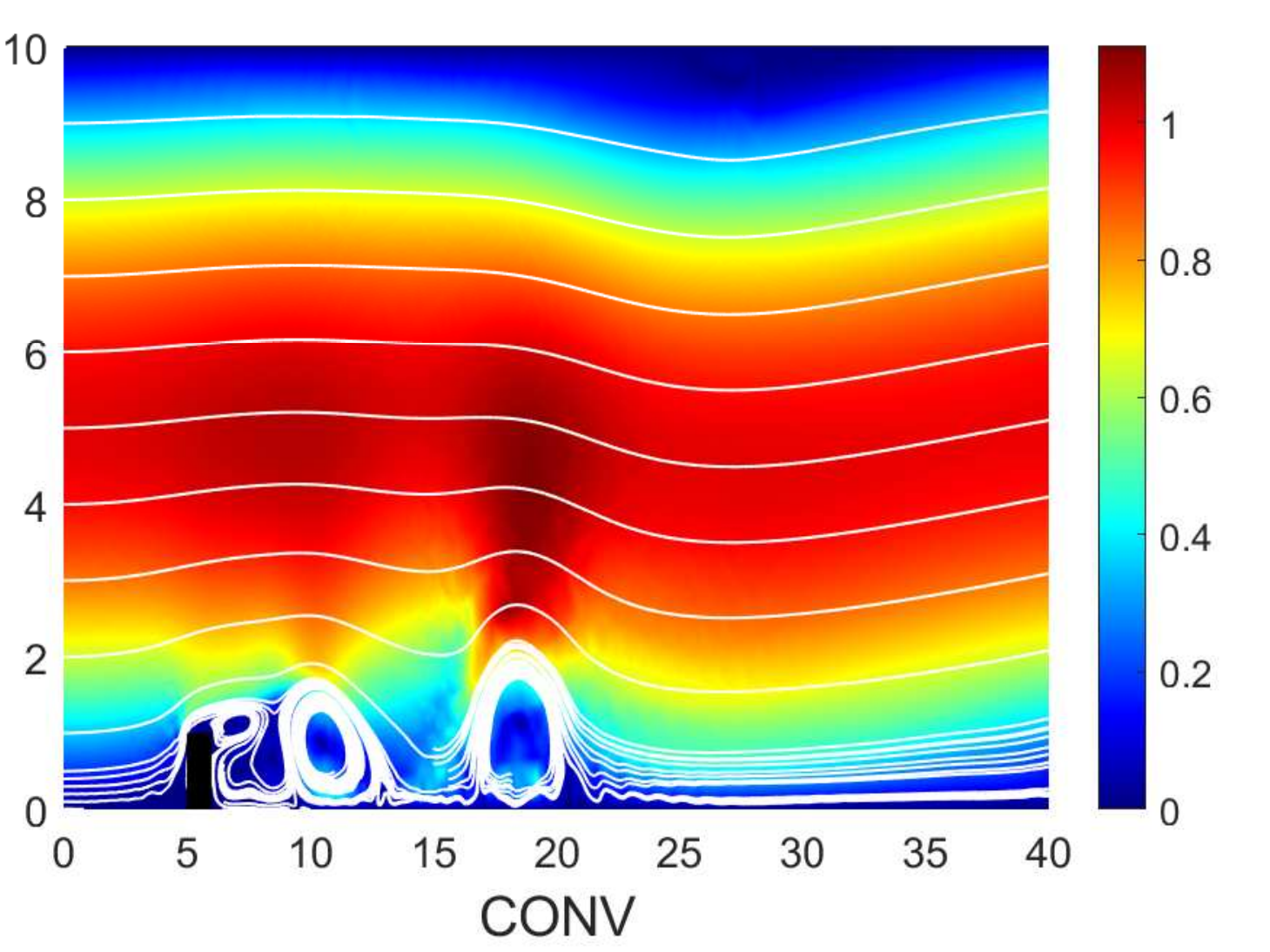}
\includegraphics[width=0.48\textwidth,height=4cm]{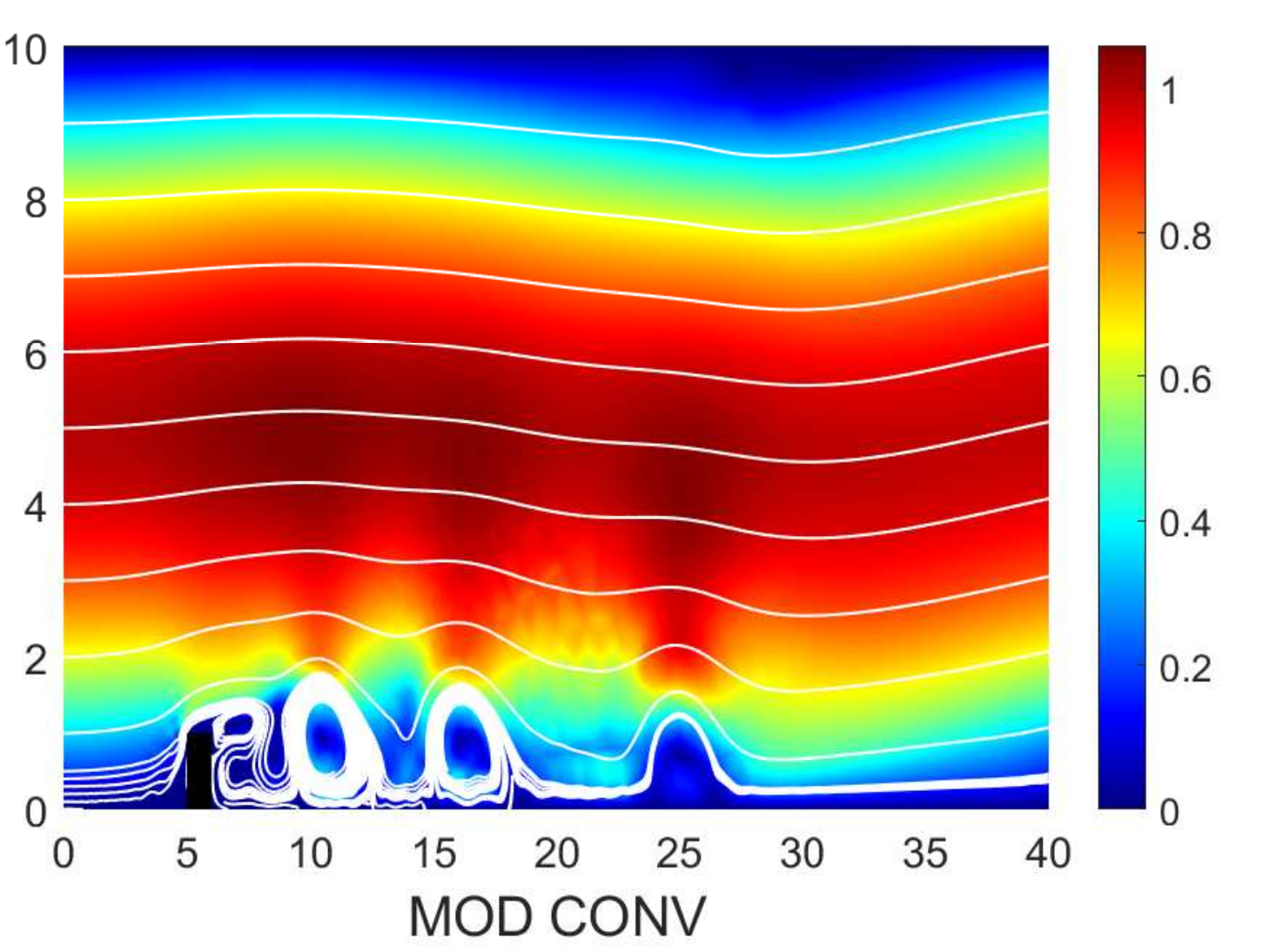}
\includegraphics[width=0.48\textwidth,height=4cm]{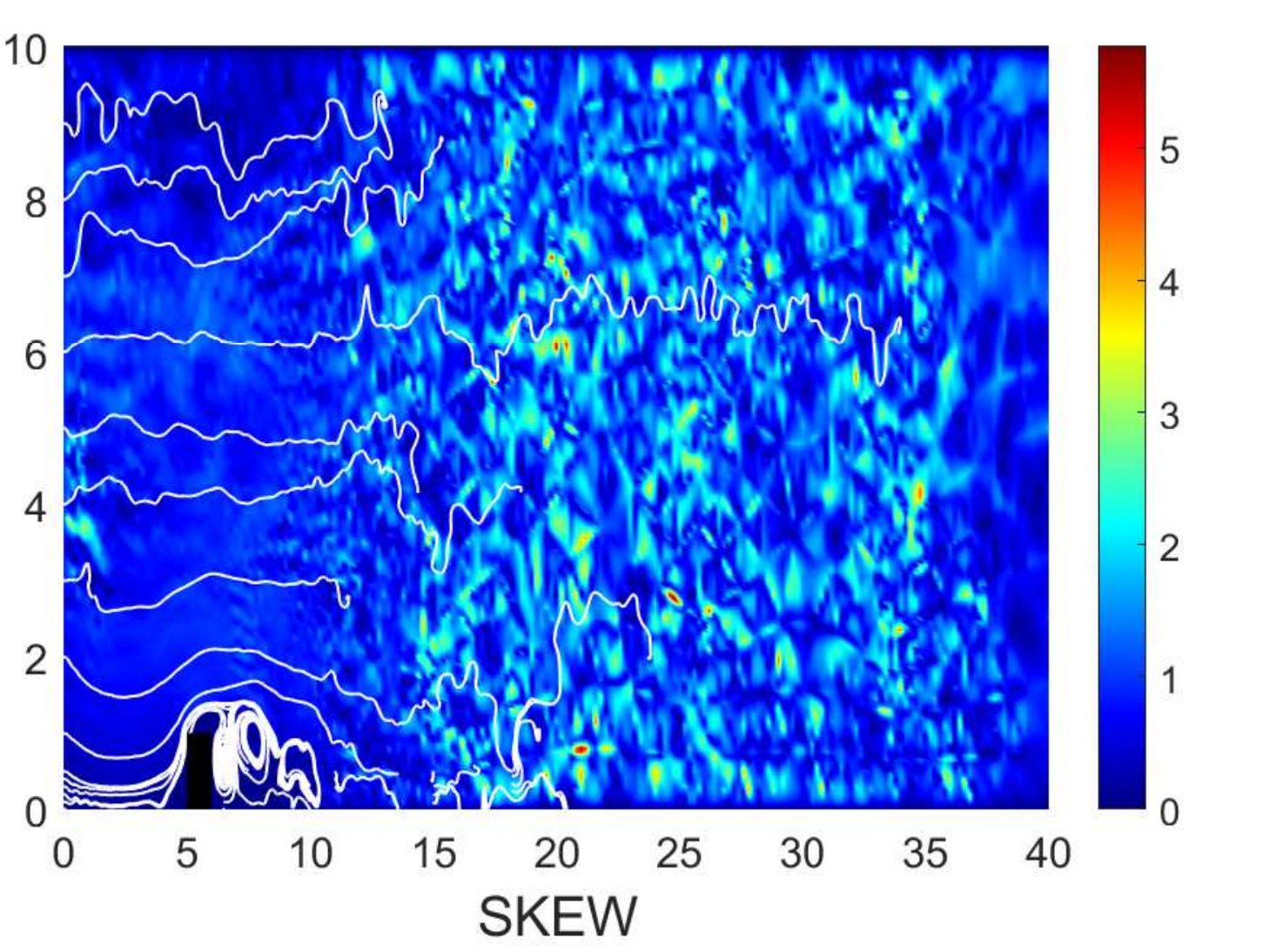}
\includegraphics[width=0.48\textwidth,height=4cm]{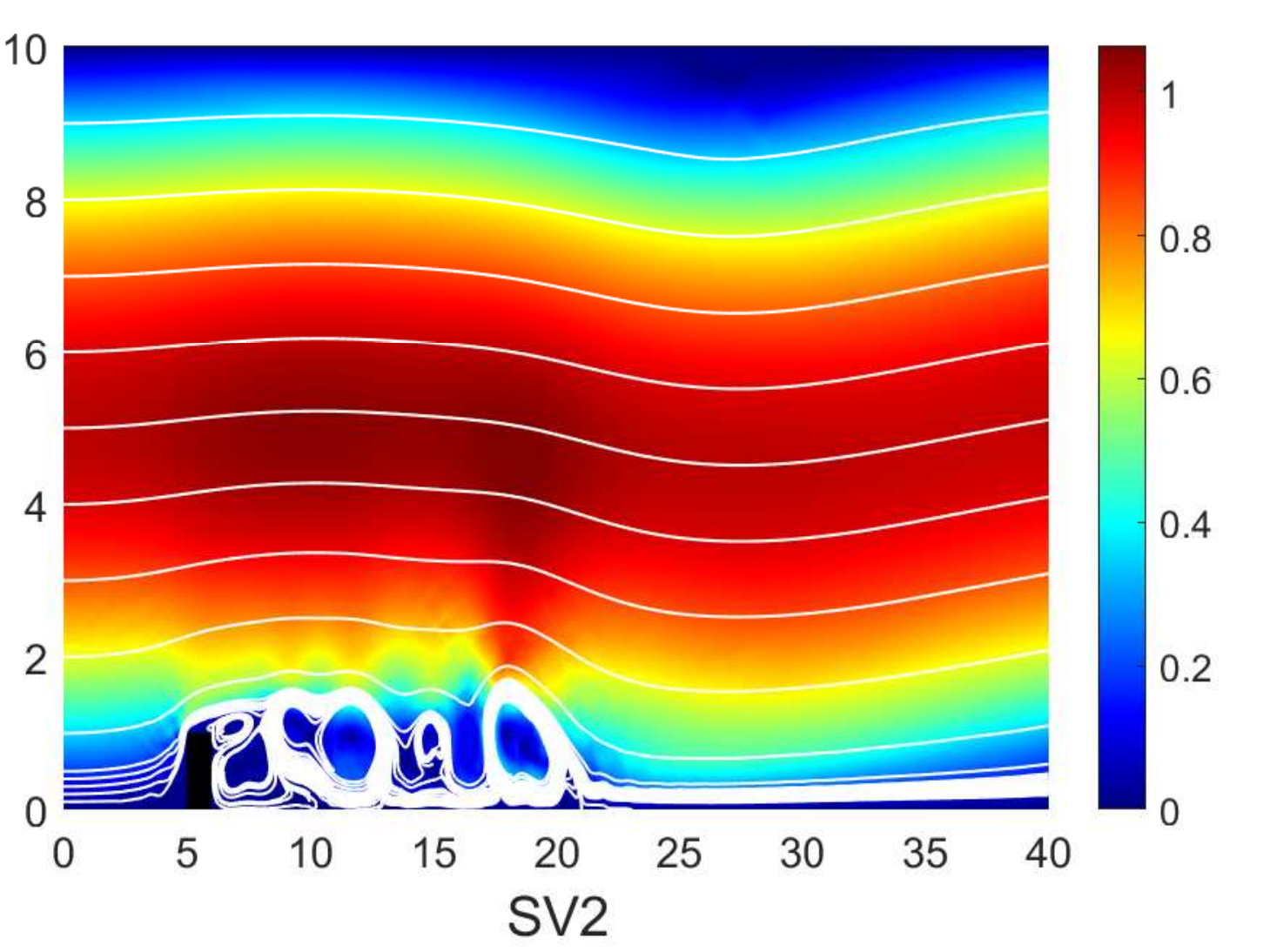}
\caption{Example 3. $\nu=0.001$. Contours of speeds and plots of streamlines of the solutions on Mesh 1 by $P_{2}^{bubble}/P_{1}^{disc}$ at $t=80$.}
\label{step80}
\end{figure}

\label{sec:5}
\bibliographystyle{amsplain}
\bibliography{references}
\end{document}